\theoremstyle{plain}
\newtheorem{Theorem}{Theorem}
\newtheorem{theorem}[Theorem]{Theorem}
\newtheorem{corollary}[Theorem]{Corollary}
\newtheorem{lemma}[Theorem]{Lemma}
\theoremstyle{definition}
\newtheorem{Definition}[Theorem]{Definition}
\newtheorem{example}[Theorem]{Example}
\newtheorem{definition}[Theorem]{Definition}
\newtheorem{remark}[Theorem]{Remark}
\theoremstyle{remark}
\newcommand{\N}{\mathbb{N}}     
\newcommand{\R}{\mathbb{R}}     
\def\r{\R}
\newcommand{\calC}{\mathscr{C}}
\newcommand{\calK}{\mathscr{K}}
\newcommand{\calO}{\mathscr{O}}
\DeclareMathOperator{\cl}{cl}				
\DeclareMathOperator{\supp}{supp}			
\DeclareMathOperator{\rg}{rg}				
\newcommand{\ox}{\calO(X)}
\newcommand{\cx}{\calC(X)}
\newcommand{\kx}{\calK(X)}
\newcommand{\bcx}{\kx}
\newcommand{\fvix}{C_0(X)}  				
\newcommand{\fvixP}{C_0^+(X)}                          
\newcommand{\fcsx}{C_c(X)}    			
\newcommand{\fcsxP}{C_c^+(X)}                       
\def\K{{{\mathcal{K}}}}
\renewcommand{\O}{\emptyset}
\def\sm{\setminus}
\def\cl{\overline}
\def\se{\subseteq}
\def\sc{\sqcup}
\def\bsc{\bigsqcup}
\def\rf{\mathcal{R}}
\def\rg{\rho_{g}}
\def\eps{\epsilon}
\def\la1{\lambda_1}
\def\la2{\lambda_2}
\def\la0{\lambda_{0}}
\def\la{\lambda}
\def\mugt{{\widehat{\mu_g}}}
\begin{document}
\title{Integration with respect to deficient topological measures on locally compact spaces}
\author{S. V. Butler, UCSB } 
\address{Department of Mathematics,
University of California Santa Barbara, 
552 University Rd., Isla Vista, CA 93117, USA } 
\email{svbutler@ucsb.edu }  
\date{February 21, 2019}
\keywords{deficient topological measure, signed deficient topological measure, non-linear functionals, quasi-integration, absolute continuity, Lipschitz continuous functional}
\subjclass[2010]{28A25, 28C05,  28C15, 46T99, 46F99}
\maketitle

\begin{abstract}
Topological measures and deficient topological measures generalize Borel measures and correspond to certain non-linear functionals. 
We study integration with respect to deficient topological measures on locally compact spaces. 
Such an integration over sets yields a new deficient topological measure 
if we integrate a nonnegative vanishing at infinity function; and it produces a signed deficient 
topological measure if we use a continuous function on a compact space.  
We present many properties of these resulting deficient topological measures and of signed 
deficient topological measures. In particular, they are  absolutely continuous with respect to 
the original deficient topological measure and Lipschitz continuous.  Deficient topological measures obtained by integration over sets 
can also be obtained from non-linear functionals.
We show that for a deficient topological measure $ \mu$ that assumes finitely many values, there is a function $ f $ such that 
$\int_X f \, d \mu = 0$, but $\int_X (-f )\, d \mu  \neq 0$. We present different criteria for $\int_X f \, d \mu = 0$.
We also prove some convergence results, including a Monotone convergence theorem. 
\end{abstract}

\section{Introduction}

Topological measures (initially called quasi-measures) were introduced by 
J. F. Aarnes in \cite{Aarnes:TheFirstPaper}, \cite{Aarnes:Pure}, and \cite{Aarnes:ConstructionPaper}.
These generalizations of measures are defined on open and closed subsets of a topological space. Despite the lack of an algebraic structure on the domain, 
absence of subadditivity, and unavailability of many standard techniques of measure theory and functional analysis, topological measures
still possess many features of regular Borel measures. 
Via integration, topological measures correspond to functionals that are not linear, but are linear on singly 
generated subalgebras. 
Topological measures and corresponding quasi-linear functionals are connected to the problem 
of linearity of the expectational functional on the algebra of observables in quantum mechanics. Initial papers by Aarnes were followed 
by many papers devoted to the subject.
Applications of topological measures and corresponding non-linear functionals to symplectic topology 
have been studied in numerous papers beginning with \cite{EntovPolterovich} (which has been cited over 100 times), 
and in a monograph \cite{PoltRosenBook}. 

Topological measures are a subclass of deficient topological measures, which also correspond via integration to certain non-linear functionals.
(See, for example,  \cite{OrjanAlf:CostrPropQlf}, \cite{Svistula:Signed}, \cite{Svistula:DTM}, \cite{Butler:DTMLC},  and \cite{Butler:ReprDTM} for more information).
Deficient topological measures are not only interesting by themselves, but also provide an essential framework for studying topological measures and 
various non-linear functionals. This provided a motivation for our study of integration 
with respect to deficient topological measures on locally compact spaces, 
especially given the fact that the vast majority of results so far has been proven for compact spaces 
(which  has impeded the development of the area and its applications).  
We demonstrate that integration with respect to a deficient topological measure sometimes gives the same results as integration with respect to a measure 
would give; in other instances, the results are very different. Some of our results are new, and some are generalizations of known results about 
deficient topological measure on compact spaces to signed deficient topological measures on compact spaces and /or deficient topological measures on 
locally compact spaces. (See \cite{Butler:STMLC} and \cite{Butler:Decomp}  for more information about signed deficient topological measures.)

We begin (Section \ref{IntlSetRestr}) with the study of integration with respect to a deficient topological measure over a set. 
It is done by using "restricted" deficient topological measures.
Integration over sets yields a new deficient topological measure if we integrate a nonnegative vanishing at infinity function; and it produces a signed deficient 
topological measure if we use a continuous function on a compact space.  
We present many properties of these resulting deficient topological measures and of signed 
deficient topological measures. In particular, these resulting (signed) deficient topological measures are absolutely continuous with respect to 
the original deficient topological measure, and they are Lipschitz continuous.
In Section \ref{IntSetFnl} we show that the deficient topological measures obtained in Section \ref{IntlSetRestr} from finite deficient topological measures 
by integrating over sets can also be obtained from non-linear functionals. 
We present more properties of such deficient topological measures.   
In Section \ref{ZeroInt} we show that for a deficient topological measure $ \mu$ that assumes finitely many values, there is $ f \ge 0$ such that 
$\int_X f \, d \mu = 0$, but $\int_X (-f )\, d \mu  \neq 0$. Using integration over zero and cozero sets, we present different criteria for $\int_X f \, d \mu = 0$.
We conclude the paper (Section \ref{MCT}) with some convergence results, including a Monotone convergence theorem. 
 
In this paper $X$ is a locally compact, connected space. 

By $C(X)$ we denote the set of all real-valued continuous functions on $X$ with the uniform norm, 
by $C_b(X)$ the set of bounded continuous functions on $X$,   
by $C_0(X)$ the set of continuous functions on $X$ vanishing at infinity,  and
by $C_c(X)$ the set of continuous functions with compact support. 
By $C_0^+(X)$ and  $C_c^+(X)$  we denote the collection of all nonnegative functions from $C_0(X)$ and $C_c(X)$, respectively.
$Z(f)$ and $Coz(f)$ stand for the zero and the cozero sets of the function $f$, i.e. 
$ Z(f) = \{ x \in X: \, f(x) = 0 \}, \ \ Coz(f) = X \sm Z(f).$

When we consider maps  into $ [-\infty, \infty]$ we assume that any such map 
attains at most one of $ \infty, - \infty$, and is not identically $\infty$ or $ - \infty$. 

We denote by $\overline E$ the closure of a set $E$, and by $ \bigsqcup$ a union of disjoint sets.
We denote by $1$ the constant function $1(x) =1$,  by $id$ the identity function $id(x) = x$, 
and by $1_K$ the characteristic function of a set $K$. By $ supp \,  f $ we mean $ \overline{ \{x: f(x) \neq 0 \} }$.

Several collections of sets are used often.   They include:
$\mathscr{O}(X)$;
$\mathscr{C}(X)$; and
$\mathscr{K}(X)$-- 
 the collection of open subsets of $X$;  the collection of closed subsets of   $X $;
and the collection of compact subsets of   $X $, respectively.


\begin{definition} \label{MDe2}
Let $X$ be a  topological space and $\nu$ be a set function on a family of subsets of $X$ that contains $\mathscr{O}(X) \cup \mathscr{C}(X)$. 
We say that 
\begin{itemize}
\item
$\nu$ is compact-finite if $ |\nu(K) | < \infty$ for any $ K \in \mathscr{K}(X)$;
\item
$\nu$ is simple if it only assumes  values $0$ and $1$;
\item
a nonnegative set-function $ \nu$ is finite if $ \nu(X) < \infty$.
\end{itemize}
\end{definition}

\begin{definition}
A Radon measure  $m$  on $X$ is  a Borel measure that is finite on all compact sets,
outer regular on all Borel sets, and inner regular on all open sets, i.e.
$ m(E) = \inf \{ m(U): \ \ E \subseteq U, \ U \text{  open  } \} $ for every Borel set $E$, and 
$m(U) = \sup \{  m(K): \ \ K \subseteq U, \  K  \text{  compact  } \}$ for every open set $U$. 
\end{definition}

Recall the following fact (see, for example,~\cite[Chapter XI, 6.2]{Dugundji}):
\begin{lemma} \label{easyLeLC}
Let $K \subseteq U, \ K \in \mathscr{K}(X),  \ U \in \mathscr{O}(X)$ in a locally compact space $X$.
Then there exists a set  $V \in \mathscr{O}(X)$ with compact closure such that
$ K \subseteq V \subseteq \overline V \subseteq U. $
\end{lemma}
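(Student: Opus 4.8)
The plan is to prove the statement by first establishing the conclusion \emph{locally}, at each individual point of $K$, and then globalizing it via the compactness of $K$. The key structural input is that in a locally compact Hausdorff space (the standing assumption in this setting) the compact neighborhoods of a point form a neighborhood basis; this is precisely what allows one to shrink an arbitrary neighborhood down to one whose closure is simultaneously compact and contained in $U$.

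First I would fix a point $x \in K$. Since $x \in U$ and $X$ is locally compact, I can choose an open neighborhood $O_x$ of $x$ with $\overline{O_x}$ compact. Intersecting with $U$ yields the open neighborhood $O_x \cap U$ of $x$, and invoking the compact-neighborhood-basis property I obtain a compact neighborhood $N_x$ of $x$ with $N_x \subseteq O_x \cap U$. Setting $V_x = \inte N_x$, I get $x \in V_x$ and $\overline{V_x} \subseteq N_x$ (here $N_x$ is closed, being compact in a Hausdorff space), so that $\overline{V_x}$ is compact and $\overline{V_x} \subseteq U$.

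Next, the family $\{ V_x : x \in K \}$ is an open cover of the compact set $K$, so it admits a finite subcover $V_{x_1}, \dots, V_{x_n}$. I would then set $V = \bigcup_{i=1}^n V_{x_i}$. This $V$ is open and contains $K$; its closure satisfies $\overline{V} = \bigcup_{i=1}^n \overline{V_{x_i}}$, a finite union of compact sets and hence compact, and it is contained in $U$ since each $\overline{V_{x_i}} \subseteq U$. Thus $K \subseteq V \subseteq \overline{V} \subseteq U$ with $\overline{V}$ compact, as required.

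The only genuinely delicate point is the local step --- producing, for each $x$, a neighborhood with compact closure sitting inside $U$ --- which rests on the fact that local compactness together with the Hausdorff property yields a basis of compact neighborhoods. Everything afterward (passing to a finite subcover, and using that a finite union of closures equals the closure of the union) is routine. Since this is a classical fact, one can simply cite \cite[Chapter XI, 6.2]{Dugundji} as the authors do, but the argument above is the one underlying that reference.
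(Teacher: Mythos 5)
Your proposal is correct. Note that the paper itself offers no proof of this lemma at all --- it is recalled as a known fact with the citation to Dugundji --- so there is nothing to compare against except that citation; the argument you give (shrink to a compact neighborhood inside $U$ at each point of $K$, pass to a finite subcover, and use that a finite union of closures is the closure of the union) is precisely the standard proof underlying the cited result. The one hypothesis you lean on, namely that local compactness plus the Hausdorff property gives each point a neighborhood basis of compact neighborhoods, is indeed the crux, and you are right to flag it: the paper never states the Hausdorff assumption explicitly, but it is built into Dugundji's definition of local compactness and is the standing convention in the topological-measure literature, so your proof is valid in the paper's setting.
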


\begin{Definition}\label{DTM}
A  deficient topological measure on a locally compact space $X$ is a set function
$\nu:  \cx \cup \ox \longrightarrow [0, \infty]$ 
which is finitely additive on compact sets, inner compact regular, and 
outer regular, i.e. :
\begin{enumerate}[label=(DTM\arabic*),ref=(DTM\arabic*)]
\item \label{DTM1}
if $C \cap K = \O, \ C,K \in \kx$ then $\nu(C \sc K) = \nu(C) + \nu(K)$; 
\item \label {DTM2} 
$ \nu(U) = \sup\{ \nu(C) : \ C \se U, \ C \in \kx \} $
 for $U\in\ox$;
\item \label{DTM3} 
$ \nu(F) = \inf\{ \nu(U) : \ F \se U, \ U \in \ox \} $  for  $F \in \cx$.
\end{enumerate}
\end{Definition} 

\noindent
For a closed set $F$, $ \nu(F) = \infty$ iff $ \nu(U) = \infty$ for every open set $U$ containing $F$.

\begin{Definition}\label{TMLC}
A topological measure on $X$ is a set function
$\mu:  \cx \cup \ox  \longrightarrow  [0,\infty]$ satisfying the following conditions:
\begin{enumerate}[label=(TM\arabic*),ref=(TM\arabic*)]
\item \label{TM1} 
if $A,B, A \sc B \in \kx \cup \ox $ then
$
\mu(A\sqcup B)=\mu(A)+\mu(B);
$
\item \label{TM2}  
$
\mu(U)=\sup\{\mu(K):K \in \bcx, \  K \se U\}
$ for $U\in\ox$;
\item \label{TM3}
$
\mu(F)=\inf\{\mu(U):U \in \ox, \ F \se U\}
$ for  $F \in \cx$.
\end{enumerate}
\end{Definition} 

The following two theorems from \cite[Section 4]{Butler:DTMLC} give criteria for a deficient topological measure to be a topological measure or a measure.

\begin{theorem} \label{DTMtoTM}
\begin{enumerate}[label=(\Roman*),ref=(\Roman*)]
\item
Let $X$ be compact, and $\nu$ a deficient topological measure. The following are equivalent:
\begin{enumerate}
\item[(a)]
$\nu$ is a topological measure
\item[(b)]
$\nu(X) = \nu(C)  + \nu(X \sm C), \ \ \ C \in \cx$ 
\item[(c)]
$\nu(X) \le \nu(C)  + \nu(X \sm C), \ \ \ C \in \cx$  
\end{enumerate}
\item
Let $X$ be locally compact, and $\nu$ a deficient topological measure. 
The following are equivalent:
\begin{enumerate}
\item[(a)]
$\nu$ is a topological measure
\item[(b)]
$\nu(U) = \nu(C)  + \nu(U \sm C), \ \ \ C \in \kx, \ \ \ U \in \ox$ 
\item[(c)]
$\nu(U) \le \nu(C)  + \nu(U \sm C), \ \ \ C \in \kx, \ \ \ U \in \ox$
\end{enumerate}
\end{enumerate}
\end{theorem}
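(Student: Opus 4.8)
The plan is to treat part (I) as the compact specialization of part (II) and to reduce both to verifying the additivity axiom \ref{TM1}, since a topological measure is always a deficient topological measure and axioms \ref{TM2}, \ref{TM3} coincide verbatim with \ref{DTM2}, \ref{DTM3}. First I would record that for a deficient topological measure the inequality $\nu(C) + \nu(U \setminus C) \le \nu(U)$ (with $C \in \mathscr{K}(X)$, $C \subseteq U \in \mathscr{O}(X)$) always holds: by \ref{DTM2} write $\nu(U \setminus C) = \sup\{\nu(K): K \subseteq U \setminus C,\ K \in \mathscr{K}(X)\}$, and for each such $K$ the sets $C, K$ are disjoint compacta inside $U$, so $\nu(C) + \nu(K) = \nu(C \sqcup K) \le \nu(U)$ by \ref{DTM1} together with \ref{DTM2} (which gives $\nu(K') \le \nu(U')$ for compact $K' \subseteq U' \in \mathscr{O}(X)$); taking the supremum over $K$ gives the claim, and the same argument with $U = X$ handles the compact case. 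Consequently (b) and (c) are equivalent in each part, and it suffices to prove (a) $\Leftrightarrow$ (b). The implication (a) $\Rightarrow$ (b) is immediate: in (II) apply \ref{TM1} to $U = C \sqcup (U \setminus C)$ with $C \in \mathscr{K}(X)$, $U \setminus C \in \mathscr{O}(X)$; in (I) apply it to $X = C \sqcup (X \setminus C)$, using that $X$ compact forces $C, X \in \mathscr{K}(X)$ and $X \setminus C \in \mathscr{O}(X)$.

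The substance is (b) $\Rightarrow$ (a), i.e. deriving the full additivity \ref{TM1} from the single complementary additivity in (b). I would verify \ref{TM1} for disjoint $A, B$ with $A, B, A \sqcup B \in \mathscr{K}(X) \cup \mathscr{O}(X)$ by cases on the topological type of the three sets. Two preliminary observations streamline this. First, additivity on two disjoint \emph{open} sets is automatic: if $A, B \in \mathscr{O}(X)$ are disjoint and $K \subseteq A \sqcup B$ is compact, then $K \cap A = K \setminus B$ and $K \cap B = K \setminus A$ are disjoint compacta, so $\nu(K) = \nu(K \cap A) + \nu(K \cap B) \le \nu(A) + \nu(B)$ by \ref{DTM1} and \ref{DTM2}; combined with the reverse (superadditive) inequality and \ref{DTM2} this gives $\nu(A \sqcup B) = \nu(A) + \nu(B)$. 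Second, connectedness of $X$ kills degenerate configurations; for instance, two disjoint nonempty open sets cannot have compact union, since such a union being closed would make each piece clopen, hence $\emptyset$ or $X$. After reducing mixed cases by symmetry to $A \in \mathscr{K}(X)$, $B \in \mathscr{O}(X)$, the instance with $A \sqcup B \in \mathscr{O}(X)$ is exactly hypothesis (b), the instance with $A, B \in \mathscr{K}(X)$ is \ref{DTM1}, and the only remaining instance is $A \sqcup B$ compact, treated next.

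In that remaining case, with $A \in \mathscr{K}(X)$, $B \in \mathscr{O}(X)$ and $S := A \sqcup B$ compact, I would pass to the open setting by outer regularity. Using Lemma \ref{easyLeLC} choose $U \in \mathscr{O}(X)$ with $S \subseteq U$ and $\overline U$ compact. Applying (b) to $A \subseteq U$ and to $S \subseteq U$, and the automatic open additivity to the disjoint open sets $B$ and $U \setminus S$ (whose union is $U \setminus A$), yields
\[
\nu(S) + \nu(U \setminus S) = \nu(U) = \nu(A) + \nu(B) + \nu(U \setminus S).
\]
When $\nu(U \setminus S) < \infty$ one cancels to get $\nu(S) = \nu(A) + \nu(B)$. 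The main obstacle is precisely the bookkeeping of infinite values here: I must show either that some admissible $U$ gives $\nu(U \setminus S) < \infty$, or else that $\nu(S) = \nu(A) + \nu(B) = \infty$ is forced, using the already-established superadditivity $\nu(A) + \nu(B) \le \nu(S)$ together with outer regularity $\nu(S) = \inf\{\nu(U): S \subseteq U \in \mathscr{O}(X)\}$. This cancellation-with-infinities step is where I expect the real care to be needed.

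Finally, for part (I) I would deduce the condition in (II)(b) for every open $U$ of the compact space $X$ from the condition in (I)(b) at $X$ alone: for compact $K \subseteq U$ the set $C := K \cup (X \setminus U)$ is closed with $X \setminus C = U \setminus K$, and applying (I)(b) to $C$ and to $X \setminus U$, together with \ref{DTM1} for the disjoint compacta $K$ and $X \setminus U$, gives $\nu(U) = \nu(K) + \nu(U \setminus K)$ after cancelling the common term $\nu(X \setminus U)$ (with the infinite-value situation again handled separately via superadditivity and regularity). Part (II) then applies to conclude that $\nu$ is a topological measure, completing both parts.
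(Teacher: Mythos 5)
This paper never actually proves Theorem \ref{DTMtoTM} --- it is imported without proof from \cite[Section 4]{Butler:DTMLC} --- so your argument has to stand on its own. Its architecture is sound and, in the finite-value regime, complete: superadditivity of a deficient topological measure gives (b) $\Leftrightarrow$ (c); (a) $\Rightarrow$ (b) is just \ref{TM1}; additivity on two disjoint open sets is automatic (your observation that $K \sm B$ and $K \sm A$ are compact is exactly right); and checking \ref{TM1} does reduce to the two mixed cases $A \in \kx$, $B \in \ox$ with $A \sc B$ open (which is hypothesis (b)) or compact (your cancellation identity). The genuine gap is the infinite-value bookkeeping that you explicitly leave open, and its status is very different in the two parts.

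In part (II) the gap is fillable, and the missing idea is to regularize at $A$ rather than at $S = A \sc B$: if $\nu(A) = \infty$, superadditivity already forces $\nu(S) = \infty = \nu(A) + \nu(B)$; if $\nu(A) < \infty$, use \ref{DTM3} to choose an open $V \supseteq A$ with $\nu(V) < \infty$ and set $U := V \cup B$. Then $U \supseteq S$ is open, $U \sm S = V \sm S \se V$, so $\nu(U \sm S) \le \nu(V) < \infty$ by \ref{DTM2}, and cancelling this \emph{finite} term in your identity $\nu(S) + \nu(U \sm S) = \nu(A) + \nu(B) + \nu(U \sm S)$ yields \ref{TM1} (note that $\overline{U}$ compact is never needed, so Lemma \ref{easyLeLC} can be dispensed with). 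In part (I), by contrast, no bookkeeping can rescue the step you wave off, because (I)(b) $\Rightarrow$ (a) is \emph{false} once $\nu(X) = \infty$ is allowed. Take $X = [0,1]$; let $\mu(E) = 1$ iff $[0.3, 0.7] \se E$ and $\mu(E) = 0$ otherwise (the deficient topological measure of Example \ref{Zfint}, transplanted to a compact space); let $\lambda(E) = \infty$ iff $0.1 \in E$ and $\lambda(E) = 0$ otherwise; and put $\nu = \mu + \lambda$. Each of $\mu, \lambda$ is a deficient topological measure (for $\lambda$, outer regularity holds because $X \sm \{0.1\}$ is open), hence so is their sum. Every closed $C$ has $0.1 \in C$ or $0.1 \in X \sm C$, so both sides of (I)(b) equal $\infty$ and (b), (c) hold; yet for $A = [0.3,0.5]$ and $B = (0.25,0.3) \cup (0.5,0.75)$ one gets $\nu(A \sc B) = 1 \neq 0 = \nu(A) + \nu(B)$, so $\nu$ is not a topological measure. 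Thus part (I) can only be proved (by your reduction or any other) under the additional assumption $\nu(X) < \infty$; with that assumption every value of $\nu$ is finite, all your cancellations are legal, and your reduction of (I) to (II) does go through.
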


\begin{theorem} \label{subaddit}
Let $\mu$ be a deficient topological measure on a locally compact space $X$. 
The following are equivalent: 
\begin{itemize}
\item[(a)]
If $C, K$ are compact subsets of $X$, then $\mu(C \cup K ) \le \mu(C) + \mu(K)$.
\item[(b)]
If $U, V$ are open subsets of $X$,  then $\mu(U \cup V) \le \mu(U) + \mu(V)$.
\item[(c)]
$\mu$ admits a unique extension to an inner regular on open sets, outer regular Borel measure 
$m$ on the Borel $\sigma$-algebra of subsets of $X$. 
$m$ is a Radon measure iff $\mu$ is compact-finite. 
If $\mu$ is finite then $m$ is a regular Borel measure.
\end{itemize}
\end{theorem}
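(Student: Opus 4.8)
The plan is to prove the cyclic chain $(c)\Rightarrow(a)\Rightarrow(b)\Rightarrow(c)$, since the first two implications are soft regularity arguments while the third carries all the content. The implication $(c)\Rightarrow(a)$ is immediate: if $m$ is the Borel extension, then $m$ is finitely subadditive and agrees with $\mu$ on compact sets, so $\mu(C\cup K)=m(C\cup K)\le m(C)+m(K)=\mu(C)+\mu(K)$. For $(a)\Rightarrow(b)$, fix open $U,V$; by inner compact regularity \ref{DTM2} it suffices to bound $\mu(C)$ for an arbitrary compact $C\subseteq U\cup V$. The sets $C\setminus U$ and $C\setminus V$ are disjoint compacta, so (using that $X$ is locally compact Hausdorff) I would separate them by disjoint open sets and thereby write $C=C_1\cup C_2$ with $C_1\subseteq U$ and $C_2\subseteq V$ compact. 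Then $(a)$ gives $\mu(C)\le \mu(C_1)+\mu(C_2)\le \mu(U)+\mu(V)$, and taking the supremum over such $C$ yields $\mu(U\cup V)\le\mu(U)+\mu(V)$.

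The heart of the argument is $(b)\Rightarrow(c)$, which I would carry out by the Carath\'eodory method. First I would upgrade the finite subadditivity in $(b)$ to countable subadditivity on open sets: induction gives $\mu(U_1\cup\cdots\cup U_n)\le\sum_{i=1}^n\mu(U_i)$, and if $U\subseteq\bigcup_n U_n$ then any compact $C\subseteq U$ lies in a finite subcover, so $\mu(C)\le\sum_n\mu(U_n)$ and \ref{DTM2} gives $\mu(U)\le\sum_n\mu(U_n)$. Next I would define $m^*(E)=\inf\{\mu(U):E\subseteq U\in\calO(X)\}$ for every $E\subseteq X$. Monotonicity is clear, countable subadditivity follows from the open-set version by the usual $\epsilon/2^n$ argument, and $m^*(U)=\mu(U)$ for open $U$; thus $m^*$ is an outer measure. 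By outer regularity \ref{DTM3} one also has $m^*(C)=\mu(C)$ for compact $C$, so $m^*$ already agrees with $\mu$ on $\calO(X)\cup\calC(X)$.

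The crucial step is to show that every Borel set is $m^*$-measurable, for which it suffices to check that each open $G$ satisfies $\mu(U)\ge m^*(U\cap G)+m^*(U\setminus G)$ for every open $U$ with $\mu(U)<\infty$. Given such $U$ and $G$, I would approximate the open set $U\cap G$ from inside by a compact $C$ (inner regularity \ref{DTM2}), interpose by Lemma \ref{easyLeLC} an open $W$ with $C\subseteq W\subseteq\overline{W}\subseteq U\cap G$ and $\overline{W}$ compact, and then use the additivity \ref{DTM1} on the disjoint pieces $\overline{W}$ and $U\setminus\overline{W}$: approximating the open set $U\setminus\overline{W}$ from inside by compacta $D$ and applying \ref{DTM1} to $\overline{W}\sqcup D$ together with monotonicity gives $\mu(U)\ge\mu(\overline{W})+\mu(U\setminus\overline{W})\ge\mu(C)+\mu(U\setminus\overline{W})$. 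Since $\overline{W}\subseteq G$ we have $U\setminus G\subseteq U\setminus\overline{W}$, hence $m^*(U\setminus G)\le\mu(U\setminus\overline{W})$, so $\mu(U)\ge\mu(C)+m^*(U\setminus G)$; letting $\mu(C)\uparrow\mu(U\cap G)=m^*(U\cap G)$ finishes the measurability. Thus $m:=m^*|_{\calB(X)}$ is a Borel measure extending $\mu$, outer regular by construction and inner regular on open sets because $m(U)=\mu(U)=\sup\{m(C):C\subseteq U\}$.

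Finally, uniqueness follows because any outer regular Borel extension $m'$ must satisfy $m'(E)=\inf\{m'(U):E\subseteq U\}=\inf\{\mu(U):E\subseteq U\}=m^*(E)$. The measure $m$ is finite on compacta exactly when $\mu$ is compact-finite, and since $m$ is already outer regular on Borel sets and inner regular on open sets, this is precisely the condition for $m$ to be Radon; if $\mu(X)<\infty$ then $m$ is a finite Borel measure, and outer regularity together with inner regularity on open sets upgrades to full regularity by complementation. The main obstacle is the measurability step of the third paragraph: it is exactly there that the additive structure \ref{DTM1} of a deficient topological measure, inner and outer regularity \ref{DTM2}--\ref{DTM3}, and local compactness (Lemma \ref{easyLeLC}) must be combined, whereas the subadditivity hypothesis $(b)$ enters only to guarantee that $m^*$ is genuinely an outer measure.
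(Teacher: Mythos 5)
The paper itself does not prove Theorem \ref{subaddit}: it is quoted from \cite[Section 4]{Butler:DTMLC}, so there is no in-paper proof to compare yours against. Judged on its own merits, your argument is correct, and it is essentially the standard content-extension argument that results of this type rest on. The cycle $(c)\Rightarrow(a)\Rightarrow(b)\Rightarrow(c)$ does cover all equivalences; the decomposition $C=C_1\cup C_2$ in $(a)\Rightarrow(b)$ (separate the disjoint compacta $C\setminus U$ and $C\setminus V$ by disjoint open sets $W_1, W_2$ and take $C_1=C\setminus W_1$, $C_2=C\setminus W_2$) is the classical lemma from the Riesz representation theorem and is valid here since $X$ is Hausdorff; and in $(b)\Rightarrow(c)$ you correctly isolate where each hypothesis enters: $(b)$ is what makes $m^*$ countably subadditive, hence an outer measure, while \ref{DTM1}--\ref{DTM3} together with Lemma \ref{easyLeLC} give Carath\'eodory measurability of open sets through the inequality $\mu(U)\ge\mu(\overline W)+\mu(U\setminus\overline W)$ (which is the superadditivity already recorded in Remark \ref{tausm}).

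Two small points are worth tightening. First, in the measurability step the reduction from arbitrary test sets to open test sets of finite measure should be stated explicitly: given $A$ with $m^*(A)<\infty$ and $\epsilon>0$, choose open $U\supseteq A$ with $\mu(U)\le m^*(A)+\epsilon$ and use monotonicity of $m^*$ on $A\cap G\subseteq U\cap G$ and $A\setminus G\subseteq U\setminus G$. Second, in the final claim for finite $\mu$, ``complementation'' only upgrades outer regularity to inner regularity by \emph{closed} sets; to get inner approximation by \emph{compact} sets on all Borel sets (the usual meaning of ``regular Borel measure'') you also need to intersect with a large compact set: pick compact $K_0$ with $m(K_0)\ge m(X)-\epsilon$, which exists by inner regularity of the open set $X$, and replace a closed approximant $F\subseteq E$ by the compact set $F\cap K_0$. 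Both are routine repairs; the proof stands.
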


\begin{remark} \label{tausm}
In \cite[Section 3]{Butler:DTMLC} we show that a deficient topological measure $ \nu$ is $\tau$-smooth on compact sets
(i.e. if  a net $K_\alpha \searrow K$ , where $ K_\alpha, K \in \kx$ then  $\mu(K_\alpha) \rightarrow \mu(K)$), 
and also $\tau$-smooth on open sets (i.e.  if a net $U_\alpha \nearrow U$, where $U_\alpha, U \in \ox$ then $\mu(U_\alpha) \rightarrow \mu(U)$). 

A deficient topological measure $ \nu$ is also superadditive, i.e. 
if $ \bigsqcup_{t \in T} A_t \subseteq A, $  where $A_t, A \in \mathscr{O}(X) \cup \mathscr{C}(X)$,  
and at most one of the closed sets (if there are any) is not compact, then 
$\nu(A) \ge \sum_{t \in T } \nu(A_t)$. 

If $ F \in \cx$ and $C \in \kx$ are disjoint, then $ \nu(F) + \nu(C) = \nu ( F \sc C)$.

One may consult  \cite{Butler:DTMLC} for more properties of deficient topological measures on locally compact spaces.
\end{remark}

\begin{Definition}\label{SDTM}
A signed deficient topological measure on a locally compact space $X$ is a set function
$\nu:  \cx \cup \ox  \longrightarrow [ - \infty, \infty ] $  that assumes at most one of $\infty, 
-\infty$ and that is finitely additive on compact sets, inner compact regular on open sets, and outer regular on closed sets, 
i.e. 
\begin{enumerate}[label=(SDTM\arabic*),ref=(SDTM\arabic*)]
\item \label{SDTM1}
If $C \cap K = \O, \ C,K \in \kx$ then $\nu(C \sc K) = \nu(C) + \nu(K);$ 
\item \label{SDTM2} 
$\mu(U)=\lim\{\mu(K):K \in \bcx, \  K \se U\}
$ for $U\in\ox$;
\item \label{SDTM3}
$\mu(F)=\lim\{\mu(U):U \in \ox, \ F \se U\}$ for  $F \in \cx$.
\end{enumerate}
\end{Definition} 

\begin{remark} \label{netcond}

In condition \ref{SDTM2} we mean the limit of the net $\nu(C)$ with the index set $\{ C \in \kx: \ C \se U\}$ ordered
by inclusion. The limit exists and is equal to $\nu(U)$. Condition  \ref{SDTM3} is interpreted in a similar way, 
with the index set  being $\{ U \in \ox: \ U \supseteq C \}$ ordered by reverse inclusion.
\end{remark} 

\begin{remark} \label{byCompacts}
Since we consider set-functions that are not identically $ \infty $ or $ - \infty$, we see that for a signed deficient topological measure $ \nu (\O) = 0$.
If $\nu$ and $\mu$ are (signed) deficient topological measures that agree on $\kx$ (or on $\ox$) then $\nu =\mu$;
if $\nu \le \mu$ on $\kx$ (or on $\ox$) then $\nu  \le \mu$.
\end{remark}

\begin{remark}
For a signed deficient topological measure $\nu$ we may define its total variation, a deficient topological measure $| \nu|$,  by  
\begin{eqnarray*} 
|\nu| (U) = \sup \{ \sum_{i=1}^n |\nu(K_i)| : \  \bsc_{i=1}^n K_i \se U, \ K_i \se \kx,  \, n \in \N \}; 
\end{eqnarray*}
for an open set $U$, 
and for a closed subset $F \se X$ 
\begin{eqnarray*} 
|\nu| (F)  = \inf\{ |\nu| (U) : \ F \se U, \ U \in \ox\}. 
\end{eqnarray*} 
See \cite[Sections 2,3]{Butler:DTMLC} for detail.
\end{remark}

\begin{definition} \label{SDTMnorDe}
We define  $\| \nu \| = \sup \{ | \nu(K)|  :  K \in  \mathscr{K}(X) \} $ for a signed deficient topological measure $\nu$.
\end{definition}
\noindent
If $\mu$ is a deficient topological measure then $ \| \mu \| = \mu(X)$.

\begin{definition} \label{properSDTM}
A signed deficient topological measure $\nu$  is called proper if from $m \le |\nu| $, 
where $m$ is a Radon measure and $| \nu|$ is a total variation of $\nu$, it follows that $m = 0$.

Let $\mu$ be a deficient topological measure, and  $\nu$ be a signed deficient topological measure.
We say that $ \nu$ is absolutely continuous with respect to $ \mu$ 
(and we write $ \nu  \ll \mu$)  if $ \mu(A) = 0$ implies $ |\nu| (A) = 0$.
\end{definition}

\begin{definition} \label{STMLC}
A signed topological measure on a locally compact space $X$ is a set function
$\mu: \mathscr{O}(X) \cup \mathscr{C}(X) \longrightarrow [-\infty, \infty]$  that assumes at most one of $\infty, 
-\infty$ and satisfies the following conditions:
\begin{enumerate}[label=(STM\arabic*),ref=(STM\arabic*)]
\item \label{STM1} 
if $A,B, A \sqcup B \in \mathscr{K}(X) \cup \mathscr{O}(X) $ then
$\mu(A\sqcup B)=\mu(A)+\mu(B);$
\item \label{STM2}  
$\mu(U)=\lim\{\mu(K):K \in \mathscr{K}(X), \  K \subseteq U\}
$ for $U\in\mathscr{O}(X)$;
\item \label{STM3}
$\mu(F)=\lim\{\mu(U):U \in \mathscr{O}(X), \ F \subseteq U\}$ for  $F \in \mathscr{C}(X)$.
\end{enumerate}
\end{definition} 

\begin{remark} \label{DTMsdtm}
Definition \ref{SDTM} was first introduced in \cite{Butler:STMLC}.
Any deficient topological measure (topological measure, signed topological measure) is a signed deficient topological measure.
\end{remark}


\begin{remark} \label{RemBRT}
There is a correspondence between deficient topological measures and certain non-linear functionals, see \cite[Section 8]{Butler:ReprDTM}.
In particular, there is an order-preserving isomorphism between compact-finite 
topological measures on $X$ and quasi-integrals on $C_c(X)$, and $\mu$ is a measure iff the 
corresponding functional is linear (see Theorem 42 in Section 4 of \cite{Butler:QLFLC} and Theorem 3.9 in \cite{Alf:ReprTh} for the first version of 
the representation theorem.)
We outline the correspondence.
\begin{enumerate}[label=(\Roman*),ref=(\Roman*)]
\item \label{prt1}
Given a finite deficient  topological measure $\mu$ on a locally compact space $X$ and $f \in C_b(X)$, define functions on $\r$:
$$ R_1 (t) = R_{1, \mu, f} (t) =  \mu(f^{-1} ((t, \infty) )), $$
$$ R_2 (t) =  R_{2,  \mu, f} (t) =\mu(f^{-1} ([t, \infty) )). $$
Let $r$ be the Lebesque-Stieltjes measure associated with $-R_1$, a regular Borel measure on $ \mathbb{R}$.
We define a functional on $C_0(X)$: 
\begin{align} \label{rfform}
\mathcal{R} (f) & = \int _{\mathbb{R}}  id \,  dr = \int_a^b id \, dr  =   \int_a^b R_1 (t) dt + a \mu(X)  =  \int_a^b R_2 (t) dt + a \mu(X).  
\end{align}
where $[a,b]$ is any interval containing $f(X)$.
If $f$ is nonnegative with $f(X) \subseteq [0,b]$ we have:
\begin{align} \label{rfformp}
 \mathcal{R} (f) = \int_0^b  id \, dr  =   \int_0^b R_1 (t) dt =   \int_0^b R_2 (t) dt.
\end{align}
We call the functional $\mathcal{R}$ a quasi-integral (with 
respect to a deficient topological measure $ \mu$) and write:
\begin{align} \label{RFint}
\int_X f \, d\mu = \mathcal{R}(f) = \mathcal{R}_{\mu} (f) =  \int _{\mathbb{R}}  id \, dr.
\end{align}
\item   \label{RHOsvva}
Functional $\mathcal{R} $ is non-linear. 
By   \cite[Lemma 7.7,  Lemma 7.10, Lemma 3.6, Lemma 7.12]{Butler:ReprDTM}  we have:
\begin{enumerate}
\item
$\mathcal{R} (f) $ is positive-homogeneous, i.e. $\mathcal{R} (cf)  = c \mathcal{R} (f) $ for $c \ge 0$, and $\mathcal{R} (0) =0$. 
\item
if $g\, h = 0 $, where $ g, h \ge 0$ or $g \ge 0, h \le 0$, then $\mathcal{R} (gh) = \mathcal{R} (g) + \mathcal{R} (h)$.
\item 
$\mathcal{R}$ is monotone, i.e. if $ f \le g$  then $\mathcal{R} (f) \le \mathcal{R} (g)$.
\item
$ \mu(X)  \cdot \inf_{x \in X} f(x)  \le \mathcal{R}(f)  \le \mu(X) \cdot \sup _{x \in X} f(x) $ for any $f \in C_0(X)$. 
\end{enumerate}
\item \label{mrDTM} 
A functional $\rho$ with values in $[ -\infty, \infty]$ (assuming at most one of $\infty, - \infty$) and $| \rho(0) | < \infty$ 
is called a d-functional if   
on nonnegative functions it is positive-homogeneous, monotone, and orthogonally additive, i.e. for $f, g \in D(\rho)$ (the domain of $ \rho$) we have: 
(d1) $f \ge 0, \ a > 0  \Longrightarrow  \rho (a f) = a \rho(f)$; 
(d2) $0 \le  g \le f \Longrightarrow  \rho(g) \le \rho(f) $;
(d3) $f \cdot g = 0, f,g \ge 0  \Longrightarrow  \rho(f + g) = \rho(f) + \rho(g)$. 

Let  $\rho$ be a d-functional with $  C_c^+(X) \subseteq D(\rho) \subseteq C_b(X)$. 
In particular, we may take functional $ \rf$ on $  C_0^+(X)$. 
The corresponding
deficient topological measure $ \mu = \mu_{\rho}$ is given as follows: \\
If $U$ is open, $ \mu_{\rho}(U) = \sup\{ \rho(f): \  f \in C_c(X), 0\le f \le 1, supp \  f \subseteq U  \},$\\
if $F$ is closed, $ \mu_{\rho}(F) = \inf \{ \mu_{\rho}(U): \  F \subseteq U,  U \in \mathscr{O}(X) \}$. \\
If $K$ is compact, $ \mu_{\rho}(K) = \inf \{ \rho(g): \   g \in C_c(X), g \ge 1_K \}  
= \inf \{ \rho(g): \   g \in C_c(X), 1_K \le g \le 1 \}. $
(See Definition 33 and Lemma 35 in Section 4 of  \cite{Butler:QLFLC}.)
\end{enumerate}

If given a finite deficient topological measure $\mu$, we obtain $ \mathcal R$, and then $\mu_{ \mathcal R}$, then $ \mu = \mu_{ \mathcal R}$.
\end{remark}

\begin{definition}
Let $X$ be locally compact. A functional $ \rho$ on $\fvix$ is Lipschitz continuous if for every compact $K \se X$ there exists 
a number $N_K$ such that $|  \rho(f) - \rho(g)| \le N_K \| f - g\|$ for all $f, g \in \fcsx$ with support in $ K$.
\end{definition} 
  
The proof of the following theorem can be found, for example, in \cite{Bogachev}, \textsection 7.2.6.
\begin{theorem}  \label{Bogachev}
Let $l$ be a regular $\tau$-smooth measure on a topological space $X$ and let 
$\{ f_\alpha\}$ be  an increasing net of nonnegative  lower semicontinuous functions such that the function
$f = \lim_{\alpha} f_{\alpha}$ is bounded. Then
$$ \lim_{\alpha}  \int_X f_{\alpha}(x) \, l(dx) = \int_X f(x) \, l(dx).$$
\end{theorem}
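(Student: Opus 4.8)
The plan is to reduce the statement to the Cavalieri (layer-cake) representation of the integral and then to exploit $\tau$-smoothness on the super-level sets, handling the passage from a net to finitely many scalar conditions by directedness. First I would record two structural facts. Since the net $\{f_\alpha\}$ is increasing, $f = \lim_\alpha f_\alpha = \sup_\alpha f_\alpha$; and since a pointwise supremum of lower semicontinuous functions is again lower semicontinuous, $f$ is itself lower semicontinuous, hence Borel measurable, so no measurability issue arises from passing to an uncountable net. Moreover, for each real $t$ the super-level set $\{f > t\} = \bigcup_\alpha \{f_\alpha > t\}$ is an \emph{increasing, upward-directed} union of open sets.

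Next I would invoke the Cavalieri principle: for any nonnegative Borel function $g$ and the measure $l$, one has $\int_X g\, dl = \int_0^\infty l(\{g > t\})\, dt$. Writing $h_\alpha(t) = l(\{f_\alpha > t\})$ and $h(t) = l(\{f > t\})$, each is a nonincreasing function of $t$, the net $h_\alpha$ increases in $\alpha$, and since $f \le M$ for some bound $M$ the $t$-integration effectively runs over $[0,M]$. The decisive step is $\tau$-smoothness: applied to the increasing net of open sets $\{f_\alpha > t\} \nearrow \{f > t\}$ it gives, for every $t$, the identity $h(t) = \sup_\alpha h_\alpha(t)$. Thus the whole problem becomes the interchange $\sup_\alpha \int_0^M h_\alpha\, dt = \int_0^M (\sup_\alpha h_\alpha)\, dt$.

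The inequality $\le$ is immediate from $h_\alpha \le h$. For the reverse inequality I would \emph{not} appeal to the classical monotone convergence theorem directly, since $\{f_\alpha\}$ is a net rather than a sequence; instead I would discretize. Because $h$ is nonincreasing on $[0,M]$, its right-endpoint Riemann sums $\sum_{k=1}^n h(t_k)(t_k - t_{k-1})$ over the uniform partition $t_k = kM/n$ approximate $\int_0^M h\, dt$ from below and converge to it as $n \to \infty$. Fixing $\epsilon > 0$ and then $n$ with $\sum_k h(t_k)\,\Delta t > \int_0^M h\, dt - \epsilon$, I use that for each of the \emph{finitely many} points $t_1, \dots, t_n$ there is an index with $h_\alpha(t_k) > h(t_k) - \epsilon$; by directedness of the net and monotonicity of $h_\alpha$ in $\alpha$, a single index $\alpha$ serves all $k$ at once. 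For that $\alpha$, $\int_0^M h_\alpha\, dt \ge \sum_k h_\alpha(t_k)\,\Delta t > \int_0^M h\, dt - \epsilon(1 + M)$, and letting $\epsilon \to 0$ yields $\sup_\alpha \int_0^M h_\alpha\, dt \ge \int_0^M h\, dt$.

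The hard part will be precisely this net-versus-sequence issue: reducing the directed family to finitely many scalar conditions that one index can satisfy simultaneously is exactly what makes $\tau$-smoothness (rather than mere countable additivity) indispensable. A secondary point to handle with care is the case $\int_X f\, dl = \infty$, where one argues that $\sup_\alpha h_\alpha(t_0) = h(t_0) = \infty$ for some $t_0 > 0$ forces $\int_X f_\alpha\, dl \ge t_0\, l(\{f_\alpha > t_0\}) \to \infty$; the same discretization covers the remaining case in which every $h(t)$ is finite but $\int_0^M h\, dt$ diverges.
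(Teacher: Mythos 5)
The paper does not actually prove this theorem: it is imported from the literature (Bogachev, \S 7.2.6) with only a citation, so there is no internal proof to compare against. Your argument is correct and is essentially the standard proof of that cited result --- the layer-cake representation $\int_X g\,dl=\int_0^\infty l(\{g>t\})\,dt$, $\tau$-smoothness applied to the upward-directed families of open super-level sets $\{f_\alpha>t\}\nearrow\{f>t\}$ (this is exactly where lower semicontinuity is used), and directedness of the index set to satisfy the finitely many conditions coming from a Riemann-sum discretization of the nonincreasing distribution function with a single index; your separate treatment of the cases $l(\{f>t_0\})=\infty$ and $\int_0^M h\,dt=\infty$ is also sound.
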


\begin{remark} \label{LSCfn}
It is easy to see that a left-continuous nondecreasing function (that does not assume $-\infty$) 
or right-continuous nonincreasing function (that does not assume $-\infty$) is lower semicontinuous.
\end{remark}

\begin{corollary} \label{NetInt}
Let $l$ be a regular $\tau$-smooth measure on a topological space $X$. 
Let $\{ g_\alpha\}$ be a decreasing net of nonnegative nonincreasing left-continuous functions converging to $g$.
Suppose one of the functions $\{ g_\alpha\}$ is bounded. Then   
$$ \lim_{\alpha}  \int_a^b g_{\alpha}(x)  \, l(dx) = \int_a^b g(x) \, l(dx).$$
\end{corollary}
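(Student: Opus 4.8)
The plan is to reduce Corollary~\ref{NetInt} to Theorem~\ref{Bogachev} by replacing the decreasing net of nonincreasing functions with an increasing net of nondecreasing functions, obtained by subtracting each function from a fixed constant, and then transferring the resulting limit back.

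First I would normalize the net. The net of integrals $\alpha \mapsto \int_a^b g_\alpha \, l(dx)$ is decreasing in $[0,\infty]$, since $\{g_\alpha\}$ is a decreasing net of nonnegative functions; hence it converges, and its limit agrees with the limit along any cofinal subnet. Because some member $g_{\alpha_0}$ satisfies $0 \le g_{\alpha_0} \le M$, every index $\alpha \ge \alpha_0$ gives $0 \le g_\alpha \le g_{\alpha_0} \le M$, and such indices are cofinal in the directed index set. Thus I may pass to this subnet and assume $0 \le g_\alpha \le M$ for all $\alpha$, so that the pointwise limit also satisfies $0 \le g \le M$.

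Next I would set $f_\alpha := M - g_\alpha$ and $f := M - g$ and check that $\{f_\alpha\}$ meets the hypotheses of Theorem~\ref{Bogachev} on the compact interval $[a,b]$ equipped with the restriction of $l$, which is again regular and $\tau$-smooth. Each $f_\alpha$ is nonnegative (as $g_\alpha \le M$), and is nondecreasing and left-continuous, being $M$ minus a nonincreasing left-continuous function; by Remark~\ref{LSCfn} it is therefore lower semicontinuous. The net $\{f_\alpha\}$ is increasing exactly because $\{g_\alpha\}$ is decreasing as a net, and $f = M - g$ is bounded with $f_\alpha \nearrow f$. Applying Theorem~\ref{Bogachev} yields $\lim_\alpha \int_a^b f_\alpha \, l(dx) = \int_a^b f \, l(dx)$.

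Finally I would transfer the statement back to the $g_\alpha$ by subtracting the integral of the constant. Setting $c := \int_a^b M \, l(dx) = M\, l([a,b])$, one has $\int_a^b f_\alpha \, l(dx) = c - \int_a^b g_\alpha \, l(dx)$ and likewise $\int_a^b f \, l(dx) = c - \int_a^b g \, l(dx)$; subtracting the constant $c$ from both sides of the limit above gives the claim, and this subtracted limit matches the limit over the original net by the monotonicity remark from the first step. The one point that genuinely requires care, and the only place the boundedness hypothesis enters decisively, is ensuring $c < \infty$ so that the subtraction is legitimate and no $\infty - \infty$ occurs: this is where I would invoke that $[a,b]$ is compact and $l$, being a regular measure, is finite on compact sets, so that $c = M\, l([a,b]) < \infty$. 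Once the finiteness of $c$ is secured, the remaining manipulations are routine.
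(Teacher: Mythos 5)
Your proof is correct and follows essentially the same route as the paper: replace $g_\alpha$ by $f_\alpha = M - g_\alpha$ on the tail of the net where the uniform bound $M$ holds, invoke Remark~\ref{LSCfn} for lower semicontinuity, and apply Theorem~\ref{Bogachev}. Your write-up is more careful than the paper's (the cofinal-subnet reduction and the finiteness of $M\,l([a,b])$ needed to legitimize the subtraction are spelled out rather than left implicit), but the underlying argument is identical.
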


\begin{proof}
Suppose $g_{\gamma}$ is bounded above by $M>0$. Then $g_{\alpha} \le M$ for all $ \alpha \ge \gamma$.
Consider functions $f_{\alpha} = M - g_{\alpha}$ and $f= M - g$. By Remark \ref{LSCfn}, functions
$f_{\alpha} $ are lower semi-continuous. 
Apply Theorem \ref{Bogachev} to $f_{\alpha}$.
\end{proof}

\section{Integrals over a set via "restricted" deficient topological measures} \label{IntlSetRestr}

We would like to begin with integration with respect to a deficient topological measure over a set. Integration of a continuous bounded function with respect
to a deficient topological measure
is described in part \ref{prt1} of Remark \ref{RemBRT}, but it will not work for a function restricted to a set, as such a function need not be continuous anymore. 
Another approach to obtain integration over a set would be to integrate the function with respect to a deficient measure that is restricted to a set.  
But if $\mu$ is a deficient topological measure on $X$, one can not obtain a deficient topological measure on $A \in \ox \cup \cx$  by simply 
restricting $\mu$ to $A$, i.e., considering $\mu(A \cap B), \ B \in \ox \cup \cx$. One simple reason for this is that 
the intersection of two arbitrary  sets from $ \ox \cup \cx$ does not in general belong to $ \ox \cup \cx$.   
However, there is still a way to obtain new deficient topological measures by "restriction". 
The next two results are from~\cite[Section 5]{Butler:DTMLC}.

\begin{theorem}  \label{restUdtm}
Let $\mu$ be a deficient topological measure on a locally compact space $X, \ V \in \ox$. 
Define a set function $\mu_V$ on $\ox \cup \cx$ by letting
$$ \mu_V ( U ) = \mu(U \cap V), \ U \in \ox, $$
$$ \mu_V (F) = \inf \{ \mu_V (U): \ F \se U, \ U \in \ox \} , \ F \in \cx.$$
Then $ \mu_V$ is a deficient topological measure on $X$.
\end{theorem}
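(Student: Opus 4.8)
The plan is to check the three axioms \ref{DTM1}--\ref{DTM3} directly for $\mu_V$, using the formula $\mu_V(U)=\mu(U\cap V)$ on open sets and the stated infimum on closed sets. Axiom \ref{DTM3} holds by the very definition of $\mu_V$ on $\cx$, so the only consistency issue is that the two prescriptions agree on sets that are simultaneously open and closed; since $X$ is connected these are only $\O$ and $X$, and a one-line check settles both. Throughout I would freely use the standing properties of $\mu$: monotonicity, inner compact regularity on open sets \ref{DTM2}, finite additivity on disjoint compacts \ref{DTM1}, and superadditivity (Remark \ref{tausm}). I would also use the standard fact that in a locally compact (Hausdorff) space two disjoint compact sets can be enclosed in two disjoint open sets, refined if needed via Lemma \ref{easyLeLC}.

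For \ref{DTM2}, fix $U\in\ox$ and aim at $\mu(U\cap V)=\sup\{\mu_V(C):C\se U,\ C\in\kx\}$. The inequality ``$\ge$'' is immediate: for compact $C\se U$ the set $U$ itself is admissible in the infimum defining $\mu_V(C)$, so $\mu_V(C)\le\mu(U\cap V)$. For ``$\le$'' I would invoke inner regularity of $\mu$ to write $\mu(U\cap V)=\sup\{\mu(C):C\se U\cap V,\ C\in\kx\}$; for each such $C$ one has $C\se V$, so every open $W\supseteq C$ satisfies $W\cap V\supseteq C$, whence $\mu(W\cap V)\ge\mu(C)$ by monotonicity and thus $\mu_V(C)\ge\mu(C)$. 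Since also $C\se U$, taking suprema yields the claim.

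For \ref{DTM1}, let $C,K\in\kx$ be disjoint and fix disjoint open $G_1\supseteq C$, $G_2\supseteq K$. The superadditive direction $\mu_V(C\sc K)\ge\mu_V(C)+\mu_V(K)$ goes as follows: for any open $W\supseteq C\cup K$ the sets $W_1=W\cap G_1$ and $W_2=W\cap G_2$ are disjoint open neighborhoods of $C$ and $K$, so $(W_1\cap V)\sc(W_2\cap V)\se W\cap V$ and superadditivity of $\mu$ gives $\mu(W\cap V)\ge\mu(W_1\cap V)+\mu(W_2\cap V)\ge\mu_V(C)+\mu_V(K)$; the infimum over $W$ finishes this half.

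The reverse inequality is the crux, and it is where the argument is genuinely not routine: a deficient topological measure is \emph{not} subadditive, so I cannot simply bound $\mu$ of a union of two disjoint open sets by the sum of their values. To get around this I would, given $\eps>0$, choose $W_1\se G_1$ open with $C\se W_1$ and $\mu(W_1\cap V)<\mu_V(C)+\eps$, and similarly $W_2\se G_2$ with $K\se W_2$ and $\mu(W_2\cap V)<\mu_V(K)+\eps$; then $W=W_1\sc W_2\supseteq C\sc K$, so $\mu_V(C\sc K)\le\mu(W\cap V)=\mu((W_1\cap V)\sc(W_2\cap V))$. To bound the right-hand side I would pass to compact subsets: by inner regularity it suffices to bound $\mu(L)$ for compact $L\se(W_1\cap V)\sc(W_2\cap V)$, and since $G_1,G_2$ are disjoint, $L=(L\cap G_1)\sc(L\cap G_2)$ is a disjoint union of two compact sets, so \ref{DTM1} for $\mu$ together with monotonicity gives $\mu(L)=\mu(L\cap G_1)+\mu(L\cap G_2)\le\mu(W_1\cap V)+\mu(W_2\cap V)$. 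Taking the supremum over $L$ and then letting $\eps\to0$ yields $\mu_V(C\sc K)\le\mu_V(C)+\mu_V(K)$. The main obstacle is thus exactly this subadditive-type estimate, and the key idea is to recover additivity of $\mu$ for sets \emph{separated} by disjoint open sets by reducing to the compact pieces, where additivity is available by hypothesis.
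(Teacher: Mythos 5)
Your proof is correct, but note that there is nothing in this paper to compare it against: the paper states Theorem \ref{restUdtm} without proof, importing it from \cite[Section 5]{Butler:DTMLC} (``The next two results are from...''). Assessed on its own merits, your argument works. The consistency check on clopen sets (only $\O$ and $X$, by connectedness), the automatic validity of \ref{DTM3}, and the two inequalities for \ref{DTM2} --- in particular the key observation that $\mu_V(C)\ge\mu(C)$ for compact $C\se U\cap V$, which lets inner regularity of $\mu$ on $U\cap V$ transfer to $\mu_V$ --- are all fine. You also correctly identify the crux, namely the ``$\le$'' half of \ref{DTM1}, where the absence of subadditivity forces one to build disjoint $\eps$-good open neighborhoods $W_1\se G_1$, $W_2\se G_2$ of $C$ and $K$, and to estimate $\mu((W_1\cap V)\sc(W_2\cap V))$ by passing via inner regularity to compact $L$ and splitting $L$ along the separating sets. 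Three small points should be made explicit to make the write-up airtight: (i) an $\eps$-good $W\supseteq C$ can be replaced by $W\cap G_1$ using monotonicity, which justifies the choice $W_1\se G_1$; (ii) the pieces $L\cap G_1$ and $L\cap G_2$ are compact not because intersecting a compact set with an open set preserves compactness (it does not), but because $L\se G_1\cup G_2$ and $G_1\cap G_2=\O$ force $L\cap G_1=L\sm G_2$, a closed subset of $L$; only then may \ref{DTM1} for $\mu$ be applied; (iii) the case $\mu_V(C)+\mu_V(K)=\infty$ should be dismissed trivially before running the $\eps$-argument. None of these affects the validity of your approach.
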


\begin{theorem} \label{DTMmuF}
Let  $X$ be locally compact, and let $F \se \cx$. 
There exists a deficient topological measure $\mu_F$ on $X$ such that 
\[ \mu_F ( K )  = \mu(F \cap K) \ \ \mbox{for} \ \  K \in \kx, \]
\[ \mu_F(U) = \sup \{ \mu_F(K) : \ K \se U , \ K \in \kx \} \ \  \mbox{for}  \ \ U \in \ox. \]
If $F \in \kx$ then $\mu_F(C) = \mu(F \cap C)$ for every $ C  \in \cx$.
\end{theorem}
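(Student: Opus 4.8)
The plan is to take the two displayed formulas as the definition of $\mu_F$ on $\kx$ and $\ox$, extend it to the remaining closed sets by outer regularity, and then verify the three axioms of Definition~\ref{DTM}. Concretely, I set $\mu_F(K)=\mu(F\cap K)$ for $K\in\kx$ (note $F\cap K$ is compact, so this is a well-defined element of $[0,\infty]$), $\mu_F(U)=\sup\{\mu_F(K):K\se U,\ K\in\kx\}$ for $U\in\ox$, and $\mu_F(C)=\inf\{\mu_F(U):C\se U,\ U\in\ox\}$ for $C\in\cx$. Since a deficient topological measure is determined by its values on $\kx$ (Remark~\ref{byCompacts}), the content of the theorem is precisely that this set function satisfies \ref{DTM1}--\ref{DTM3}.

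First I would record two routine facts. Finite additivity \ref{DTM1} is immediate: if $C,K\in\kx$ are disjoint, then $F\cap C$ and $F\cap K$ are disjoint compact sets, so by \ref{DTM1} for $\mu$ we get $\mu_F(C\sc K)=\mu(F\cap(C\sc K))=\mu(F\cap C)+\mu(F\cap K)=\mu_F(C)+\mu_F(K)$. Next I would reformulate the open-set value: as $K$ runs over compact subsets of $U$, the sets $F\cap K$ run exactly over the compact subsets of $F\cap U$ (given a compact $L\se F\cap U$, take $K=L$, so that $F\cap K=L$), hence $\mu_F(U)=\sup\{\mu(L):L\in\kx,\ L\se F\cap U\}$. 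Conditions \ref{DTM2} and \ref{DTM3} then hold by construction, \emph{provided} the three defining formulas are mutually consistent on compact sets.

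The crux is this consistency: for $K\in\kx$ I must show $\mu(F\cap K)=\inf\{\mu_F(U):K\se U,\ U\in\ox\}$. The inequality ``$\le$'' is clear, since $\mu_F(U)\ge\mu(F\cap K)$ whenever $K\se U$. For ``$\ge$'' (assuming $\mu(F\cap K)<\infty$, the infinite case being trivial) fix $\eps>0$ and, using outer regularity \ref{DTM3} of $\mu$, choose $W\in\ox$ with $F\cap K\se W$ and $\mu(W)<\mu(F\cap K)+\eps$. Because $F\cap K\se W$, the compact set $K$ is contained in the open set $X\sm(F\sm W)$, so Lemma~\ref{easyLeLC} yields an open $U$ with $K\se U\se\overline U\se X\sm(F\sm W)$; then $U\cap(F\sm W)=\O$, which forces $F\cap U\se W$. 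By the reformulation above and \ref{DTM2}, every compact $L\se F\cap U\se W$ satisfies $\mu(L)\le\mu(W)$, whence $\mu_F(U)\le\mu(W)<\mu(F\cap K)+\eps$. Letting $\eps\to 0$ gives the claim. The main obstacle is exactly this step, where the separation properties of $X$ (via Lemma~\ref{easyLeLC}) must be combined with outer regularity of $\mu$; everything else is bookkeeping.

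Finally, for the last assertion, suppose $F\in\kx$ and let $C\in\cx$, so that $F\cap C$ is compact. The argument mirrors the consistency step. The inequality $\mu_F(C)\ge\mu(F\cap C)$ holds because $F\cap C$ is a compact subset of $F\cap U$ for every open $U\supseteq C$, so $\mu_F(U)\ge\mu(F\cap C)$. For the reverse, fix $\eps>0$ and pick $W\in\ox$ with $F\cap C\se W$ and $\mu(W)<\mu(F\cap C)+\eps$; now $F\sm W$ is compact and disjoint from $C$, hence $F\sm W\se X\sm C$, and by Lemma~\ref{easyLeLC} there is an open $V$ with $F\sm W\se V\se\overline V\se X\sm C$. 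Then $U=X\sm\overline V$ is open, contains $C$, and satisfies $U\cap(F\sm W)=\O$, i.e. $F\cap U\se W$, so again $\mu_F(U)\le\mu(W)<\mu(F\cap C)+\eps$. Thus $\mu_F(C)=\mu(F\cap C)$, completing the proof.
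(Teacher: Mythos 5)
Your proposal is correct, but note that there is no in-paper proof to compare it with: the paper imports Theorem~\ref{DTMmuF} verbatim from \cite[Section 5]{Butler:DTMLC} and never proves it, so your argument has to stand on its own --- and it does. You identify the right crux (consistency of the compact-set formula $\mu_F(K)=\mu(F\cap K)$ with the outer-regularization defining $\mu_F$ on $\cx$), your reformulation $\mu_F(U)=\sup\{\mu(L): L\in\kx,\ L\se F\cap U\}$ is justified correctly, and the two separation arguments are sound: for the consistency step you only need that $X\sm(F\sm W)$ is open and contains $K$ (in fact you could take $U=X\sm(F\sm W)$ outright, making the appeal to Lemma~\ref{easyLeLC} unnecessary there), while for the final assertion the appeal to Lemma~\ref{easyLeLC} is genuinely needed and is applied to the correct set --- the \emph{compact} set $F\sm W$ inside the open set $X\sm C$, which is exactly where the hypothesis $F\in\kx$ enters, since $C$ itself may fail to be compact. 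The axioms \ref{DTM1}--\ref{DTM3} then hold for the reasons you state: \ref{DTM1} transfers directly from $\mu$, \ref{DTM2} and the non-compact part of \ref{DTM3} hold by construction, and the compact part of \ref{DTM3} is your consistency claim. The only cosmetic omission is that in the last paragraph you should remark (as you did earlier) that the case $\mu(F\cap C)=\infty$ is settled by the inequality $\mu_F(C)\ge\mu(F\cap C)$ alone; this is immediate.
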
 



The next theorem states some properties of "restricted"  deficient topological measures $\mu_V$ and $\mu_F$,
given by Theorem \ref{restUdtm} and Theorem \ref{DTMmuF}.

\begin{theorem} \label{RestrDTsv}
Let $X$ be locally compact, $ V \in \ox, F \in \cx$, and $ \mu, \nu$ be deficient topological measures on $X$.
Let $\mu_V$ and $\mu_F$ be deficient topological measures given by Theorem \ref{restUdtm} and Theorem \ref{DTMmuF}. Then
\begin{enumerate}[label=(v\arabic*),ref=(v\arabic*)]  
\item \label{svMUFsup}
$\mu_V(U) = \sup \{ \mu_K(U): \ K \se U, \ K \in \kx \}$ for every $ U \in \ox$.
\item \label{svMUFinf}  
$\mu_F ( C) = \inf \{ \mu_V (C) : \  F \se V, \ V \in \ox \} $  for every $ C \in \kx$.
If $F$ is compact then the equality holds for every $ C \in \cx$.
\item \label{svMUFad}
If $A,B$ are disjoint compact sets, or disjoint open sets, then $ \mu_{A \sc B}  = \mu_A + \mu_B$.
\item \label{svMUAmon}
If $\mu \le \nu$ then $ \mu_A \le \nu_A$  for any set $ A \in \ox \cup \cx$.
\item \label{svMUmoSe}
If $A, B \se \ox \cup \cx, \ A \se B$ then $\mu_A \le \mu_B$.
\item \label{svConeLin}
If $a, b \ge 0$ then $(a \mu + b \nu)_A = a \mu_A + b \nu_A$ for any set $ A \in \ox \cup \cx$.
\item \label{muax}
If $A \in \ox \cup \kx$ then $ \mu_A(X)  = \mu(A) $. If $F \in \cx$ then $\mu_F(X) \le \mu(F)$.
\end{enumerate}
\end{theorem}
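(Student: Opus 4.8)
The plan is to reduce every assertion to the uniqueness/comparison principle of Remark~\ref{byCompacts}: two (signed) deficient topological measures that agree, or satisfy $\le$, on all of $\kx$ (or on all of $\ox$) are equal (resp. ordered). Thus for each part I would first check that the set function on the right-hand side is itself a deficient topological measure (sums and nonnegative scalar multiples of deficient topological measures are again deficient topological measures, as is immediate from \ref{DTM1}--\ref{DTM3}), and then verify the claimed identity or inequality only on the class where the defining formula is explicit---on $\ox$ via $\mu_V(U)=\mu(U\cap V)$ for the open-restriction, and on $\kx$ via $\mu_F(K)=\mu(F\cap K)$ for the closed-restriction. The recurring tools are monotonicity of a deficient topological measure, inner compact and outer regularity, finite additivity \ref{DTM1}, and the fact that in a locally compact (Hausdorff) space a compact set and a disjoint closed set are separated by disjoint open sets.

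The parts \ref{svMUAmon}, \ref{svConeLin}, \ref{muax} are the most direct. For \ref{svConeLin} and \ref{svMUAmon} with $A=V$ open I would evaluate on open $U$: $(a\mu+b\nu)_V(U)=(a\mu+b\nu)(U\cap V)=a\mu_V(U)+b\nu_V(U)$, and $\mu_V(U)=\mu(U\cap V)\le\nu(U\cap V)=\nu_V(U)$ when $\mu\le\nu$; for $A=F$ closed the same computations run on compacts $K$, and Remark~\ref{byCompacts} propagates the conclusion to all sets. For \ref{muax}, $\mu_V(X)=\mu(X\cap V)=\mu(V)$; $\mu_K(X)=\mu(K\cap X)=\mu(K)$ using the last clause of Theorem~\ref{DTMmuF} (as $X$ is closed); and for closed $F$, $\mu_F(X)=\sup\{\mu(F\cap K):K\in\kx\}\le\mu(F)$ by monotonicity, with equality failing in general because this is only an inner approximation.

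For the structural parts I would argue as follows. In \ref{svMUFad} with $A,B$ disjoint compact, on a compact $K$ the sets $A\cap K$ and $B\cap K$ are disjoint compacts, so $\mu_{A\sqcup B}(K)=\mu((A\cap K)\sqcup(B\cap K))=\mu_A(K)+\mu_B(K)$ by \ref{DTM1}; for $A,B$ disjoint open I would work on open $U$, where $U\cap A$ and $U\cap B$ are disjoint open, and use that a deficient topological measure is additive on disjoint open sets. The latter I would justify by writing any compact $C\subseteq(U\cap A)\sqcup(U\cap B)$ as the disjoint union of the compacts $C\setminus(U\cap B)$ and $C\setminus(U\cap A)$, applying \ref{DTM1} and monotonicity for ``$\le$'', and superadditivity (Remark~\ref{tausm}) for ``$\ge$''. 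For \ref{svMUmoSe} with $A\subseteq B$ I would split into four cases according to the type of $A$ and $B$: when both are open (resp. both closed) the inequality is monotonicity of $\mu$ applied to $U\cap A\subseteq U\cap B$ (resp. $A\cap K\subseteq B\cap K$); the two mixed cases use in addition inner or outer regularity to pass between the open and compact descriptions.

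The heart of the theorem is the regularity-duality in \ref{svMUFsup} and \ref{svMUFinf}, and here I expect the main obstacle. For \ref{svMUFsup} I would prove both inequalities on open $U$ with $K$ ranging over compact subsets of $V$: since $K\cap C'\subseteq U\cap V$ for compact $C'\subseteq U$, monotonicity gives $\mu_K(U)\le\mu(U\cap V)=\mu_V(U)$, while inner regularity writes $\mu(U\cap V)$ as the supremum of $\mu(D)$ over compacts $D\subseteq U\cap V$, and each such $D$ satisfies $\mu_D(U)\ge\mu(D\cap D)=\mu(D)$. The delicate statement is \ref{svMUFinf}: on compact $C$ one has $\mu_F(C)=\mu(F\cap C)$ and $\mu_V(C)=\inf\{\mu(U\cap V):C\subseteq U\in\ox\}$, so I must show $\mu(F\cap C)=\inf\{\mu(U\cap V):F\subseteq V,\ C\subseteq U\}$. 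The inequality $\le$ is monotonicity ($F\cap C\subseteq U\cap V$); for $\ge$, outer regularity computes $\mu(F\cap C)$ as the infimum of $\mu(W)$ over open $W\supseteq F\cap C$, and given such a $W$ the compact set $C\setminus W$ is disjoint from $F$, so I separate them by disjoint open $U_1\supseteq C\setminus W$ and $V\supseteq F$ and set $U=U_1\cup W\supseteq C$, whence $U\cap V\subseteq W$ and $\mu(U\cap V)\le\mu(W)$. The genuine difficulty is exactly this separation construction and checking that it yields admissible neighborhoods $U\supseteq C$, $V\supseteq F$; the case of compact $F$ with merely closed $C$ is handled by the symmetric construction (now $F\setminus W$ is the compact set disjoint from $C$) together with the last clause of Theorem~\ref{DTMmuF}.
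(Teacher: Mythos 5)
Your proposal is correct, and its skeleton---check each identity or inequality only on $\kx$ or on $\ox$, where the restricted set functions have explicit formulas, then propagate to all of $\ox\cup\cx$ via Remark~\ref{byCompacts}---is the same as the paper's. The substantive difference is where the work actually gets done. For part \ref{svMUFinf} the paper simply cites a lemma from \cite[Section 5]{Butler:DTMLC} (namely $\mu(F\cap C)=\inf\{\mu_V(C):F\subseteq V,\ V\in\ox\}$); you prove it outright, and your separation construction (the compact set $C\setminus W$ is disjoint from the closed set $F$, so Lemma~\ref{easyLeLC} applied inside $X\setminus F$ yields disjoint open $U_1\supseteq C\setminus W$ and $V\supseteq F$, and then $U=U_1\cup W$ gives $U\cap V\subseteq W$), together with its symmetric variant for compact $F$ and closed $C$, is exactly the content the paper imports. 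Similarly, you supply the argument for \ref{svMUFad} that the paper leaves to the reader, including the additivity of a deficient topological measure on two disjoint open sets via the decomposition $C=(C\setminus(U\cap B))\sqcup(C\setminus(U\cap A))$ combined with superadditivity. Two smaller divergences: in \ref{svMUFsup} you avoid the paper's case split on whether $\mu(U\cap V)$ is finite by running one double inequality through inner regularity; and in \ref{svMUmoSe} the paper settles the mixed case $A\in\ox$, $A\subseteq B\in\cx$ by invoking \ref{svMUFinf}, while your plan works directly if you compare on open sets: for compact $D\subseteq A\cap U$ one has $D\subseteq B$, hence $\mu(D)=\mu(B\cap D)\le\mu_B(U)$. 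Two points to make explicit in a write-up: the supremum in \ref{svMUFsup} as printed ranges over $K\subseteq U$, which cannot be literally intended (take $V=\emptyset$); your reading $K\subseteq V$ is the right one and matches both the paper's own proof (which takes $K\subseteq U\cap V$) and the application in Lemma~\ref{RaNiReg}. Also, your reduction needs sums and nonnegative multiples of deficient topological measures to again be deficient topological measures; this is true but not quite ``immediate''---one must check that the suprema and infima in \ref{DTM2}--\ref{DTM3}, taken over directed families of sets on which the set functions are monotone, commute with finite sums, a fact the paper likewise uses silently in parts \ref{svMUFad} and \ref{svConeLin}.
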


\begin{proof}
\begin{enumerate}[label=(v\arabic*),ref=(v\arabic*)]  
\item
Assume first that $\mu_V (U)  = \mu (U \cap V) < \infty$. Given $ \eps >0$, let 
$K \in \kx $ be such that $ K \se U \cap V, \  \mu(U \cap V) - \mu(K) < \eps.$
Note that for any $C \se U$ we have $ \mu_K(C) = \mu(K \cap C)  \le \mu(U \cap V) = \mu_V(U)$, 
which gives $\mu_K(U) \le \mu_V(U)$. 
Since
$ \mu_V(U) - \mu_K(U) \le \mu_V(U)  - \mu_K(K) = \mu(U \cap V) - \mu(K) < \eps$, 
we have: 
 $ \mu_V(U) = \sup \{ \mu_K(U): \ K \se U, \ K \in \kx \}. $
 
Now assume that  $\mu_V (U)  = \mu (U \cap V) = \infty$. 
For each $ n \in \N$ choose $K_n \in \kx$ such that $ K_n \se U \cap V, \ \mu(K_n) \ge n$.
For any compact $C$ such that $ K_n \se C \se U$ we have 
$ \mu_{K_n} (C) = \mu(K_n \cap C) = \mu(K_n) \ge n$, 
and we see that $\mu_{K_n} (U)  \ge n$. Then $ \sup \{ \mu_K(U): \ K \se U, \ K \in \kx \} = \infty = \mu_V(U)$. 
\item
Follows from Theorem \ref{DTMmuF} and the following result from~\cite[Section 5]{Butler:DTMLC}.
\begin{lemma} 
Let $X$ be locally compact,  $\mu$ be a deficient topological measure on $X$, and $F \in \cx$. Then for any $C  \in \kx$  
\begin{align} \label{muFCinf}
\mu(F \cap C) = \inf \{ \mu_V (C) : \  F \se V, \ V \in \ox \}, 
\end{align}
where $\mu_V$ is a deficient topological measure on $X$ from Theorem \ref{restUdtm}.
If $ F \in \kx$ then (\ref{muFCinf}) holds for any $C \in \cx$.
\end{lemma}
\item
By Remark \ref{byCompacts} it is enough to check the equality on compact sets (or open sets). The argument is easy and left to the reader.
\item
Take $ A \in \cx$ (respectively,  $ A \in \ox $). Then  $ \mu_A(K)  \le \nu_A(K) $ for every $ K \in \kx$ 
(respectively, $ \mu_A(U)  \le \nu_A(U) $ for every $ U \in \ox$). 
The statement then follows from Remark \ref{byCompacts}.
\item
Arguing as in part \ref{svMUAmon} we see that  $ \mu_A  \le \mu_B $  if $A, B \in \cx$ or if $A, B  \in \ox$.
If $ F \se U, F \se \cx, \ U \in \ox$, then by part \ref{svMUFinf}  $ \mu_F(K)  \le \mu_U(K) $
for every $ K \in \kx$. By Remark \ref{byCompacts} $ \mu_F \le \mu_U$. 
Consider the last case: 
$ U \se F, \ U \in \ox, F \in \cx$. Let $ V \in \ox$ be such that $ F \se V$.  So $U \se F \se V$. Then 
$\mu_U(K) \le \mu_V(K)$, and by part \ref{svMUFinf}  $\mu_U(K) \le \mu_F(K)$ for every $ K \in \kx$. By Remark \ref{byCompacts} 
$\mu_U \le \mu_F$.
\item
The argument is similar to one for part \ref{svMUAmon}.
\item
Easy to see from Theorem \ref{restUdtm} and Theorem \ref{DTMmuF}.
\end{enumerate}
\end{proof} 

\begin{definition} \label{IntOverA}
Suppose $X$ is locally compact, $\mu$ is a deficient topological measure on $X$, and $A \in \ox \cup \cx$ is such that 
$\mu(A) < \infty$. In particular,  for a finite deficient topological measure $\mu$ on $X$, we take $A \in \ox \cup \cx$. 
Let $\mu_A$ be a deficient topological measure given by Theorem \ref{restUdtm} or Theorem \ref{DTMmuF}.
We define the the quasi-integral (with respect to a deficient topological measure $\mu$) of a function $g \in C_b(X)$ over a set $A$ 
with $ \mu(A) < \infty$  to be
the functional $ \rf_{\mu_A} (g)$ as in formula (\ref{RFint}) (where we use a deficient topological measure $\mu_A$) and we write:
\begin{align}  \label{RNforml}
\int_A g \, d \mu = \int_X g \, d \mu_A =  \rf_{\mu_A} (g). 
\end{align}
\end{definition}

\begin{remark} 
If $A = X$ we have the usual $\rf_{\mu} (g)$. If $\mu$ is a measure, we obtain the standard integral $\int_A g \, d \mu$ over a set $A$ .
To evaluate $ \rf_{\mu_A} (g)$ we may use formulas (\ref{rfform}), (\ref{rfformp}), and part \ref{muax} of Theorem \ref{RestrDTsv}.
For example, if $f \in C_b(X),  f(X) \se [a,b],  E \in \ox \cup \kx$, and $ \mu(E) < \infty$,  we may use: 
\begin{align} \label{IntV}
\int_E f d \mu = a \mu(E) + \int_a^b R_{1, \mu_E, f}(t) \, dt = a \mu(E) + \int_a^b R_{2, \mu_E, f}(t) \, dt,  
\end{align}
where, for example, 
\begin{align*} 
R_{1, \mu_V, f}(t) = \mu_V ( f^{-1}((t, \infty))) = \nu(V \cap  f^{-1}((t, \infty))) \mbox{     for     } E = V \in \ox,
\end{align*} 
\begin{align} \label{IntCR}
R_{2, \mu_C, f}(t) = \mu_C(f^{-1}([t, \infty))) = \mu(C \cap f^{-1}([t, \infty))) \mbox{     for     } E = C \in \kx.
\end{align}
If $ f \in C_0^+(X)$ and $f(X) \se [0,b]$ then for $A \in \ox \cup \cx$ with $ \mu(A) < \infty$ we may use: 
\begin{align} \label{IntVC+}
 \int_A f d \mu = \int_0^b R_{1, \mu_A, f}(t) \, dt= \int_0^b R_{2, \mu_A, f}(t) \, dt.
\end{align}
Let $a' = \sup_E f, \ b' = \inf_E f$. Since $R_{i, \mu_E, f} (t)  = \mu(E)$ for $t \in [a, a')$, and $R_{i, \mu_E, f} (t)  =\O$ for $t \in (b', b]$ (where $i=1,2$),
we may rewrite ( \ref{IntV} ) as:
\begin{align} \label{IntVtoch}
\int_E f d \mu = a' \mu(E) + \int_{a'}^{b'} R_{1, \mu_E, f}(t) \, dt =  a' \mu(E) + \int_{a'}^{b'} R_{2, \mu_E, f}(t) \, dt 
\end{align}
\end{remark} 


\begin{lemma} \label{RaNiReg}
Let $X$ be locally compact, $\mu$ be a deficient topological measure on $X$, and $ g \in C_b(X)$.
We have:
\begin{enumerate}[label=(\roman*),ref=(\roman*)]  
\item
If $ V \in \ox,  \mu(V) < \infty$ then
$$ \int_V g \, d\mu =  \lim  \Big\{ \int_K g \, d\mu : \ K \se U, \ K \in \kx \Big\}. $$
\item 
Suppose $ g \in C(X)$ if $X$ is compact and $ g \in \fvixP$ if $X$ is locally compact. 
If  $F \in \cx, \mu(F) < \infty$ then 
$$ \int_F g \, d\mu =  \lim \Big\{ \int_V g \, d\mu : \ F \se V, \ V \in \ox \Big\}. $$
\end{enumerate}
\end{lemma}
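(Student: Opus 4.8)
The plan is to reduce both statements to the convergence of the one-variable ``layer-cake'' functions $R_{1}$ and $R_{2}$ that appear in the integral formulas (\ref{IntV}) and (\ref{IntVC+}), and then to apply the net-convergence results on the line, namely Theorem \ref{Bogachev} and Corollary \ref{NetInt}. Throughout I use the definition $\int_A g\, d\mu = \rf_{\mu_A}(g)$ together with the fact that, for a fixed continuous $g$ with $g(X)\subseteq [a,b]$, the functions $R_{1,\mu_A,g}(t)=\mu_A(g^{-1}((t,\infty)))$ and $R_{2,\mu_A,g}(t)=\mu_A(g^{-1}([t,\infty)))$ are monotone in the restricted measure $\mu_A$ by parts \ref{svMUmoSe} and \ref{muax} of Theorem \ref{RestrDTsv}.

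For (i) I order $\{K\in\kx:\ K\subseteq V\}$ by inclusion; it is directed, and by \ref{svMUmoSe} the measures $\mu_K$ increase with $K$ and are all dominated by $\mu_V$. Since each superlevel set $g^{-1}((t,\infty))$ is open, part \ref{svMUFsup} gives $R_{1,\mu_K,g}(t)\nearrow \mu_V(g^{-1}((t,\infty)))=R_{1,\mu_V,g}(t)$ for every $t$, while $\mu_K(X)=\mu(K)\nearrow\mu(V)=\mu_V(X)$ by inner regularity and \ref{muax}. Each $R_{1,\mu_K,g}$ is nonnegative, nonincreasing in $t$, and right-continuous (by $\tau$-smoothness on open sets, Remark \ref{tausm}), hence lower semicontinuous by Remark \ref{LSCfn}, and the limit is bounded by $\mu(V)<\infty$. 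Applying Theorem \ref{Bogachev} to the increasing net $\{R_{1,\mu_K,g}\}$ on $[a,b]$ and adding the convergence $a\,\mu(K)\to a\,\mu(V)$ of the constant terms yields, through (\ref{IntV}), that $\int_K g\, d\mu\to\int_V g\, d\mu$.

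For (ii) I order $\{V\in\ox:\ F\subseteq V\}$ by reverse inclusion; by \ref{svMUmoSe} the measures $\mu_V$ decrease and all dominate $\mu_F$, and outer regularity provides some $V_0\supseteq F$ with $\mu(V_0)<\infty$, so that $R_{2,\mu_{V_0},g}$ is bounded. Here I use the closed superlevel sets and $R_2$. In the locally compact case $g\in\fvixP$, hence $g^{-1}([t,\infty))$ is compact for every $t>0$, and part \ref{svMUFinf} gives $R_{2,\mu_V,g}(t)\searrow\mu_F(g^{-1}([t,\infty)))=R_{2,\mu_F,g}(t)$ for each such $t$, the single point $t=0$ being Lebesgue-null. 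Each $R_{2,\mu_V,g}$ is nonnegative, nonincreasing, and left-continuous (by $\tau$-smoothness on compact sets), so Corollary \ref{NetInt} applied to the decreasing net gives $\int_0^b R_{2,\mu_V,g}\, dt\to\int_0^b R_{2,\mu_F,g}\, dt$, which by (\ref{IntVC+}) is exactly $\int_V g\, d\mu\to\int_F g\, d\mu$. In the compact case $X$ is compact, so $F$ is itself compact, $\mu_F(X)=\mu(F)$ by \ref{muax}, every $g^{-1}([t,\infty))$ is compact, and the same argument now carrying the constant term $a\,\mu(V)\to a\,\mu(F)$ (from outer regularity) proves the claim through (\ref{IntV}).

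The main obstacle is the bookkeeping of regularity: one must match each layer-cake function to the correct regularity identity for $\mu_A$ --- open superlevel sets with the supremum identity \ref{svMUFsup} for (i), and compact superlevel sets with the infimum identity \ref{svMUFinf} for (ii) --- and then verify precisely the monotonicity, semicontinuity, and boundedness hypotheses demanded by Theorem \ref{Bogachev} and Corollary \ref{NetInt}. The most delicate point is the locally compact case of (ii): since \ref{svMUFinf} controls $\mu_F$ only on compact sets, one is forced to use $R_2$ and to observe that $g^{-1}([t,\infty))$ is compact only for $t>0$; the possible discrepancy $\mu_F(X)\le\mu(F)$ at $t=0$ is harmless, since it occurs on a null set and, because $g\ge 0$, there is no constant term to worry about.
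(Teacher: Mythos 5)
Your proof is correct and takes essentially the same route as the paper's: pointwise convergence of the layer functions $R_{1,\mu_K,g}$ (resp. $R_{2,\mu_V,g}$) obtained from parts \ref{svMUFsup}, \ref{svMUFinf}, \ref{svMUmoSe} of Theorem \ref{RestrDTsv}, followed by Theorem \ref{Bogachev} for the increasing net in (i) and Corollary \ref{NetInt} for the decreasing net in (ii). Your explicit handling of the point $t=0$ in the locally compact case of (ii) (where $\mu_F(X)$ may fall short of $\lim_V \mu(V)$ but this affects only a Lebesgue-null set) is a detail the paper passes over silently, so it is a welcome refinement rather than a deviation.
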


\begin{proof}
Let $g(X) \se [a,b]$. If $ g \in \fvixP$ we take $a =0$.
\begin{enumerate}[label=(\roman*),ref=(\roman*)]  
\item
For any $ K \se V,  K \in \kx$ the function $R_{1, \mu_K, g} (t)$ is right-continuous (see \cite[Lemma 6.3]{Butler:ReprDTM}), 
nonincreasing, so by Remark \ref{LSCfn} 
it is lower-semicontinuous. For each $ t \in [a, b]$
by part \ref{svMUmoSe} of Theorem \ref{RestrDTsv} the net $ \{ R_{1, \mu_K, g} (t): \, K \se V, K \in \kx \} $ (ordered by inclusion) is nondecreasing, and 
by part \ref{svMUFsup} of Theorem \ref{RestrDTsv}
\[ \lim_{K \se V, K \in \kx} R_{1, \mu_K, g} (t) = R_{1, \mu_V, g}(t).\]
Note that $0 \le  R_{1, \mu_V, g}(t) \le \mu(V)$.
Apply  formula (\ref{IntV}) and Theorem \ref{Bogachev} to obtain the statement.
\item
Since $ \mu(F) < \infty$, there exists $ U \in \ox$ such that $ F \se U, \mu(U)< \infty$, and then 
$ \mu(V) < \infty$ for all open sets $ V $ such that $ F \se V \se U$. 
For any $V \in \ox $ containing $F$ the function $R_{2, \mu_V, g}$ is left-continuous on $[a, b]$ (use \cite[Lemma 6.3]{Butler:ReprDTM}), 
nonincreasing, and bounded. The assumptions assure that each set  $g^{-1}([t, \infty)) $ is compact for $ t \in (a, b]$.  
For $ t \in [a,b]$ by part \ref{svMUmoSe} of Theorem \ref{RestrDTsv} the net $ \{ R_{2, \mu_V, g} (t): \,  V \in \ox, F \se V \} $
(ordered by reverse inclusion) is nonincreasing,
and by part \ref{svMUFinf} of Theorem \ref{RestrDTsv}
\[ \lim_{V \in \ox, F \se V} R_{2, \mu_V, g} (t) = R_{2, \mu_F, g}(t).\] 
Applying   formula (\ref{IntV})  and Corollary \ref{NetInt} we obtain the statement.
\end{enumerate}
\end{proof}

\begin{theorem} \label{RaNiDTM}
Let $X$ be locally compact, $\mu$ be a compact-finite deficient topological measure on $X$. 
\begin{enumerate}[label=(\alph{enumi}),ref=(\alph{enumi})] 
\item
Let  $ g \in \fvixP$. Then there exists a compact-finite deficient topological measure $\mu_g$ on $X$ such that 
 $\mu_g(A) = \int_A g \, d\mu $ for every $A \in \ox \cup \cx$ with $ \mu(A) < \infty$, and  
 $\mu_g(K) \le  \| g \| \mu(K)$ for any $ K \in \kx$. Also, 
 if $E \sc D = F$, where $E, D, F \in \cx$ and $ \mu(F) < \infty$, then $ \mu_g(E) + \mu_g(D) = \mu_g(F)$.
\item \label{cfmug}
Let $X$ be compact, $ g \in C(X)$. Then  a set function $ \nu_g$  on $A \in \ox \cup \cx$  given by 
$\nu_g(A) = \int_A g \, d\mu $ is a signed deficient topological measure with finite norm $\| \nu_g \| \le \| g \| \mu(X)$.
If $ g \ge 0$ then 
$\nu_g$ is a deficient topological measure, and $\nu_g = \mu_g$.
\end{enumerate}
\end{theorem}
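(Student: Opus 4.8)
The plan for part (a) is to construct $\mu_g$ from its values on compact sets and extend by regularity, letting Lemma~\ref{RaNiReg} supply the consistency that makes the extension a genuine deficient topological measure. Fix $b=\|g\|$, so $g(X)\se[0,b]$, and set $\mu_g(K)=\int_K g\,d\mu$ for $K\in\kx$. Since $\mu$ is compact-finite this is finite and nonnegative, and the bound in part~\ref{RHOsvva} of Remark~\ref{RemBRT} applied to $\mu_K$ (with $\mu_K(X)=\mu(K)$ by part~\ref{muax} of Theorem~\ref{RestrDTsv}) gives $\mu_g(K)=\rf_{\mu_K}(g)\le\|g\|\,\mu(K)$, so $\mu_g$ is compact-finite and satisfies the required estimate. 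I would then define $\mu_g(U)=\sup\{\mu_g(K):K\se U,\ K\in\kx\}$ for $U\in\ox$ and $\mu_g(F)=\inf\{\mu_g(U):F\se U,\ U\in\ox\}$ for $F\in\cx$, so that \ref{DTM2} and \ref{DTM3} hold by construction. The only point requiring care is that a compact set is also closed: I must check the two prescriptions agree at $K\in\kx$, i.e. that $\inf\{\mu_g(U):K\se U\}=\int_K g\,d\mu$. This is exactly Lemma~\ref{RaNiReg}(ii) (its hypotheses hold because $g\in\fvixP$ and $\mu(K)<\infty$), while Lemma~\ref{RaNiReg}(i) shows $\mu_g(U)=\int_U g\,d\mu$ whenever $\mu(U)<\infty$. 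Together these also deliver the asserted identity $\mu_g(A)=\int_A g\,d\mu$ for every $A$ with $\mu(A)<\infty$.

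It then remains to verify the additivity statements. For \ref{DTM1}, take disjoint $C,K\in\kx$; part~\ref{svMUFad} of Theorem~\ref{RestrDTsv} gives $\mu_{C\sc K}=\mu_C+\mu_K$ as set functions, so applying the layer-cake formula~(\ref{rfformp}) to the finite deficient topological measure $\mu_{C\sc K}$ and splitting each level value $\mu_{C\sc K}(g^{-1}((t,\infty)))=\mu_C(g^{-1}((t,\infty)))+\mu_K(g^{-1}((t,\infty)))$ yields $\mu_g(C\sc K)=\mu_g(C)+\mu_g(K)$. For the closed decomposition $E\sc D=F$ with $\mu(F)<\infty$, I would first prove $\mu_F=\mu_E+\mu_D$: by Theorem~\ref{DTMmuF} each restricted measure satisfies $\mu_F(K)=\mu(F\cap K)$, and since $E\cap K$ and $D\cap K$ are disjoint compacts, \ref{DTM1} for $\mu$ gives $\mu(F\cap K)=\mu(E\cap K)+\mu(D\cap K)=\mu_E(K)+\mu_D(K)$; as $\mu_E+\mu_D$ is again a deficient topological measure, Remark~\ref{byCompacts} upgrades agreement on $\kx$ to $\mu_F=\mu_E+\mu_D$, and the same layer-cake splitting gives $\mu_g(F)=\mu_g(E)+\mu_g(D)$.

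For part (b) the plan is to reduce to part (a) by a constant shift. With $X$ compact, $\mu$ is finite and every closed set is compact, so $\nu_g(A)=\int_A g\,d\mu$ is defined for all $A\in\ox\cup\cx$. Put $c=\|g\|$ and $h=g+c\in\fvixP$. Using formula~(\ref{IntV}) together with the change of variables $h^{-1}((t,\infty))=g^{-1}((t-c,\infty))$, one obtains $\int_A g\,d\mu=\int_A h\,d\mu-c\,\mu(A)$ for every $A\in\ox\cup\kx$, that is $\nu_g=\mu_h-c\mu$. This presents $\nu_g$ as a difference of two finite deficient topological measures, which is a signed deficient topological measure: \ref{SDTM1} is immediate from \ref{DTM1} for $\mu_h$ and $c\mu$, and \ref{SDTM2}, \ref{SDTM3} hold because the monotone bounded nets defining $\mu_h(U),c\mu(U)$ (resp. on closed sets) converge, so their difference converges to $\nu_g(U)$ (resp. $\nu_g(F)$). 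The norm bound follows from part~\ref{RHOsvva} of Remark~\ref{RemBRT}: $|\nu_g(K)|=|\rf_{\mu_K}(g)|\le\mu(K)\,\|g\|\le\mu(X)\,\|g\|$. Finally, when $g\ge0$ we may take $c=0$, so $\nu_g=\mu_g$ and part (a) already shows it is a deficient topological measure.

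The step I expect to be the main obstacle is the gluing in part (a): ensuring the three-piece prescription is consistent at compact sets and hence defines a single deficient topological measure reproducing $\int_A g\,d\mu$. This is precisely where Lemma~\ref{RaNiReg} is indispensable, and it explains why part (a) requires $g\in\fvixP$ rather than merely $g\in C_b(X)$: the proof of Lemma~\ref{RaNiReg}(ii) needs the super-level sets $g^{-1}([t,\infty))$ to be compact, which holds for functions vanishing at infinity but can fail for general bounded continuous functions. Everything else is bookkeeping built on the additivity of the restricted measures (Theorems~\ref{RestrDTsv} and~\ref{DTMmuF}) and the layer-cake formula~(\ref{rfformp}).
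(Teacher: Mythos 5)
Your proposal is correct, and its skeleton coincides with the paper's: both rest on Lemma~\ref{RaNiReg}, on the additivity of restricted measures from Theorem~\ref{RestrDTsv}\ref{svMUFad}, and on the bound $\rf_{\mu_K}(g)\le \mu_K(X)\|g\|$ from Remark~\ref{RemBRT}\ref{RHOsvva} together with Theorem~\ref{RestrDTsv}\ref{muax}. The differences are structural rather than substantive. In part (a) the paper does not verify the DTM axioms by hand: it sets $\lambda(K)=\int_K g\,d\mu$ on $\kx$, checks finite additivity via Theorem~\ref{RestrDTsv}\ref{svMUFad}, and then invokes the positive-variation construction $\lambda^+$ from \cite[Section 3]{Butler:DTMLC} to conclude that the extension is a deficient topological measure; your sup/inf extension is exactly what $\lambda^+$ unwinds to for a nonnegative monotone additive set function on $\kx$, so your explicit verification of \ref{DTM1}--\ref{DTM3} (consistency at compacts via Lemma~\ref{RaNiReg}, level-set splitting for \ref{DTM1}) makes the argument self-contained at the cost of redoing that machinery -- note only that identifying $\mu_g(F)=\int_F g\,d\mu$ for noncompact closed $F$ needs the small extra observation that the infimum over all open $U\supseteq F$ may be replaced by the infimum over the cofinal family with $\mu(U)<\infty$. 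For the closed decomposition $E\sc D=F$ the paper splits the compact level sets $g^{-1}([t,\infty))\cap F$ directly using \ref{DTM1} for $\mu$, which is equivalent to your route through $\mu_F=\mu_E+\mu_D$. In part (b) you genuinely diverge: the paper verifies the signed-DTM axioms for $\nu_g$ directly from Theorem~\ref{RestrDTsv}\ref{svMUFad} and Lemma~\ref{RaNiReg}, whereas you reduce to part (a) by the shift $\nu_g=\mu_{g+c}-c\mu$ with $c=\|g\|$ and observe that a difference of two finite deficient topological measures satisfies \ref{SDTM1}--\ref{SDTM3}. This reduction is clean and anticipates the identity $\nu_{g+c}=\nu_g+c\mu$ that the paper only proves later (Theorem~\ref{rgPr}\ref{withC}); what the paper's direct verification buys instead is independence from the decomposition trick and a proof pattern that transfers verbatim to settings where no convenient nonnegative shift exists.
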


\begin{proof}
\begin{enumerate}[label=(\alph{enumi}),ref=(\alph{enumi})] 
\item
Consider a set function $ \lambda$ on $ \kx$ given by $ \la(K) = \int_K g \, d\mu $. By part \ref{svMUFad} of Lemma \ref{RestrDTsv}
$\lambda$ is finitely additive on $ \kx$. We take $\mu_g$ to be the positive variation $\lambda^+$ of $\lambda$, so $ \mu_g$ is a deficient topological measure
(see \cite[Section 3]{Butler:DTMLC}). By definition of $ \lambda^+$ and Lemma \ref{RaNiReg} we see that   
$\mu_g(A) = \int_A g \, d\mu $ for every $A \in \ox \cup \cx$ with $ \mu(A) < \infty$. 
For any $K \in \kx$ using part \ref{RHOsvva} of Remark \ref{RemBRT}
and part \ref{muax} of Theorem \ref{RestrDTsv} we have:
\begin{align} \label{mugKest}
 \mu_g(K) = \rf_{\mu_K} (g) \le  \| g \| \mu_K(X) = \| g \| \mu(K),
\end{align}
and we see that $\mu_g$ is compact finite. 

Now suppose $E \sc D = F$, where $E, D, F \in \cx$ and $ \mu(F) < \infty$. 
Let $g(X) \se [0,b]$. For each $ t >0$
$$ g^{-1}([t, \infty)) \cap F =  (g^{-1}([t, \infty)) \cap E)  \sc  ( g^{-1}([t, \infty)) \cap G), $$
and all sets are compact.
Since $ \mu$ is finitely additive on compact sets, we see that 
$$ \int_0^b R_{2, \mu_F, g}(t) \, dt = \int_0^b R_{2, \mu_E, g}(t) \, dt + \int_0^b R_{2, \mu_D, g}(t) \, dt, $$
which (since $\mu(F), \mu(E), \mu(D) < \infty$) means that $ \mu_g(F) =  \mu_g(E) +  \mu_g(D)$.
\item
From part \ref{svMUFad} of Theorem \ref{RestrDTsv} and Lemma \ref{RaNiReg} we see that $\nu_g$ is a signed deficient topological measure.
As in (\ref{mugKest}), $| \nu_g(K)| \le \| g \| \mu(K) \le \| g \| \mu(X)$ for each $ K \in \kx$, so by Definition  \ref{SDTMnorDe}
$ \| \nu_g \| \le \|g \| \mu(X)$. 
If $ g \ge 0$ then $\nu_g = \mu_g$ on $\kx$, so by Remark \ref{byCompacts}  $\nu_g = \mu_g$.
\end{enumerate}
\end{proof}

The next example shows that by Theorem \ref{RaNiDTM} we can obtain a signed deficient topological measure.

\begin{example} \label{RNsigned}
In this example we shall use a "parliamentry" topological measure $\mu$ on  $X = [-4,0]\times [0,4]$:   
given an odd number of distinct points of a compact space $X$, 
we assign a closed or open set which is also solid (i.e. which is connected and  has a connected complement) $\mu$-value 1 if the set contains more than 
half of the points, and  $\mu$-value 0 otherwise. (For more information, see \cite[Remark 6.1]{Aarnes:Pure} or \cite[Introduction, Example b]{AarnesButler}.)
Let $\mu$ be the simple topological measure based on points 
$p=(-4,0), q=(-4,4)$, and $s=(0,4)$. Let $g(x,y) = x$, so $ g(X)  = [-4,0]$. 
Let $K \in \kx$ be the closed triangle with vertices $(-4,0), (-4,4)$, and $(0,0)$,  
so $K$ is a closed solid set with $ \mu(K) = 1$. Note that for any $ t  > -4$ the set $ K \cap g^{-1}([t, \infty))$ is a closed solid set with
$\mu(K \cap g^{-1}([t, \infty))) = 0$, and so
$R_{2, \mu_K, g}(t) = \mu( K \cap g^{-1}([t, \infty))) = 0.$ 
By formula (\ref{IntV}) we see that
$ \nu_g(K) = -4 \mu(K) = -4.$
Thus, $\nu_g$ is a signed deficient topological measure.
Note also that $ \nu_g(K) = -4 = \inf_K g \cdot \mu(K)$.

Now we shall show that $ \nu_g$ is not a signed topological measure.
Let $C \in \kx$ be the closed triangle with vertices $(-4,0), (-2,4)$, and $(0,0)$, and let $U,V$ be the connected components of $X \sm C$. 
Triangles $U, V$ are open solid sets. 
Since $\mu(U) = \mu(V) = 0, \mu(X) = 1$, the sets 
$U \cap  g^{-1}((t, \infty)), V \cap  g^{-1}((t, \infty)) $, and $  g^{-1}((t, \infty))$ are open solid sets with at most one of points $p, q, s$ for each $t >0$, 
by formula (\ref{IntV}) we calculate $ \nu_g(U) = \nu_g(V) = 0, \nu_g(X) = -4$.
For each $t>-4$ the set $C \cap g^{-1}([t, \infty))$ is not solid, but its complement consists of two disjoint open solid sets which we call $U_t$ and $V_t$,
and $\mu(U_t) = 1, \mu(V_t) = 0$. Then $R_{2, \mu_C, g}(t) = \mu( C \cap g^{-1}([t, \infty))) = \mu(X) - \mu(U_t) - \mu(V_t) = 0$ for each $t > -4$. Since
$\mu(C) = 0$, we obtain $\nu_g(C) = 0$. Since  $\nu_g(X) = -4$, while  $ \nu_g(V) = \nu_g(W) = \nu_g(C) = 0$, we see that $\nu_g$ 
is not a signed topological measure.
\end{example} 

\begin{remark} \label{RNsdtm}
If in Theorem \ref{RaNiDTM} $ \mu$ is a measure, then it is easy to see that $\nu_g$ is a signed measure, 
which is a measure if $g \ge 0$. However, the next example shows that 
if $\mu$ is a topological measure, then $\mu_g$ may not be a a topological measure. 
\end{remark}

\begin{example} \label{RNnunotTM}
Let $X = \r^2$, and  $D$ be the disk of radius 2 centered at the origin. Let $g \in \fcsx$ be the function with $ \supp g = D$, whose graph 
is the cone with the vertex $(0,2)$ and the base $D$.
Thus, $g(X) = [0,2]$.
Let $ \mu$ be the simple topological measure based on points $(0,0), (1,0)$ and $(4,0)$ as in \cite[Example 2]{Butler:WaysLC}. 
Since $g^{-1}((t, \infty))$ is a bounded open solid set, by  \cite[Remark 4]{Butler:WaysLC} we see that $R_{1, \mu, g}(t) = 1$ for $0 < t < 1$,
and  $R_{1, \mu, g}(t) = 0$ for $1 \le t \le 2$. Then  $\mu_g(X) = \int_X g \, d \mu = \int_0^4  R_{1, \mu, g}(t)  \, dt = 1$.
Let compact $K = \{ (x,0): 1 \le x \le 4 \} $ and  $V = X \sm K$. 
For each $t \ge 0$ we see that $K \cap g^{-1}([t, \infty))$ is a compact solid set, and  
$V \cap g^{-1}((t, \infty))$ is a
bounded open solid set. Note that $R_{2, \mu_K, g}(t) = \mu(K \cap g^{-1}([t, \infty))) = 0$, and $R_{1, \mu_V, g}(t) = 0$ for each $t >0$.
Thus,   $ \mu_g(K) = \mu_g(V) = 0$. It follows that $\mu_g$ is not a topological measure. 

When $ \mu$ is a topological measure, $\mu_g$ may not be a topological measure even for a strictly positive function $g$. 
Let $X = [1,3]\times [0,1]$.  Let $\mu$ be the "parliamentry" simple topological measure based on points 
$p=(2,0), q=(1,1)$, and $s=(3,1)$. Let $g(x,y) = x$, so $g>0$ and $g(X) = [1,3]$. 
Let $K $ be the triangle with vertices $ (1,1), (1,0),(3,0)$ and let $ V  = X \sm K$. 
So $ K \in \kx, V \in \ox, X =K \sc V$, and $\mu(K) = 1, \mu(V) = 0, \mu(X) = 1.$
Note that for $t \in (1, 3]$ the set $K \cap g^{-1}([t, \infty))$ is a closed solid set containing at most one of the points $p, q, s$, 
and so  $R_{2, \mu_K, g}(t) = \mu(K \cap g^{-1}([t, \infty))) = 0$. 
Similarly,  $R_{1, \mu_V, g}(t) = 0$ for  $t \in (1, 3].$
Using formula (\ref{IntV}) we 
see that $\mu_g(V) =0$ and  $\mu_g(K) =1$. 
By formula (\ref{rfform})  we also have $\mu_g(X) = 2$, since $R_{2, \mu, g}(t)  = 1$  if $t \in [1, 2]$, and 
$R_{2, \mu, g}(t)  = 0$  if $t > 2 $. Thus, $ \mu_g$ 
can not be a topological measure. 
\end{example}

\begin{theorem} \label{rgPr} 
Let $X$ be locally compact, $ \mu$ be a compact-finite deficient topological  measure on $X$.
Let $\mu_g$ (for $ g \in \fvixP$) and $ \nu_g$ (for $g \in C(X)$ and $X$ compact) be as in Theorem \ref{RaNiDTM}.
Then
\begin{enumerate}[label=(b\arabic*),ref=(b\arabic*)]
\item \label{rgA}
If $a \ge 0$ then $ \mu_{ag} = a \mu_g$. If $g=0$ then $\mu_g = 0$. The same holds for $\nu_g$.
\item
If $g\, h = 0 $, where $ g, h \ge 0$  then $ \mu_{g+h} = \mu_g+ \mu_h$. If $  g\, h = 0, \, g \ge 0, h \le 0$ then $ \nu_{g+h} = \nu_g+ \nu_h$. 
\item 
If $ g \le h$ then $ \mu_g \le \mu_h$ (respectively,  $ \nu_g \le \nu_h$).
\item \label{rgInfSup}
If $ A \in \ox \cup \cx $ and $ \mu(A) < \infty$  then for $ \la_g = \mu_g$ or $ \la_g = \nu_g$ 
\begin{align} \label{3sta}
 \mu(A)  \cdot \inf_A  g  \le \la_g(A) \le  \mu(A) \cdot \sup_A g.
\end{align}
In particular, 
\begin{align} \label{lagNorm}
\mu_g (A) \le  \mu(A) \cdot \sup_A g \le \| g \| \mu(A), \ \  | \nu_g(A) | \le  \mu(A)  \sup_A |g| \le  \| g \| \mu(A). 
\end{align} 
\item
If $ A \in \ox \cup \cx , \,  \mu(A) < \infty$  and $g = h $ on $A$ then $\mu_g(A) = \mu_h(A)$ (respectively, $\nu_g(A) = \nu_h(A)$).
\item \label{rgrhohP}
(Lipschitz continuity) If $K \in \kx,  f,g \in \fcsxP $ and $ \supp f , \supp g \se K$ then 
$|\mu_g(K) - \mu_f(K) = | \rf_{\mu_K} (g) -  \rf_{\mu_K} (f) |  \le \| f-g \| \mu(K)$.
\item \label{laglaK}
If $ A \in \ox \cup \kx $ and $g = 1 $ on $A$ then $\mu_g(A) = \mu(A)$ (respectively, $\nu_g(A) = \mu(A)$).
\item \label{laUlagK}
For any open set $U \se X$ we have $ \mu(U)  = \sup \{ \mu_g(K): K \se U, K \in \kx, g=1 \mbox{    on    } K \} $
(respectively, $\mu(U)  = \lim \{ \nu_g(K): K \se U, K \in \kx, g=1 \mbox{    on    } K \} $).
\item \label{lagLinComb} 
If $\mu =c \la+ d \delta$, where $ c,d \ge 0$ and $\lambda, \delta$ are compact-finite deficient topological measures on $X$,  
then $\mu_g = c \la_g +d \delta_g$. 
(Here $\lambda_g, \delta_g$ are deficient topological measures obtained from $\lambda, \delta$ by Theorem \ref{RaNiDTM}.)
Respectively,  $\nu_g = c \la_g +d \delta_g$, where  $\lambda_g, \delta_g$ are signed deficient topological measures obtained 
from $\lambda, \delta$ by Theorem \ref{RaNiDTM}. 
\item
If $ \mu \le \la $  where $\la$ is a compact-finite deficient topological measures on $X$, then $\mu_g \le \lambda_g$ 
(respectively, $\nu_g \le \lambda_g$) 
where $ \la_g$ is a deficient topological measure (respectively, a signed deficient topological measure) obtained
from $\lambda $ by Theorem \ref{RaNiDTM}.  
\item
If $ \mu$ is a proper deficient topological measure, then so is $\mu_g$ (respectivey, $\nu_g$).
If $g >0$ then $ \mu $ is a proper deficient topological measure
iff $\mu_g$ is a proper deficient topological measure.
\item \label{mugMea}
If $\mu $  is a measure then $\nu_g$ is a signed measure, 
which is a measure if $g \ge 0$.  
If $g >0$ then $ \mu $ is a Radon measure iff $\mu_g$ is a Radon measure.
\item 
If $\mu$ is finite and $ g \ge 0$ then $\mu_g(X) = \mu_g(Coz(g))$.
\item \label{absCont}
(Absolute continuity) For any $ \epsilon >0$ there exists $ \delta >0$ such that if $A \in \ox \cup \cx$ and $\mu(A) <\delta$, then
$ \mu_g(A) < \eps$ (respectively, $ | \nu_g(A)| < \eps$). Hence, $ \mu_g \ll \mu$ (respectively, $ \nu_g \ll \mu$). 
\end{enumerate}
If $X$ is compact, $g, h \in C(X)$ we also have:
\begin{enumerate}[label=(\roman*),ref=(\roman*)]
\item \label{withC}
If $c$ is a constant then $\nu_c = c \mu$ and $ \nu_{g + c} = \nu_g + c \mu$.
\item \label{Clevel} 
If $g \le c, \ g = c$ on $ \supp h, \ h \in C(X) $  then $ \nu_{g+h} = \nu_g + \nu_h$.
\item  \label{rgrhoh}
(Lipschitz continuity): $| \nu_g(A) - \nu_h(A) |  = | \rf_{\mu_K} (g) -  \rf_{\mu_K} (h) | \le  \mu (A)  \| g-h \| $ for  $ A \in \ox \cup \cx$.
\end{enumerate}
\end{theorem}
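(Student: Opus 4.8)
The unifying observation is that for a fixed $A$ with $\mu(A)<\infty$ the quantity in question is just the number $\la_g(A)=\rf_{\mu_A}(g)$, so the plan is to route every clause through three mechanisms and then invoke Remark~\ref{byCompacts} to promote an identity or inequality from $\kx$ (every compact set is $\mu$-finite, since $\mu$ is compact-finite) or from $\ox$ to an identity of (signed) deficient topological measures. The three mechanisms are: the algebraic laws of the quasi-integral $\rf$ in part~\ref{RHOsvva} of Remark~\ref{RemBRT}; the behaviour of the restrictions recorded in Theorem~\ref{RestrDTsv}, namely conicity~\ref{svConeLin}, monotonicity~\ref{svMUAmon}, additivity~\ref{svMUFad}, and $\mu_A(X)=\mu(A)$ from~\ref{muax}; and the explicit Lebesgue--Stieltjes formulas (\ref{IntV})--(\ref{IntVtoch}) and (\ref{IntVC+}).

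Positive homogeneity, orthogonal additivity and monotonicity of $g\mapsto\rf_{\mu_A}(g)$ are literally (a), (b), (c) of part~\ref{RHOsvva} applied to the finite deficient topological measure $\mu_A$, which after Remark~\ref{byCompacts} give~\ref{rgA}, the additivity clause $\mu_{g+h}=\mu_g+\mu_h$, and the monotonicity clause. For the sandwich~\ref{rgInfSup} I would avoid the crude estimate (d) and instead read it off the integral formula (\ref{IntV}): with $c=\inf_A g$ and $d=\sup_A g$ the integrand equals $\mu(A)$ for $t\le c$, vanishes for $t>d$, and stays in $[0,\mu(A)]$ on $[c,d]$ (using $R_{i,\mu_A,g}(t)\le\mu_A(X)=\mu(A)$), whence $c\,\mu(A)\le\rf_{\mu_A}(g)\le d\,\mu(A)$; the norm estimates~(\ref{lagNorm}) follow from $\sup_A|g|\le\|g\|$, and absolute continuity~\ref{absCont} follows by taking $\delta=\eps/\|g\|$ (with $|\nu_g|\le\|g\|\mu$, hence $\nu_g\ll\mu$, obtained by summing this bound over disjoint compacta in the total variation). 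The locality clause (if $g=h$ on $A$ then $\la_g(A)=\la_h(A)$) is immediate because $A\cap g^{-1}([t,\infty))=A\cap h^{-1}([t,\infty))$ for compact $A$ (and the analogous identity with $g^{-1}((t,\infty))$ for open $A$), so the integrands in~(\ref{IntV}) coincide; its special case $h\equiv1$ is exactly~\ref{laglaK}, $\la_g(A)=\rf_{\mu_A}(1)=\mu_A(X)=\mu(A)$, and~\ref{laUlagK} then follows by combining~\ref{laglaK} with inner regularity of $\mu$ and a Urysohn function equal to $1$ on each compact $K\subseteq U$. The cozero clause is similar: for $t>0$ one has $g^{-1}((t,\infty))\subseteq Coz(g)$, so $R_{1,\mu_{Coz(g)},g}(t)=R_{1,\mu,g}(t)$ and~(\ref{IntVC+}) gives $\mu_g(X)=\mu_g(Coz(g))$.

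Statement~\ref{lagLinComb} and the monotonicity-in-$\mu$ clause come from the fact that $\rf_\mu(f)=a\,\mu(X)+\int_a^b R_{1,\mu,f}(t)\,dt$ is linear and monotone in the measure $\mu$, combined with conicity~\ref{svConeLin} and monotonicity~\ref{svMUAmon} of the restriction, then descended to $\kx$. For the Lipschitz clauses~\ref{rgrhohP},~\ref{rgrhoh} and the constant clauses~\ref{withC} I would first record one elementary fact from~(\ref{rfform}): for finite $\mu_A$ and a constant $c$, the substitution $t\mapsto t-c$ in $R_{1,\mu_A,g+c}(t)=R_{1,\mu_A,g}(t-c)$ gives $\rf_{\mu_A}(g+c)=\rf_{\mu_A}(g)+c\,\mu(A)$. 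This yields~\ref{withC} at once, and Lipschitz continuity follows by monotonicity: with $c=\|g-h\|$ we have $g\le h+c$ and $h\le g+c$ pointwise, so $\rf_{\mu_A}(g)\le\rf_{\mu_A}(h)+c\,\mu(A)$ and symmetrically. The properness clause and the measure clause~\ref{mugMea} both reduce to the two-sided bound~\ref{rgInfSup}: from $\la_g\le\|g\|\mu$ on compacta any Radon $m\le|\la_g|$ satisfies $\|g\|^{-1}m\le\mu$, so properness of $\mu$ forces $m=0$; when $g>0$ the lower bound gives $\mu(K)\le(\inf_K g)^{-1}\la_g(K)$ on each compact $K$, which transfers properness and compact-finiteness back, while Theorem~\ref{subaddit} makes $\rf_\mu$ an ordinary integral in the measure case so that $A\mapsto\int_A g\,d\mu$ is a genuine (signed) measure.

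The delicate clause, and the one I expect to cost the most, is~\ref{Clevel}: if $g\le c$ and $g=c$ on $\supp h$ then $\nu_{g+h}=\nu_g+\nu_h$. Here orthogonal additivity (b) does not apply verbatim because $h$ changes sign, so the plan is to normalize by the shift identity above, subtracting $c$ via~\ref{withC} to reduce to $g_0:=g-c\le0$ with $g_0=0$ on $\supp h$ and the goal $\rf_{\mu_A}(g_0+h)=\rf_{\mu_A}(g_0)+\rf_{\mu_A}(h)$. Writing $h=h^+-h^-$ with $h^\pm\ge0$ having disjoint cozero sets inside $\supp h$, the three functions $g_0,h^+,h^-$ have pairwise disjoint cozero sets and $(g_0+h)^+=h^+$, $(g_0+h)^-=h^-+|g_0|$; I would then peel off $h^+\ge0$ from $g_0-h^-\le0$ by orthogonal additivity and evaluate the remaining nonpositive pieces the same way. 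The real work is to check that the orthogonal-additivity identity persists for two nonpositive functions with disjoint cozero sets --- a case not listed in part~\ref{RHOsvva} --- which I would derive from the stated cases by a short level-set argument; reassembling the pieces gives~\ref{Clevel}, and as before Remark~\ref{byCompacts} promotes it to all of $\ox\cup\cx$.
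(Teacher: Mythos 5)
Most of your proposal runs along the same rails as the paper's own proof: reduce each clause to compact sets via Remark \ref{byCompacts}, then invoke the d-functional properties of $\rf_{\mu_K}$ (part \ref{RHOsvva} of Remark \ref{RemBRT}), the restriction calculus of Theorem \ref{RestrDTsv}, and the explicit formulas (\ref{IntV})--(\ref{IntVtoch}). For parts \ref{rgA} through \ref{laUlagK}, \ref{lagLinComb}, the cozero clause, \ref{absCont}, \ref{withC} and \ref{rgrhoh} your mechanisms coincide with the paper's; your derivation of the shift identity $\rf_{\mu_A}(g+c)=\rf_{\mu_A}(g)+c\,\mu(A)$ by substitution, and of Lipschitz continuity from monotonicity plus that shift, is a clean self-contained substitute for the paper's citations of \cite[Lemma 7.12]{Butler:ReprDTM}. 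Two clauses, however, contain genuine gaps.

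First, clause \ref{Clevel}. Your decomposition needs orthogonal additivity of $\rf_{\mu_K}$ for two \emph{nonpositive} functions with disjoint cozero sets, which you admit is not among the listed cases and propose to obtain ``by a short level-set argument.'' That lemma is false. Take $X=[0,5]$ and transplant the deficient topological measure of Example \ref{Zfint}: $\mu(A)=1$ if $[1,4]\se A$ and $\mu(A)=0$ otherwise (finite additivity on compact sets follows from connectedness of $[1,4]$). Pick $u,v\in C(X)$ with $-1\le u,v\le 0$, disjoint supports, $u=-1$ on $[1,2]$, $v=-1$ on $[3,4]$. For every $t\in(-1,0)$ each of the sets $\{u>t\}$, $\{v>t\}$, $\{u+v>t\}$ fails to contain $[1,4]$, so all three $R_1$-functions vanish there, and formula (\ref{rfform}) gives $\rf_\mu(u)=\rf_\mu(v)=\rf_\mu(u+v)=-\mu(X)=-1$; hence $\rf_\mu(u+v)\neq\rf_\mu(u)+\rf_\mu(v)$. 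The structural reason no level-set argument can exist: for nonpositive $u,v$ with $uv=0$ one has $\{u+v>t\}=\{u>t\}\cap\{v>t\}$ for $t<0$ --- an intersection, not a disjoint union --- and deficient topological measures have no additivity across intersections. Worse, this example satisfies the hypotheses of \ref{Clevel} itself with $c=0$, $g=u$, $h=v$ (then $g\le c$ and $g=c$ on $\supp h$), yet $\nu_{g+h}(X)=-1\neq -2=\nu_g(X)+\nu_h(X)$; so the clause is only tenable under a sign restriction such as $h\ge 0$ (where the level-set computation does work: $\{g+h>t\}=\{g>t\}$ for $t<c$ and $\{g+h>t\}=\{h>t-c\}$ for $t\ge c$). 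The paper's proof goes through the $c$-level condition of \cite[Definition 4.3]{Butler:ReprDTM}, external machinery that your argument cannot replace by elementary means.

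Second, the backward direction of clause \ref{mugMea}: for $g>0$, if $\mu_g$ is Radon then $\mu$ is Radon. You claim this ``reduces to the two-sided bound \ref{rgInfSup},'' but being a Radon measure is a subadditivity property (Theorem \ref{subaddit}), and a setwise sandwich with set-dependent constants cannot transfer it: from $\mu(C\cup K)\le(\inf_{C\cup K}g)^{-1}\mu_g(C\cup K)\le(\inf_{C\cup K}g)^{-1}\bigl(\mu_g(C)+\mu_g(K)\bigr)$ one cannot recover $\mu(C\cup K)\le\mu(C)+\mu(K)$ unless $g$ is constant. The paper's proof rests on an idea absent from your proposal: the decomposition theorem \cite[Theorem 4.3]{Butler:Decomp}, writing $\mu=m+\lambda$ with $m$ Radon and $\lambda$ proper, so that $\mu_g=m_g+\lambda_g$ by \ref{lagLinComb}; uniqueness of such decompositions forces $\lambda_g=0$, and only then does the lower bound in (\ref{3sta}) (with $\inf_K g>0$) give $\lambda=0$. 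Relatedly, your backward properness argument is incomplete as written: the $K$-dependent bound $\mu(K)\le(\inf_K g)^{-1}\mu_g(K)$ does not exhibit a single Radon measure below $\mu_g$; it can be repaired by restricting a putative $0\neq m\le\mu$ to one compact $K$ with $m(K)>0$ and checking $(\inf_K g)\,m|_K\le\mu_g$, but that step is missing. Finally, your justification of the monotonicity-in-$\mu$ clause for the signed case ($\nu_g\le\lambda_g$) by reading ``monotone in the measure'' off the formula is not correct: for $g\equiv-1$ the formula gives $\rf_\mu(g)=-\mu(X)$, which is antitone in $\mu$; the paper instead cites \cite[Theorem 8.7(II)]{Butler:ReprDTM} here, and no elementary argument of the kind you describe covers sign-changing $g$.
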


\begin{proof}
\begin{enumerate}[label=(b\arabic*),ref=(b\arabic*)]
\item
Since $ag \in \fvixP$, we may define the deficient topological measure $\mu_{ag}$.
By Remark \ref{byCompacts} it is enough to show that $ a \mu_g = \mu_{ag}$ on $\kx$. 
Let $ K \in \kx$. By formula (\ref{RNforml}) and part \ref{RHOsvva} of Remark \ref{RemBRT} we have:
$ \mu_{ag} (K) = \rf_{\mu_K} (ag) = a \rf_{\mu_K} (g) = a \mu_g(K).$ 
Thus, $ \mu_{ag} = a \mu_g.$ 

If $g=0$ then  by part \ref{RHOsvva} of Remark \ref{RemBRT}$ \mu_g(K)  =\rf_{\mu_K}(0) = 0$ for  each $K \in \kx$. Thus, $\mu_g=0$.
The argument for $ \nu_g$ is the same.
\item 
We need to check that  $ \mu_{g+h} = \mu_g+ \mu_h$ on $\kx$. For $K \in \kx$ by  part \ref{RHOsvva} of Remark \ref{RemBRT} 
$ \mu_{g+h} (K) = \rf_{\mu_K} (g +h) =  \rf_{\mu_K} (g) +  \rf_{\mu_K} (h) = \mu_g(K) + \mu_h(K).$ The proof for $ \nu_g$ is the same.
\item
By Remark \ref{byCompacts} it is enough to check that $ \mu_g \le \mu_h$ (or $ \nu_g \le \nu_h$) on $ \kx$, 
which easily follows from  part \ref{RHOsvva} of Remark \ref{RemBRT}.
\item
Let $A \in \ox \cup \cx, \, \mu(A) < \infty$. By formula (\ref{IntVtoch}) we have 
$ a' \mu(A) \le \int_A f \, d \mu \le   a' \mu(A) + \int_{a'}^{b'} R_{2, \mu_A, f}(t) \, dt  \le   a' \mu(A) + \int_{a'}^{b'} \mu(A) \, dt  = b' \mu(A)$, and 
the statement follows.
\item
If $F \in \cx$ and $g=h$ on $F$ then $R_{2,\mu_F, g}(t) =R_{2,\mu_F, h}(t) $ for every $t >0$.
If $U \in \ox$ and $g=h$ on $U$ then $R_{1,\mu_U, g} =R_{1,\mu_U, h} $ for every $t \ge 0$. 
From formula (\ref{IntVC+}) (applied on $[a,b]$ that contains $f(X) $ and $g(X)$)  we see that $\mu_g(A) = \mu_h(A)$ for $A=F$ or $A=U$.
For $\nu_g$ we apply a similar argument using formula (\ref{IntV}).
\item
Let $K \in \kx$. Using 
\cite[Lemma 7.12(z8)]{Butler:ReprDTM} and part \ref{muax} of Theorem \ref{RestrDTsv} we have:
$ | \mu_g(K) - \mu_f(K)|  = | \rf_{\mu_K}(g) - \rf_{\mu_K}(f) | \le \mu_K(X)  \| g-f\| = \mu(K)  \| g-f\|$.
\item
Follows from formula (\ref{3sta}).
\item
Follows from part \ref{laglaK} and regularity of $\mu$.
\item
It is enough to check that  $\mu_g(K) = c \la_g(K) +d \delta_g(K)$ (respectively, $\nu_g(K) = c \la_g(K) +d \delta_g(K)$)
for each $K \in \kx$, which follows 
easily from formula (\ref{IntVC+}) (respectively, from (\ref{IntV})).
\item
Let $K \in \kx$. By part \ref{svMUAmon} of Theorem \ref{RestrDTsv} $\mu_K \le \la_K$. By  \cite[Theorem 8.7 (II)]{Butler:ReprDTM}
$\mu_g(K) = \rf_{\mu_K} (g) \le  \rf_{\lambda_K} (g) =\lambda_g(K)$. The statement follows.
\item 
Follows from formula (\ref{3sta}) and  \cite[Theorem 4.5(I), Section 4]{Butler:Decomp}.
\item
See first Remark \ref{RNsdtm}.
Now let $g>0$. Assume that $\mu_g$ is a Radon measure. By \cite[Theorem 4.3, Section 4]{Butler:Decomp} 
we may write $\mu = m + \la$ where $m$ is a  Radon measure 
and $\la$ is a proper deficient topological measure. By part \ref{lagLinComb} $\mu_g = m_g + \la_g$. 
But since $\mu_g$ and $m_g$ are both Radon measures, 
by uniqueness of decomposition in  \cite[Theorem 4.3, Section 4]{Butler:Decomp} we see that $ \lambda_g = 0$.  Then from  part \ref{rgInfSup} it follows 
that $\la (K) =0$ for every $K \in \kx$. Then $ \la = 0$ and $\mu =m$ is a Radon measure.
\item
Let $P = Coz g$. Since $R_{1, \mu_P, g}(t) = R_{1, \mu_X, g}(t)$ for all $t \ge 0$,  by formula (\ref{IntVC+})
we  have $\mu_g(P) = \rf_{\mu_P} (g)  = \rf_{\mu_X} (g) = \mu_g(X)$.
\item
Follows from (\ref{lagNorm}).
\end{enumerate}

\begin{enumerate}[label=(\roman*),ref=(\roman*)]
\item
It is enough to show that $\nu_{g+c} = \nu_g + c \mu$ on $\kx$; the other statement then will follow from part \ref{rgA}.
For $ K \in \kx$
by \cite[Lemma 7.12(i)]{Butler:ReprDTM}
and part \ref{muax} of Theorem \ref{RestrDTsv} we have:
\begin{align} \label{vsp}
 \nu_{g+c}(K) = \rf{\mu_K}(g + c) =  \rf{\mu_K}(g) + c \mu_K(X) = \nu_g(K) + c \mu(K).
 \end{align}  
\item
Suppose first $0 \le g \le c, \, g=c$ on $\supp h$.
We need to check that $ \nu_{g+h} = \nu_g + \nu_h$ on $\kx$. Let $K \in \kx$. 
From \cite[Theorem 7.10, Lemma 4.4]{Butler:ReprDTM} it follows that $ \rf_{\mu_K}$ satisfies the $c$-level condition 
(see  \cite[Definition 4.3]{Butler:ReprDTM}, so
$$ \nu_{g+h} (K) = \rf_{\mu_K}(g+h) =  \rf_{\mu_K}(g) +  \rf_{\mu_K}(h) =  \nu_g(K) + \nu_h (K).$$ 
For the general case $g \le c, \, g=c$ on $\supp h$ choose constant $b \ge 0$ such that $0 \le g + b \le c+b$ on $\supp h$.
For any $K \in \kx$ we have: $ \nu_{g+b+h} (K) = \nu_{g+b} (K) + \nu_{h}(K)$, which by (\ref{vsp}) gives
$\nu_{g+h}(K) + b \mu(K) = \nu_g (K) +b \mu(K) + \nu_h(K)$, i.e $\nu_{g+h}(K) = \nu_g (K) + \nu_h(K)$. Thus,   $\nu_{g+h} = \nu_g  + \nu_h$.
\item
 Using 
\cite[Lemma 7.12(ii), ]{Butler:ReprDTM} and  part \ref{muax} of Theorem \ref{RestrDTsv}, as in part \ref{rgrhohP} we obtain the statement.
\end{enumerate}
\end{proof}


\begin{remark}
Example \ref{RNsigned} and  similar examples
show that inequalities in part \ref{rgInfSup} of Theorem \ref{rgPr} can be realized as equalities.
\end{remark}

\begin{theorem} \label{vmSubadd}
Suppose $X$ is locally compact and  $ g \in \fvixP$, or $X$ is compact and $g \in C(X)$.  
Suppose $\mu$ is a compact-finite topological measure on  $X$.
Suppose $A, B , A \sc B \in \ox \cup \cx$, and $ \mu( A \sc B) < \infty$.
Then
\[ \int_{A \sc B} g \, d\mu \le \int_A  g \, d\mu  + \int_B g \, d\mu  +  \mu(A \sc B) \, \max\{ \omega(g, A ),  \omega(g, B) \}, \]
where $\omega(f, A) = \sup_{x  \in A}  f(x) - \inf_{x \in A} f(x)$.
\end{theorem}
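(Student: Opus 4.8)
The plan is to pass to the level-set (layer-cake) representation of the quasi-integral and then split the range of $g$ at the oscillation band of whichever of $A,B$ has the ``wrong'' topological type; outside that band the topological measure is exactly additive, and the band, whose width is the oscillation, carries the whole error. First I would reduce to $g\ge 0$: in the locally compact case $g\in\fvixP$ is already nonnegative, while in the compact case I replace $g$ by $g-\min_X g$, which by part \ref{withC} of Theorem \ref{rgPr} turns $\int_S g\,d\mu$ into $\int_S(g-\min_X g)\,d\mu+(\min_X g)\,\mu(S)$; since $A,B,A\sc B\in\kx\cup\ox$ forces $\mu(A\sc B)=\mu(A)+\mu(B)$ by \ref{TM1}, the added constants cancel in the desired inequality, and as $\omega(\cdot)$ is translation invariant the quantity $W:=\max\{\omega(g,A),\omega(g,B)\}$ is unchanged. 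Fixing $b$ with $g(X)\se[0,b]$, I record for every $S\in\ox\cup\cx$ with $\mu(S)<\infty$ the representation $\int_S g\,d\mu=\int_0^b\mu(S\cap L_t^S)\,dt$, where $L_t^S=g^{-1}((t,\infty))$ when $S$ is open and $L_t^S=g^{-1}([t,\infty))$ when $S$ is closed; this is formula (\ref{IntVC+}) together with $\mu_S(g^{-1}((t,\infty)))=\mu(S\cap g^{-1}((t,\infty)))$ for open $S$ (Theorem \ref{restUdtm}) and $\mu_S(g^{-1}([t,\infty)))=\mu(S\cap g^{-1}([t,\infty)))$ for closed $S$ (Theorem \ref{DTMmuF}, the level set being compact for $t>0$ since $g\in\fvixP$).

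Writing $E=A\sc B$, the structural point is that at most one of $A,B$ can have type opposite to $E$: if $E$ is open and $A$ is not open then $A$ is closed, so $B=E\sm A=E\cap(X\sm A)$ is open, and symmetrically when $E$ is closed. If neither piece is of the opposite type, then for every $t>0$ the three sets $A\cap L_t^E,\ B\cap L_t^E,\ E\cap L_t^E$ all lie in $\kx\cup\ox$ (all of $E$'s type, and compact for $t>0$ in the locally compact case), so \ref{TM1} gives additivity at every level and $\int_E g\,d\mu=\int_A g\,d\mu+\int_B g\,d\mu$, which is stronger than the claim. I therefore relabel $P$ for the piece sharing $E$'s type and $Q$ for the single opposite-type piece. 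Then $P$ and $E$ use the same layers $L_t:=L_t^E=L_t^P$, and since $\mu(P)\le\mu(E)<\infty$ by monotonicity both integrals are finite, giving $\int_E g\,d\mu-\int_P g\,d\mu=\int_0^b[\mu(E\cap L_t)-\mu(P\cap L_t)]\,dt$.

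Now set $m=\inf_Q g$ and $M=\sup_Q g$, so $M-m=\omega(g,Q)\le W$, and I split $[0,b]$ at $m$ and $M$. For $t<m$ one has $Q\se L_t$, hence $E\cap L_t=(P\cap L_t)\sc Q$; because $g\in\fvixP$ makes $g^{-1}([t,\infty))$ compact for $t>0$, the three sets $Q,\ P\cap L_t,\ E\cap L_t$ all lie in $\kx\cup\ox$ (when $E$ is open, $Q$ is closed and contained in the compact set $g^{-1}([t,\infty))$, hence compact; when $E$ is closed, $P\cap L_t$ and $E\cap L_t$ are compact and $Q$ is open), so \ref{TM1} yields $\mu(E\cap L_t)=\mu(P\cap L_t)+\mu(Q)$. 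For $t>M$ we have $Q\cap L_t=\O$, so $\mu(E\cap L_t)=\mu(P\cap L_t)$. On the band $[m,M]$ I use only monotonicity: $0\le\mu(E\cap L_t)-\mu(P\cap L_t)\le\mu(E)$. Integrating the three ranges gives $\int_E g\,d\mu-\int_P g\,d\mu\le m\,\mu(Q)+\mu(E)(M-m)$, and since the lower bound in (\ref{3sta}) gives $\int_Q g\,d\mu\ge\mu(Q)\inf_Q g=m\,\mu(Q)$, I conclude $\int_E g\,d\mu\le\int_P g\,d\mu+\int_Q g\,d\mu+\mu(E)\,\omega(g,Q)\le\int_A g\,d\mu+\int_B g\,d\mu+\mu(E)\,W$.

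The main obstacle is that topological measures are genuinely not subadditive — by Theorem \ref{subaddit}, subadditivity would force $\mu$ to be a measure — so there is no hope of bounding $\mu(E\cap L_t)$ by $\mu(A\cap L_t)+\mu(B\cap L_t)$ layer by layer, and moreover the layers $A\cap L_t,\ B\cap L_t$ are typically neither open nor closed. The device that circumvents this is to arrange that the layers of the bad piece $Q$ are all of $Q$ (below its band) or empty (above its band), where \ref{TM1} applies exactly, and to pay the crude error $\mu(E)(M-m)$ only on the band of width $\omega(g,Q)\le W$. The remaining care is bookkeeping in the locally compact setting: one must check via $g\in\fvixP$ that the layers entering \ref{TM1} are compact rather than merely closed, since \ref{TM1} requires membership in $\kx\cup\ox$.
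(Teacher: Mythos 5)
Your proof is correct and follows essentially the same route as the paper's: both single out the one piece $Q$ whose type is opposite to that of $E=A\sqcup B$, get exact level-set additivity from \ref{TM1} for $t$ below $\inf_Q g$ and triviality above $\sup_Q g$, pay the crude error $\mu(E)\,\omega(g,Q)$ only on the band in between, and absorb the term $(\inf_Q g)\,\mu(Q)$ into $\int_Q g\,d\mu$ via (\ref{3sta}). The only cosmetic differences are your normalization to $g\ge 0$ in the compact case (the paper instead keeps $a\le 0$ and cancels the constant terms using superadditivity, $\mu(U)-\mu(V)-\mu(D)\ge 0$) and your more explicit bookkeeping that the closed level sets entering \ref{TM1} are in fact compact.
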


\begin{proof} 
Let $g(X) \se [a,b]$ where $a \le 0$.
Suppose first that $D \sc V = U$, where $ D \se \cx, V, U \in \ox$, and $ \mu(U) < \infty$. 
Let $p = \inf_{x \in D} g(x), \, q = \sup_{x  \in D} g(x) $.

If $a< t < p $ then  $ D \se g^{-1}((t, \infty)) $, and so $ g^{-1}((t, \infty)) \cap U  = (g^{-1}((t, \infty)) \cap V ) \sc D$. Thus, 
$R_{1, \mu_U, g}(t) = R_{1, \mu_V, g}(t) + \mu(D)$.  
  
If $ t > q$ then $g^{-1}((t, \infty)) \cap D = \O$, so $ g^{-1}((t, \infty)) \cap U = g^{-1}((t, \infty)) \cap V$, i.e. 
$ R_{1, \mu_U, g}(t) = R_{1, \mu_V, g}(t)$.
Also, 
\begin{align*}
 \int_p^q  (R_{1, \mu_U, g}(t) - R_{1, \mu_V, g}(t)) dt &\le  \int_p^q  R_{1, \mu_U, g}(t) dt \le (q-p)  \mu(U)  \\
 &= \omega(g, D) \mu(D \sc V).
\end{align*}
By superadditivity $ \mu(U) - \mu(V) - \mu(D) \ge 0$. 
Using also (\ref{IntV}) and  (\ref{3sta}) we obtain:  
\begin{align*}
 \int_U g \, d\mu  &-  \int_V g \, d\mu  = a(\mu(U) - \mu(V)) + \int_a^b (R_{1, \mu_U, g}(t) - R_{1, \mu_V, g}(t)) dt  \\
&=   a(\mu(U) - \mu(V))  + \int_a^p \mu(D) dt  +  \int_p^q (R_{1, \mu_U, g}(t) - R_{1, \mu_V, g}(t)) dt   \\   
&\le  a(\mu(U) - \mu(V))  + (p-a) \mu(D) +  \omega(g, D) \mu(D \sc V)\\
&=  a(\mu(U) - \mu(V)- \mu(D))    + p \mu(D)  +  \omega(g, D) \mu(D \sc V)\\
& \le   p \mu(D)  +  \omega(g, D) \mu(D \sc V) \le \int_D g \, d \mu +  \omega(g, D) \mu(D \sc V).
\end{align*}
The statement now follows.

All other possible cases are proved similarly. For example,
if $D \sc V =C$, where $ D, C  \se \cx, \, V \in \ox $ and $ \mu(C) < \infty$, we use the fact that $R_{2, \mu_C, g}(t) = R_{2, \mu_D, g}(t) + \mu(V)$
for $ t < p = \inf_{x \in V} g(x)$, and  $R_{2, \mu_C, g}(t) = R_{2, \mu_D, g}(t) $ for $t >q = \sup_{x \in V} g(x)$.
\end{proof}

\begin{remark}
Theorem \ref{vmSubadd} says that for a deficient topological measure $\mu_g$ or a signed deficient topological measures $\nu_g$ 
obtained in Theorem \ref{RaNiDTM} we have
\begin{align} \label{omegaNer}
 \mu_g(A \sc B) \le \mu_g(A) + \mu_g(B)  +  \mu(A \sc B) \, \max \{ \omega(g, A ),  \omega(g, B) \}, 
\end{align}
\[ \nu_g(A \sc B) \le \nu_g(A) + \nu_g(B)  +  \mu(A \sc B) \, \max \{ \omega(g, A ),  \omega(g, B) \}, \]
whenever  $A, B , A \sc B \in \ox \cup \cx$, and $ \mu( A \sc B) < \infty$.
When $X$ is compact, in \cite[Theorem 5.10]{Svistula:Integrals}  it was shown that 
$\mu_g(A \sc B) \le \mu_g(A) + \mu_g(B)  +  \mu(X) \, \max\{ \omega(g, A ),  \omega(g, B) \}$, 
and it inspired Theorem \ref{vmSubadd}.

By Theorem \ref{DTMtoTM},  $\mu_g$ is a topological measure iff $ \mu_g(U) \le  \mu_g(K)  + \mu_g(U \sm K)$ 
for any open $U$ and any compact $K \se U$. 
Example \ref{RNnunotTM} shows that even when $ \mu$ is a finitely defined simple topological measure and $g$ is strictly positive, 
$\mu_g$ may not be a topological measure. This is a contrast to the situation with measures (see part \ref{mugMea} of Theorem \ref{rgPr}).
Inequality (\ref{omegaNer}) is what we can say in general. Using Theorem \ref{subaddit}, we can also say, that, unless $m_g$ is a measure, 
any inequality that estimates $\mu_g(A \sc B)$ above using $\mu_g(A)$ and $ \mu_g(B)$
(like (inequality \ref{omegaNer})) must have the form 
$\mu_g(A \sc B) \le \mu_g(A) + \mu_g(B)  +  \mu(A \sc B) + \delta(A,B)$ with $\delta(A,B) >0$ for some $A,B \in \ox$ or $A,B \in \kx$.
\end{remark} 
 
\section{Integrals over a set via functionals} \label{IntSetFnl}

In this section we present another interpretation of the deficient topological measure $\mu_g$ given by Theorem \ref{RaNiDTM} in the case where $ \mu$ is finite.

Let $X$ be locally compact. Let $\mu$ be a finite deficient topological measure on $X$ with the corresponding functional
$\rho = \mathcal{R}_{\mu} $ according to part \ref{prt1} of Remark \ref{RemBRT}. Let $g \in C_0^{+}(X)$. 
Let $\rho_g$ be a functional on $C_0(X)$ defined by $\rho_g(f) = \rho(fg)$. Since $\rho$ is a d-functional, so is  $\rho_g$. 

\begin{definition} \label{phiG}
Let $\mugt $ be the deficient topological measure corresponding to $\rg$ by part \ref{mrDTM} of Remark \ref{RemBRT}.  
\end{definition}

\begin{remark} \label{lagFormu}
By part \ref{mrDTM} of Remark \ref{RemBRT}
\begin{align*} 
\mugt (K) &= \inf\{ \rg(f) = \rho(fg): \ f \ge 1_K, f \in \fcsx \} \\
&=  \inf\{ \rg(f) = \rho(fg): \ 1_K \le f \le 1, \, f  \in \fcsx \}, 
\end{align*}
for $ K \in \kx$, and for  $U \in \ox$ 
\begin{align*} 
\mugt (U) = \sup \{ \rg(f) = \rho(fg): 0 \le f \le 1, \  f \in \fcsx, \ \supp f \se U \}.
\end{align*}
\end{remark}

\begin{remark} \label{intKmeas}
If $\mu$ is a regular Borel measure, it is easy to see that 
$$ \mugt (A)  = \int_A g \, d \mu $$ 
for every $ A \in \ox \cup \cx$.
In the next theorem we will show that the equality $ \mugt (A)  = \int_A g \, d \mu  $ also holds when $\mu$ is a finite deficient topological measure.
\end{remark}

\begin{theorem}  \label{intK}
Let $X$ be locally compact. Let $\mu$ be a finite deficient topological measure on $X$.
Let $g \in \fvixP$.  Let $\mugt $ be as in Definition \ref{phiG}, and 
let  $\mu_g $ be the deficient topological measure given by Theorem \ref{RaNiDTM}. 
Then $\mugt = \mu_g$.
\end{theorem}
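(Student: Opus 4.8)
The plan is to prove $\mugt = \mu_g$ by showing both are deficient topological measures that agree on compact sets, and then invoke Remark \ref{byCompacts}. Since both are deficient topological measures (the former by Definition \ref{phiG} and part \ref{mrDTM} of Remark \ref{RemBRT}, the latter by Theorem \ref{RaNiDTM}), it suffices to verify $\mugt(K) = \mu_g(K)$ for every $K \in \kx$. By Remark \ref{lagFormu} and the definition of $\mu_g$ via formula (\ref{RNforml}), this amounts to proving
\begin{align*}
\inf\{ \rho(fg): \ 1_K \le f \le 1, \, f \in \fcsx \} = \rf_{\mu_K}(g),
\end{align*}
so the whole task reduces to this single identity relating the functional $\rho$ (for $\mu$) evaluated on products $fg$ to the quasi-integral of $g$ against the restricted measure $\mu_K$.

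First I would establish the inequality $\mu_g(K) \le \mugt(K)$. For any $f \in \fcsx$ with $1_K \le f \le 1$, the function $fg$ agrees with $g$ on $K$, so intuitively integrating $fg$ over all of $X$ should dominate integrating $g$ over just $K$. To make this rigorous I would compare $\rho(fg) = \int_X fg \, d\mu$ with $\int_K g \, d\mu = \rf_{\mu_K}(g)$ using the superadditivity and monotonicity properties in Remark \ref{tausm} and part \ref{RHOsvva} of Remark \ref{RemBRT}. The key observation is that on $K$ we have $fg = g$, while $fg \ge 0$ everywhere; monotonicity of $\rho$ together with the fact that $\mu_K$ "sees" only the part of any set meeting $K$ should give $\rf_{\mu_K}(g) \le \rho(fg)$ for every admissible $f$, whence $\mu_g(K) \le \mugt(K)$ after taking the infimum.

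For the reverse inequality $\mugt(K) \le \mu_g(K)$ I would construct, for each $\eps > 0$, a suitable test function $f$. The natural choice exploits that $g$ vanishes at infinity: pick an open $V \supseteq K$ with compact closure (Lemma \ref{easyLeLC}) on which $g$ is uniformly close to its values on $K$, and take $f \in \fcsx$ with $1_K \le f \le 1$ and $\supp f \se V$. Then $fg$ is supported near $K$, and I would estimate $\rho(fg) = \int_X fg\, d\mu$ against $\int_V g\, d\mu$, using the regularity in Lemma \ref{RaNiReg} (which expresses $\int_K g\, d\mu$ as a limit over open sets $V \supseteq K$) and the Lipschitz/absolute-continuity bounds of Theorem \ref{rgPr}, particularly (\ref{3sta}), to control the excess $\int_{V \sm K} g\, d\mu$ by $\mu(V)\sup_{V} g$, which shrinks as $V \searrow K$.

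The main obstacle will be the reverse inequality, specifically controlling the contribution of $fg$ on the region $V \sm K$ where $f$ transitions from $1$ to $0$: the functional $\rho$ is only positively homogeneous and orthogonally additive, not subadditive, so I cannot simply split $\rho(fg)$ additively across $K$ and $V \sm K$. The delicate point is that the level sets $f^{-1}g^{-1}((t,\infty))$ need not decompose cleanly relative to $K$. I expect to handle this by working directly with the representing formula (\ref{rfform}) for $\rho(fg)$, comparing the functions $R_{1,\mu,fg}(t)$ and $R_{1,\mu_K,g}(t)$ pointwise in $t$ and integrating, using $\tau$-smoothness on compact sets (Remark \ref{tausm}) to pass to the limit as $V \searrow K$ and thereby absorb the error term, rather than attempting an additive decomposition of the functional itself.
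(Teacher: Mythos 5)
Your proposal is correct and follows essentially the same route as the paper's proof: reduce to compact sets via Remark \ref{byCompacts}; get $\mu_g(K) \le \mugt(K)$ from the level-set inclusion $K \cap g^{-1}([t,\infty)) \subseteq (fg)^{-1}([t,\infty))$ valid for every admissible $f$; and get the reverse inequality by taking Urysohn functions supported in shrinking neighborhoods of $K$, comparing distribution functions pointwise in $t$, and passing to the limit by $\tau$-smoothness together with a net monotone convergence result (the paper works with $R_2$, compact closures $C = \overline{V}$, and Corollary \ref{NetInt}, which is exactly what underlies the Lemma \ref{RaNiReg}(ii) limit you invoke instead). The one slip is your side remark that the excess is controlled by $\mu(V)\sup_V g$ ``which shrinks as $V \searrow K$'' --- it does not (it tends to $\mu(K)\sup_K g$; one would need $\mu(V \sm K) \le \mu(V) - \mu(K) \to 0$ via superadditivity and outer regularity) --- but your final paragraph's plan of comparing $R_{1,\mu,fg}$ with the distribution function of $g$ restricted to the neighborhood and then taking the limit avoids any such additive splitting, which is precisely what the paper does.
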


\begin{proof}
It is enough to show that $ \mugt= \mu_g$ on $\kx$. 
Let $K \in \kx$. Let $ g(x) \se [0,b], \, b \ge 1.$ 
By formula (\ref{IntVC+})
\begin{align} \label{r1}
\mu_g(K) = \rf_{\mu_K}(g) = \int_0^b R_{2, \mu_K, g}(t), 
\end{align}
where $ R_{2, \mu_K, g}(t) $ is given by formula (\ref{IntCR}).
Take any $f \in \fcsx$  such that $ 1_K \le f  \le 1$. 
Then  $[0,b]$ contains the ranges of functions $g$ and $gf$.
By formula (\ref{rfformp})
$$  \rho(fg) =  \int_0^b   R_{2, \mu, fg}(t) \, dt .$$
Note that 
$  g^{-1} ([t, \infty)) \cap K \se (fg)^{-1} ([t, \infty)),$
so 
$ R_{2, \mu_K, g}(t)  \le  R_{2, \mu, fg}(t).$
Then by (\ref{r1}) 
$\mu_g(K) \le \rho(fg) = \rg(f) $.
From Remark \ref{lagFormu}  
we see that   $\mu_g(K) \le  \mugt (K) $.

Now we shall show the opposite inequality. By Lemma \ref{easyLeLC}  choose  $U,V \in \ox$ and  $C = \cl V \in \kx$ such that $ K \se V \se C \se U$.
Let $\K = \{ C \in \kx:  \exists V \mbox{   with   } C = \cl V,  K \se V  \se C \se U \}$. 
$\K$ is a directed set with respect to inverse inclusion and 
$\bigcap_{C \in \K} C = K$. Let $C \in \K$. 
Let $f$ be the Urysohn function such that $1_K \le f \le 1_V$. 
For every $t>0$ we have $(fg)^{-1} ([t, \infty)) \se g^{-1} ([\frac{t}{f(t)}, \infty))  \cap f^{-1} ((0, \infty))  \se  g^{-1} ([t, \infty)) \cap C$,  
so $ R_{2, \mu, fg}(t) \le  R_{2, \mu_C, g}(t)$. Then 
$$ \mugt (K)  \le \rho_g(f) = \rho(fg) =   \int_0^b   R_{2, \mu, fg}(t) \, dt \le  \int_0^b R_{2, \mu_C, g}(t). $$
Each function $R_{2, \mu_C, g}$  is bounded above by $\mu(U)< \infty$. 
Also,  $R_{2, \mu_C, g}$ is left-continuous on $[0, b]$ 
(where left-continuity of $R_{2, \mu_C, g}(t) $ for $ t >0$ follows
from $\tau$-smoothness on compact sets in Remark \ref{tausm}).  
By Corollary \ref{NetInt}
$  \int_0^b R_{2, \mu_C, g} (t) \, dt \rightarrow  \int_0^b R_{2, \mu_K, g}, $ and so 
$ \mugt (K) \le  \int_0^b R_{2, \mu_K, g}(t) \, dt  =  \mu_g(K).$   
\end{proof}


We can say more about the values of the deficient topological measure 
$\mu_g$ using the measure $n_r$ on $X$ given by \cite[Definition 7.15]{Butler:ReprDTM}: 

\begin{theorem} \label{lagINT}
Suppose $\mu$ is a finite deficient topological measure on a locally compact space $X$, $g \in \fvixP$. 
Then
\begin{enumerate}[label=(\roman*),ref=(\roman*)]
\item \label{lagINT1}
For  $E= g^{-1} (A)$,  where $A \se \r$ is open or closed, 
$\mu_g(E) \le \int_E g \, dn_r$.
\item \label{lagINT2}
$ \mu_g(Ker g) = \int_{Ker g}  g \, dn_r =0$, and 
$ \mu_g(E) = \int_{E}  g \, dn_r =0 $
for $E= g^{-1} (A)$, where  $A \se (-\infty, 0)$ is open or closed,
\item \label{lagINT3}
If $\mu$ is a topological measure then 
$\mu_g(E) = \int_E g \, dn_r$
for $E= g^{-1} (A)$, where  $A \se \r  \sm \{0\} $ is closed or $ A \se \r$ is open.
\item \label{lagINT4}
If $X$ is compact and $\mu$ is a topological measure, then 
$ \nu_g(E) = \int_E g \, dn_r$
for $E= g^{-1} (A)$, where  $A \se \r $ is open or closed. 
\item \label{lagINT5}
If $X$ is compact then  $\mu$ is a topological measure on $X$ iff 
$ \mu_g(E) = \int_E g \, dn_r $ 
for any $g \in C^+(X)$ and $E= g^{-1}(C)$ for any closed $C \se \r$.
\end{enumerate}
\end{theorem}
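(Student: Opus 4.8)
The plan is to reduce the whole theorem to the behaviour of $\mu$ on the preimages $g^{-1}(A)$, compared with the one–dimensional Lebesgue–Stieltjes measure $r$ of $-R_{1,\mu,g}$. Recall that by \cite[Definition 7.15]{Butler:ReprDTM} the measure $n_r$ satisfies $n_r(g^{-1}(B)) = r(B)$ for Borel $B \se \r$, so that for $E = g^{-1}(A)$ a change of variables gives $\int_E g \, dn_r = \int_A s \, dr(s)$. Everything then rests on two facts: (a) $\mu(g^{-1}(W)) = r(W)$ for every open $W \se \r$; and (b) $\mu(g^{-1}(F)) \le r(F)$ for every closed $F \se \r$, with equality whenever $\mu$ is a topological measure.

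To prove (a) I would first treat a single interval $(c,d)$ with $0 < c < d$: superadditivity (Remark \ref{tausm}) applied to the decomposition $g^{-1}((c,\infty)) = g^{-1}((c,d)) \sc g^{-1}([d,\infty))$ gives $\mu(g^{-1}((c,d))) \le R_{1,\mu,g}(c) - R_{2,\mu,g}(d) = r((c,d))$, while testing the supremum formula for $\mu$ on open sets (part \ref{mrDTM} of Remark \ref{RemBRT}) with functions $\phi \circ g$, $\phi \prec (c,d)$, and using the representation $\rf(\phi \circ g) = \int_{\r} \phi \, dr$ of the quasi-integral on the subalgebra generated by $g$, gives the reverse inequality. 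For a union of separated intervals one splits any admissible test function along the disjoint preimages and applies orthogonal additivity of $\rf$ (part \ref{RHOsvva} of Remark \ref{RemBRT}), reducing to the single-interval case; a general open $W$ follows by exhausting it with finite unions. Fact (b) is then immediate: choosing open $W_n \searrow F$ and using monotonicity together with (a) yields $\mu(g^{-1}(F)) \le \inf_n r(W_n) = r(F)$, and when $\mu$ is a topological measure Theorem \ref{DTMtoTM} upgrades this to equality via $\mu(g^{-1}(W_n)) = \mu(g^{-1}(F)) + \mu(g^{-1}(W_n \sm F))$ combined with (a).

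With (a) and (b) in hand, parts \ref{lagINT1}--\ref{lagINT3} follow from the layer-cake formulas (\ref{IntVC+}), (\ref{IntV}). For open $A$ I would write $\mu_g(E) = \int_0^b R_{1,\mu_E,g}(t)\,dt = \int_0^b \mu(g^{-1}(A \cap (t,\infty)))\,dt$; since $A \cap (t,\infty)$ is open, (a) makes the integrand equal to $r(A \cap (t,\infty))$, giving $\mu_g(E) = \int_A s\,dr = \int_E g \, dn_r$. For closed $A$ I would instead use $R_2$, so that $A \cap [t,\infty)$ is closed and (b) gives the pointwise bound $\mu(g^{-1}(A\cap[t,\infty))) \le r(A\cap[t,\infty))$, hence $\mu_g(E) \le \int_E g\,dn_r$, with equality when $\mu$ is a topological measure. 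Part \ref{lagINT2} is separate and elementary: $Ker\, g$ is closed with $g^{-1}([t,\infty)) \cap Ker\, g = \O$ for $t>0$, so $R_{2,\mu_{Ker g},g} \equiv 0$ on $(0,b]$ and $\mu_g(Ker\, g)=0$; the integral vanishes because $g \equiv 0$ there, and for $A \se (-\infty,0)$ the set $g^{-1}(A)$ is empty.

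Part \ref{lagINT4} is the signed analogue: here $g$ may change sign, so I would run the same layer-cake computation through formula (\ref{IntV}) (keeping the $a\,\mu(E)$ term), using the topological-measure equalities in both (a) and (b) and invoking part \ref{lagINT2} to absorb the level $0$. For part \ref{lagINT5} the forward implication is part \ref{lagINT4}. For the converse I would feed the hypothesised equality into the layer-cake identity: equality of $\int_0^b \mu(g^{-1}(C \cap [t,\infty)))\,dt$ and $\int_0^b r(C \cap [t,\infty))\,dt$ together with the pointwise inequality (b) forces $\mu(g^{-1}(C \cap [t,\infty))) = r(C \cap [t,\infty))$ for a.e. $t$, hence for all $t$ by right-continuity; taking $C = [c,d]$ this gives $\mu(g^{-1}([c,d])) = r([c,d])$ for all $0 < c \le d$, i.e. additivity of $\mu$ along every level decomposition of every $g \in \fvixP$. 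To conclude that $\mu$ is a topological measure I would, given a compact $K$ inside an open $U$, use Lemma \ref{easyLeLC} and an Urysohn function $g$ whose superlevel sets realise the pair (say $g^{-1}([1,\infty)) = \overline V$ and $g^{-1}([2,\infty)) = K$), deduce $\mu(\overline V) = \mu(K) + \mu(\overline V \sm K)$ by $\tau$-smoothness (Remark \ref{tausm}), and then verify the criterion of Theorem \ref{DTMtoTM}. The main obstacle is this converse: turning the integral identity into genuine finite additivity of $\mu$ requires the level-set construction to reproduce an arbitrary compact-in-open pair, and the splitting step underlying fact (a) must be handled with care when $g^{-1}(W)$ is disconnected.
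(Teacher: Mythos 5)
There is a genuine gap, and it is at the very foundation of your argument. Your fact (a) --- that $\mu(g^{-1}(W)) = r(W)$ for \emph{every} open $W \se \r$ and every finite deficient topological measure $\mu$ --- is false, and so is the claim it rests on, namely the representation $\rf(\phi \circ g) = \int_{\r} \phi \, dr$ ``on the subalgebra generated by $g$.'' Linearity (integral representation) on singly generated subalgebras is exactly the property that distinguishes quasi-integrals of \emph{topological} measures; the d-functional of a deficient topological measure is only positive-homogeneous, monotone and orthogonally additive on nonnegative functions (part \ref{RHOsvva} of Remark \ref{RemBRT}), and nothing more. Concretely, take the deficient topological measure of Example \ref{Zfint}: $X = \r$, $\mu(A) = 1$ iff $[1,4] \se A$. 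Choose $g \in C_c^+(\r)$ vanishing off $[0,5]$ whose values on $[1,4]$ fill $[3/2, 3]$, with $\min_{[1,4]} g = 3/2$ and $\max_{[1,4]} g = 3$. Then $R_{1,\mu,g}(t) = 1$ for $t < 3/2$ and $=0$ for $t \ge 3/2$, so $r = \delta_{3/2}$. For $W = (1,2)$ we get $r(W) = 1$, while $g^{-1}(W)$ omits the points of $[1,4]$ where $g \ge 2$, so $\mu(g^{-1}(W)) = 0$: fact (a) fails. Likewise, for any $\phi$ with $0 \le \phi \le 1$, $\supp \phi \se (1,2)$, $\phi(3/2) = 1$, every set $(\phi \circ g)^{-1}((s,\infty))$, $s \ge 0$, lies inside $g^{-1}((1,2))$ and hence has $\mu$-measure $0$, so $\rf(\phi \circ g) = 0 \neq 1 = \int_{\r} \phi \, dr$. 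Note that the theorem itself signals this: part \ref{lagINT1} asserts only an inequality, and equality for open preimages appears only in part \ref{lagINT3} \emph{under the hypothesis that $\mu$ is a topological measure}; if your (a) were true, that hypothesis would be superfluous. Since parts \ref{lagINT1}, \ref{lagINT3}, \ref{lagINT4}, \ref{lagINT5} of your argument all feed on (a) (and on (b), which you derive from (a)), the proposal collapses.

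A secondary but real problem: even the correct one-sided inequality $\mu(g^{-1}(W)) \le r(W)$ for a general open $W$ does not follow from your scheme. Superadditivity (Remark \ref{tausm}) gives \emph{lower} bounds on the measure of a disjoint union, and deficient topological measures are not subadditive, so your passage from a single interval to a disconnected open set (and then to closed sets via $W_n \searrow F$) is unjustified in the $\le$ direction as well. This is precisely why the paper does not compare $\mu$ with $r$ directly: it routes everything through the auxiliary Borel measure $n_r$ of \cite[Definition 7.15]{Butler:ReprDTM} and the cited inequality \cite[Lemma 7.17]{Butler:ReprDTM}, $n_r(g^{-1}(B)) \ge \mu(g^{-1}(B))$ for $B$ closed or open, which becomes an equality only under the topological-measure hypotheses of parts \ref{lagINT3} and \ref{lagINT4}. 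With that input, the paper runs the same layer-cake computation you propose (distribution function $F(t) = n_r(g^{-1}([t,\infty)) \cap D)$ versus $R_{2,\mu_D,g}(t)$, integrated over $[0,b]$), handles the level $0$ in part \ref{lagINT3} by superadditivity exactly as you do, and proves the converse in part \ref{lagINT5} not by your Urysohn level-set construction but by Svistula's zero-set argument: for a zero set $Z = f^{-1}(0)$ one applies the hypothesis to $g = 1 - \tfrac1n f$, deduces $\mu(X) \le \mu(Z) + \mu(X \sm Z)$, and then extends to all closed sets using complete regularity and $\tau$-smoothness before invoking Theorem \ref{DTMtoTM}. Your plan for that converse is salvageable in spirit, but as written it also leans on (a)/(b) and so inherits the same flaw.
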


\begin{proof}
\begin{enumerate}[label=(\roman*),ref=(\roman*)]
\item
First, let $D = g^{-1}(C)$, where $C \se \r$ is closed. Let $g(X) \se [0, b]$. Set the measure $p$ to be the restriction
of the measure $n_r$ to $D$. Then 
$ \int_D g \, dn_r = \int_X g \, dp = \int_0^b F(t) dt, $
where $F$ is the distribution function for the measure $p \circ g^{-1}$ defined by 
$$ F(t) = p \circ g^{-1} ([t, \infty)) = p(g^{-1}([t, \infty))) = n_r (g^{-1}([t, \infty)) \cap D). $$
For  $ t >0$ using  \cite[Lemma 7.17]{Butler:ReprDTM} we have:
\begin{align*} 
F(t) &=n_r (g^{-1}([t, \infty)) \cap D) = n_r (g^{-1}([t, \infty)) \cap g^{-1}(C)) \\ 
&= n_r (g^{-1}([t, \infty)\cap C)) \ge \mu(g^{-1}([t, \infty)\cap C))  \\
&= \mu (g^{-1}([t, \infty)) \cap D) = R_{2, \mu_D, g}(t),
\end{align*}
so
\begin{align*}  
\int_D g \, dn_r = \int_0^b F(t) dt \ge \int_0^b R_{2, \mu_D, g}(t) dt = \mu_g(D).
\end{align*}

The case $U = g^{-1}(W)$, where $W \se \r$ is open, is similar. 
We use intervals $(t, \infty)$ instead of $[t, \infty)$  in the distribution function $F$.  Then as above, 
$F(t) \ge R_{1, \mu_U, g} (t)$ and 
$ \int_U g \, dn_r = \int_0^b F(t) dt \ge \int_0^b R_{1, \mu_U, g}(t) dt = \mu_g(U)$.

\item 
Using part \ref{lagINT1}, $ 0 \le \mu_g(Ker g)  \le \int_{Ker g} g \, dn_r = 0$.
If $A \se \r$ is an open or closed subset of $(-\infty, 0)$ then  $E= g^{-1} (A) = \O$. So $\mu_g(E) = \int_{E} g \, dn_r = 0. $
\item
Let  $A \se \r  \sm \{0\} $ be closed or open and  $E= g^{-1} (A)$.
The argument as in the proof of part \ref{lagINT1}, where by  \cite[Lemma 7.17(iii)]{Butler:ReprDTM} inequalities 
become equalities, shows that 
$ \mu_g(E) = \int_E g \, dn_r. $
We are only left to consider  the case $\mu_r(U)$, where $ U = g^{-1}(W)$ for 
an open set $ W \se \r$ such that $0 \in W$. 
Write $W = W_1 \sc \{0\}, \, U_1 = g^{-1}(W_1)$.
Then $U = U_1 \sc Ker g$ and 
\begin{align*}
\mu_g(U) \ge \mu_g(U_1) + \mu_g(Ker g) = \int_{U_1} g \, dn_r+  \int_{Ker g} g \, dn_r = \int_U g \, dn_r.
\end{align*}
Together with part \ref{lagINT1} this gives $\mu_g(U)= \int_U g \,dn_r.$ 
\item
The argument is essentially the same as the one for part \ref{lagINT1}, where by  \cite[Lemma 7.17(iv)]{Butler:ReprDTM}
inequalities become equalities. For example, for $D$ we have 
$F(t) = R_{2, \mu_D, g}(t)$ for any $t \in [a,b] = g(X)$, and then 
$$ \int_D g \, dn_r = a p(X) +  \int_a^b F(t) dt = a \mu(D) +  \int_a^b R_{2, \mu_D, g}(t) dt = \nu_g(D).$$
(Note that when $g \ge 0$, one may also use the argument from \cite[Theorem 20]{Svistula:DTM}.)
\item
The proof is basically one from \cite[Theorem 20]{Svistula:DTM} and is given here for completeness.
Let $Z \se X$ be a zero set. Say, $Z= f^{-1} (0)$. 
We may assume that $0 \le f \le1$.
Let $g = 1 - \displaystyle{\frac1n}f$. Then $g \in C^+(X), \ 1- \displaystyle{\frac1n} \le g \le 1$, and 
$g^{-1} (1) = Z$. Let $U = X \sm Z$. Using formula (\ref{3sta}) and part \ref{lagINT4} we have:
\begin{align*}
(1 - \frac1n) \mu (X) & \le \mu_g (X) = \int_X g \, dn_r =  \int_Z  g \, dn_r +  \int_U  g \, dn_r \\
&= \mu_g (Z) + \mu_g(U) \le \mu (Z) + \mu (U).
\end{align*} 
It follows that $\mu (X) \le \mu (Z)  + \mu (U) = \mu (Z) + \mu (X \sm Z)$.  
Every locally compact space is completely regular, and so the family of all zero sets is a base for closed sets.
From Remark \ref{tausm} 
it follows that  $\mu (X) \le \mu (C)  + \mu (X \sm C)$ for any closed set $C$, and by Theorem \ref{DTMtoTM}
$\mu$ is a topological measure.

\end{enumerate}
\end{proof}

\section{Integration over zero and cozero sets} \label{ZeroInt}

Integration with respect to a deficient topological measure sometimes gives the same results as integration with respect to a measure would give. 
For example, we have the following lemma:

\begin{lemma} \label{Mnullint}
Supppose $X$ is locally compact, $\mu$ is a deficient topological measure on $X, \,  f \in C_b(X)$.
If  $A \in \ox \cup \cx$ is such that $ \mu(A) = 0$ then $ \int_A f  d\mu = 0$.  
\end{lemma}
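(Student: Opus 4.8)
```latex
The plan is to reduce the integral over $A$ to an integral with respect to the restricted deficient topological measure $\mu_A$ (given by Theorem~\ref{restUdtm} or Theorem~\ref{DTMmuF}), and then to show that $\mu_A$ is identically zero on compact sets, so that the quasi-integral $\rf_{\mu_A}(f)$ collapses to zero. Recall that by Definition~\ref{IntOverA} we have $\int_A f\, d\mu = \rf_{\mu_A}(f)$, so the whole statement is about the functional attached to $\mu_A$.

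First I would observe that $\mu(A) = 0$ forces $\mu_A(K) = 0$ for every compact $K$. Indeed, by part~\ref{muax} of Theorem~\ref{RestrDTsv}, if $A \in \ox \cup \kx$ then $\mu_A(X) = \mu(A) = 0$, and if $A \in \cx$ then $\mu_A(X) \le \mu(A) = 0$; in either case $\mu_A(X) = 0$. Since $\mu_A$ is a (nonnegative) deficient topological measure and is superadditive (Remark~\ref{tausm}), monotonicity on compact sets gives $0 \le \mu_A(K) \le \mu_A(X) = 0$ for all $K \in \kx$. Hence $\mu_A$ is the zero set function on compact sets, and therefore $\mu_A = 0$ everywhere by Remark~\ref{byCompacts}.

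Next I would feed this into the computation of the quasi-integral. Writing $f(X) \se [a,b]$ and using formula (\ref{IntV}) (or (\ref{rfform})), we have
\begin{align*}
\int_A f\, d\mu = \rf_{\mu_A}(f) = a\,\mu_A(X) + \int_a^b R_{2,\mu_A,f}(t)\, dt,
\end{align*}
where $R_{2,\mu_A,f}(t) = \mu_A(f^{-1}([t,\infty)))$. Since $f^{-1}([t,\infty))$ is closed and $\mu_A \equiv 0$, we get $R_{2,\mu_A,f}(t) = 0$ for all $t$; combined with $\mu_A(X) = 0$, the entire expression vanishes, giving $\int_A f\, d\mu = 0$.

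I do not expect a serious obstacle here: the content is entirely that the restriction $\mu_A$ degenerates to the zero measure, which follows cleanly from part~\ref{muax} of Theorem~\ref{RestrDTsv} together with monotonicity and superadditivity. The only mild care needed is that $f$ is merely bounded continuous (not necessarily nonnegative), so I must use the general formula (\ref{IntV}) with the $a\,\mu_A(X)$ term rather than the nonnegative version (\ref{IntVC+}); but since both the boundary term and the integrand vanish, this causes no trouble.
```
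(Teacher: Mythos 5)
Your proof is correct and follows essentially the same route as the paper: both arguments come down to showing that the restricted deficient topological measure $\mu_A$ vanishes (so that $R_{1,\mu_A,f}\equiv 0$, respectively $R_{2,\mu_A,f}\equiv 0$) and then reading off $\int_A f\,d\mu=0$ from formula (\ref{IntV})/(\ref{rfform}). The paper is merely terser, asserting $R_{1,\mu_V,f}(t)=0$ and $R_{2,\mu_C,f}(t)=0$ directly, whereas you first establish $\mu_A\equiv 0$ via part \ref{muax} of Theorem \ref{RestrDTsv}, superadditivity, and Remark \ref{byCompacts}; the substance is identical.
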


\begin{proof}
Let $f(X) \se [a,b]$.
Consider first $ V \in \ox$ such that $ \mu(V) = 0$.  Since $R_{1, \mu_V, f}(t) = 0$ for any $t \in [a,b]$, 
using formula  (\ref{IntV}) we see that $ \int_V f  d\mu = 0$. 
For $C \in \cx, \mu(C) = 0$ we may apply a similar argument using  formula (\ref{IntV}) and the fact that $  R_{2, \mu_C, f}(t)  = 0  $.
\end{proof}

Integration with respect to a deficient topological measure sometimes is also very different from
integration with respect to a measure.
If $ \mu$ is a measure and $\mu(Coz(f)) = 0$, then $ \int_X f  d\mu = 0$.  Example \ref{Zfint} below
shows that this is generally false if $ \mu$ is a deficient topological measure.
(On the other hand, part \ref{zf3} of Theorem \ref{Zfintn} below shows that this is true for nonnegative $f$, assuming that 
$\mu(Coz f) < \infty$ (in particular, for finite $ \mu$); this theorem also generalizes Lemma \ref{Mnullint}).  

\begin{example} \label{Zfint}
Let $X=\r, D = [1,4]$ and $ \mu$ be a deficient topological measure such that
 $\mu(A) = 1$ if $ D \se A$ and $\mu(A) = 0$ otherwise, for any $A \in \ox \cup \cx$. (See \cite[Section 6]{Butler:DTMLC}).
Let $f = (|x| -2 )  \wedge 0$, so 
$f(X) = [-2,0]$, $Coz(f) = (-2, 2)$ and $\mu(Coz(f)) = 0$. 
Note that 
$R_{2, \mu, f}(t) = 1$ if $t \in [-2, -1]$ and $R_{2, \mu, f}(t) =  0$ if $ t \in (-1, 0]$. 
By formula (\ref{RFint}) we have: 
$$ \int_X f d \mu = \rf(f) = -2 \mu(X) + \int_{-2}^0 R_{2, \mu, f}(t) dt = -1.$$
Now let $g= -f$, so $g(X) =[0,2]$.
Since $R_{2, \mu, g}(t) = 0$ for every $t>0$, 
we have
$$ \int_X (-f) d \mu = \int_0^2  R_{2, \mu, g}(t) dt = 0.$$
Thus, 
$$   \int_X f d \mu = -1, \ \ \ \ \  \int_X (-f) d \mu = 0.$$
\end{example} 

\begin{theorem} \label{Zfintn}
Suppose $X$ is locally compact, $\mu$ is a deficient topological measure on $X$, $ f \in C_b(X)$. 
Then
\begin{enumerate}[label=(y\arabic*),ref=(y\arabic*)]  
\item \label{zf3} 
If $ f \ge 0$,  then for any open set $V$ with $\mu(V) < \infty$  
$$ \int_V f d \mu = \int_{V \cap Coz(f)} f d \mu,$$ 
and
$$ \int_V f d \mu =0 \Longleftrightarrow \mu(V \cap Coz(f)) =0.$$
In particular, for a finite $\mu$ we have $ \int_X f d \mu = \int_{Coz(f)} f d \mu,$
and
$$ \int_X f d\mu = 0 \Longleftrightarrow   \int_{Coz(f)}  f d\mu = 0 \Longleftrightarrow \mu(Coz(f)) = 0.$$
\item  \label{zf5} 
If  $V \in \ox, \mu(V) < \infty$, then 
$$\mu_V( Z(f)) = \mu(V) \Longrightarrow \int_V f d \mu =0.$$ 
In particular, if $\mu$ is finite and $\mu(Z(f)) = \mu(X)$, then $\int_X f d\mu = 0$.
\item \label{zf6} 
If $X$ is compact, $V \in \ox $, and $ f \le 0$ then 
$$\mu_V( Z(f)) = \mu(V) \Longleftrightarrow \int_V f d \mu =0.$$
In particular,  
$$ \int_X f d\mu = 0 \Longleftrightarrow \mu(Z(f)) =\mu(X).$$
\end{enumerate}
\end{theorem}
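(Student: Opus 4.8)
The common strategy is to express each integral through the distribution functions of Remark~\ref{RemBRT} via formulas (\ref{IntV}) and (\ref{IntVC+}), and to exploit the single elementary observation that $f^{-1}((t,\infty))\se Coz(f)$ whenever $t>0$. The only facts I need about $\mu$ and about the restricted deficient topological measure $\mu_V$ from Theorem~\ref{restUdtm} are: monotonicity (immediate from superadditivity in Remark~\ref{tausm}), superadditivity itself, $\tau$-smoothness on open sets, and outer regularity of $\mu_V$ on closed sets. Throughout I fix $f(X)\se[a,b]$, choosing $a\le 0\le b$.

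For part~\ref{zf3}, take $f\ge 0$ with $f(X)\se[0,b]$. Since $f^{-1}((t,\infty))\se Coz(f)$ for $t>0$, I would note that $\mu_V(f^{-1}((t,\infty)))=\mu_{V\cap Coz(f)}(f^{-1}((t,\infty)))$, so the integrands $R_{1,\mu_V,f}$ and $R_{1,\mu_{V\cap Coz(f)},f}$ coincide for $t>0$; formula (\ref{IntVC+}) then gives $\int_V f\,d\mu=\int_{V\cap Coz(f)}f\,d\mu$. Writing $W=V\cap Coz(f)$, the integrand $R_{1,\mu_W,f}$ is nonnegative and nonincreasing, and since $f>0$ on $W$ the sets $W\cap f^{-1}((t,\infty))$ increase to $W$ as $t\searrow 0$, so $\tau$-smoothness gives $R_{1,\mu_W,f}(t)\to\mu(W)$. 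Hence $\int_W f\,d\mu=0$ forces $R_{1,\mu_W,f}\equiv 0$ on $(0,b]$ and therefore $\mu(W)=0$, while conversely $\mu(W)=0$ makes every value of $R_{1,\mu_W,f}$ vanish by monotonicity. The ``in particular'' clauses are the case $V=X$.

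For part~\ref{zf5}, note that $Z(f)$ (closed) and $Coz(f)$ (open) partition $X$, so superadditivity of $\mu_V$ yields $\mu_V(X)\ge\mu_V(Z(f))+\mu_V(Coz(f))$; since $\mu_V(Z(f))=\mu(V)=\mu_V(X)$ by hypothesis and part~\ref{muax} of Theorem~\ref{RestrDTsv}, this forces $\mu_V(Coz(f))=0$. Now I would evaluate $R_{1,\mu_V,f}(t)=\mu_V(f^{-1}((t,\infty)))$: for $t>0$ we have $f^{-1}((t,\infty))\se Coz(f)$, so monotonicity gives $R_{1,\mu_V,f}(t)=0$; for $t<0$ we have $Z(f)\se f^{-1}((t,\infty))$, so monotonicity together with $\mu_V(Z(f))=\mu(V)$ gives $R_{1,\mu_V,f}(t)=\mu(V)$. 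Substituting into (\ref{IntV}) yields $\int_V f\,d\mu=a\mu(V)+\int_a^0\mu(V)\,dt=0$. The forward implication of part~\ref{zf6} is exactly this statement applied to $f\le 0$.

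For the converse in part~\ref{zf6}, assume $X$ compact, $f\le 0$ with $f(X)\se[a,0]$ (the case $a=0$ being trivial), and $\int_V f\,d\mu=0$. Since $f^{-1}((t,\infty))=\O$ for $t>0$, formula (\ref{IntV}) reads $\int_V f\,d\mu=a\mu(V)+\int_a^0 R_{1,\mu_V,f}(t)\,dt$, and because $0\le R_{1,\mu_V,f}(t)\le\mu_V(X)=\mu(V)$ the vanishing of the integral forces $R_{1,\mu_V,f}(t)=\mu(V)$ for almost every, hence (by monotonicity of the nonincreasing $R_{1,\mu_V,f}$) for every $t\in(a,0)$. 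The main obstacle is to pass from this to $\mu_V(Z(f))=\mu(V)$, and this is where compactness of $X$ is essential. For an open $U\supseteq Z(f)$, the set $X\sm U$ is compact and $f$ is continuous and strictly negative there, hence $\sup_{X\sm U}f<0$; choosing $t\in(\sup_{X\sm U}f,0)$ gives $f^{-1}((t,\infty))\se U$, so the sets $f^{-1}((t,\infty))$, $t\in(a,0)$, form a neighborhood base of $Z(f)$. Outer regularity of $\mu_V$ then gives $\mu_V(Z(f))=\inf_{t\in(a,0)}\mu_V(f^{-1}((t,\infty)))=\mu(V)$. Again the ``in particular'' statement is the case $V=X$, where $\mu_X=\mu$.
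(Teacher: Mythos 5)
Your proof is correct. For parts \ref{zf3} and \ref{zf5} it is essentially the paper's own argument: the identity $R_{1,\mu_V,f}(t)=R_{1,\mu_{V\cap Coz(f)},f}(t)$ for $t>0$, superadditivity forcing $\mu_V(Coz(f))=0$, and the evaluation $R_{1,\mu_V,f}\equiv\mu(V)$ on $(a,0)$ all appear there; where the paper cites right-continuity of $R_1$ at $0$ from \cite[Lemma 6.3]{Butler:ReprDTM}, you re-derive it from $\tau$-smoothness on open sets (Remark \ref{tausm}), which is the same fact. The genuine divergence is in the converse implication of part \ref{zf6}. The paper works with $R_2$: it rewrites $\int_V f\,d\mu=\int_a^0\bigl(R_{2,\mu_V,f}(t)-\mu(V)\bigr)\,dt$, notes the integrand is nonpositive and nonincreasing, and invokes left-continuity of $R_{2,\mu_V,f}$ at $0$ (\cite[Lemma 6.3(V)]{Butler:ReprDTM}, valid precisely because $X$ is compact --- in essence $\tau$-smoothness on the compact sets $f^{-1}([t,\infty))$, which decrease to $Z(f)$ as $t\nearrow 0$) to get $\mu_V(Z(f))=R_{2,\mu_V,f}(0)=\mu(V)$ in one step. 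You instead work with $R_1$: after deducing $R_{1,\mu_V,f}\equiv\mu(V)$ on $(a,0)$ from the a.e.\ statement plus monotonicity of the nonincreasing $R_1$, you show that the open sets $f^{-1}((t,\infty))$, $t\in(a,0)$, are cofinal among open neighborhoods of $Z(f)$ --- this is where compactness enters, via $\sup_{X\sm U}f<0$ for open $U\supseteq Z(f)$ --- and conclude by outer regularity of $\mu_V$. Both mechanisms are sound; the paper's is shorter because the continuity lemma is already available in \cite{Butler:ReprDTM}, while yours is self-contained, avoids $R_2$ altogether, and makes explicit the exact point at which compactness of $X$ is needed.
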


\begin{proof}
\begin{enumerate}[label=(y\arabic*),ref=(y\arabic*)]  
\item
Let $ f \in C_b(X), f \ge 0$.
Take any $V \in \ox$ with $ \mu(V) < \infty$, and let $U = V \cap Coz(f)$. 
For $t>0$ we have 
$$U \cap f^{-1}((t, \infty)) = V \cap Coz(f)  \cap f^{-1}((t, \infty)) = V \cap f^{-1}((t, \infty)),$$  
so from (\ref{IntCR}) and ( \ref{IntVC+}) we see that   
$ R_{1, \mu_U, f}(t) = R_{1, \mu_V, f}(t), $ and $ \int_V f \, d\mu = \int_{V \cap Coz(f)} f \, d\mu$.
Now suppose 
 $\int_V f d \mu = \int_0^b R_{1, \mu_V, f}(t) = 0.$
Since the integrand in the second integral is nonnegative, using the right-continuity of $R_{1, \mu_V,f}(t)$ 
(see \cite[Lemma 6.3]{Butler:ReprDTM}),
we must have  $\mu(V \cap Coz (f) ) = R_{1, \mu_V,f}(0) = 0$. 
On the other hand, if $ \mu(V \cap Coz(f)) = 0$, then $R_{1, \mu_V,f}(t) = 0$ for every $t>0$, and by  formula (\ref{IntV})
$\int_V f d \mu = 0$.
\item
Suppose  $f(X) \se [a,b], \, a \le 0, b \ge 0$, and $\mu_V(Z(f)) = \mu(V)< \infty$. 
Using part \ref{muax} of Theorem \ref{RestrDTsv} and superadditivity (see Remark \ref{tausm}), we have
$ \mu(V) = \mu_V(X) \ge \mu_V(Z(f)) + \mu_V(Coz(f)), $ 
and so we must have $ \mu_V(Coz(f)) = \mu(V \cap Coz(f)) = 0$.
With $W =(f^{-1}(t, \infty)) \cap Coz(f)$, we have: $ \mu_V(W) = 0$.
Since  $f^{-1}((t, \infty)) \cap Z(f) = \O$ for $t\ge0$ and  $(f^{-1}(t, \infty)) \cap Z(f) = Z(f)$ for $t < 0$, we see that
 $f^{-1}((t, \infty))  = W$ if $ t \ge 0$, and $f^{-1}((t, \infty)) = Z(f) \sc W $ if $t < 0$.
Then $R_{1, \mu_V, f}(t)  = \mu_V(W) = 0$ for $t \ge 0$ and 
$R_{1, \mu_V, f}(t)  = \mu_V(f^{-1}((t, \infty))) \ge \mu_V(Z(f)) + \mu_V(W) =  \mu_V(Z(f)) = \mu(V) $ for $t <0$. 
Since also  $R_{1, \mu_V, f}(t)  \le \mu(V)$, we have  $R_{1, \mu_V, f}(t)  = \mu(V)$  for $t <0$.
Then
$$ \int_V f d\mu =a\mu(V) + \int_a^b R_{1, \mu_V, f}(t) = a\mu(V) + \int_a^0 \mu(V)  dt =0.$$
\item
Suppose that $X$ is compact.
($\Longrightarrow$)  is given by part \ref{zf5}. 
($\Longleftarrow$) 
Assume that $ \int_Vf d \mu =0$, i.e. by  formula (\ref{IntV})
\begin{align*} 
\int_V f  d\mu =  a \mu (V) +  \int_a^0  R_{2, \mu_V, f}(t) dt = \int_a^0 ( R_{2, \mu_V, f}(t)  - \mu(V)) dt=0.
\end{align*}
The integrand function in the last integral is nonpositive. 
Since $X$ is compact, by \cite[Lemma 6.3(V)]{Butler:ReprDTM}
$R_{2, \mu_V, f}(t)$ is left-continuous at $0$. From left continuity it follows that $R_{2, \mu_V, f}(0) - \mu(V) = 0$. 
But $R_{2, \mu_V, f}(0)  = \mu_V(Z(f))$. 
\end{enumerate}
\end{proof}

\begin{lemma}  \label{zf8}
If $ \mu$ assumes only finitely many values and is not a topological measures then there exists $f \ge 0$ such that 
$\int_X f d\mu = 0, \, \int_X (-f) d\mu <0$.
\end{lemma}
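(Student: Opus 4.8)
The plan is to convert the statement into a set-theoretic problem about $\mu$ and then realize the required sets by a single continuous function. First note that, since $\mu$ takes finitely many (finite) values and deficient topological measures are monotone, $\mu$ is finite, so part \ref{zf3} of Theorem \ref{Zfintn} applies: for $f\ge 0$ one has $\int_X f\,d\mu=0$ \emph{iff} $\mu(Coz(f))=0$. Thus it suffices to build $g\in C_c^+(X)\se C_0^+(X)$ with $\mu(Coz(g))=0$ — which forces $\int_X g\,d\mu=0$ — together with the extra feature that the sublevel sets $\{g\le s\}$ have $\mu(\{g\le s\})<\mu(X)$ on an interval of $s$; by formula (\ref{IntV}) the latter makes $\int_X(-g)\,d\mu=-\mu(X)+\int_0^1\mu(\{g\le s\})\,ds$ strictly negative. (When $X$ is compact this second condition is just $\mu(Z(g))<\mu(X)$, and one may instead quote part \ref{zf6} of Theorem \ref{Zfintn}.)

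Next I extract the failure of additivity. As $\mu$ is not a topological measure, Theorem \ref{DTMtoTM} gives a strict defect: a compact $C$ and open $U$ with $C\se U$ and $\mu(U)>\mu(C)+\mu(U\setminus C)$. Using inner compact regularity and finitely-many-valuedness, the supremum defining $\mu(U\setminus C)$ is \emph{attained} by a compact $K_1\se U\setminus C$ with $\mu(K_1)=\mu(U\setminus C)$; setting $L=C\sc K_1$, finite additivity on compacts gives $\mu(L)=\mu(C)+\mu(U\setminus C)<\mu(U)\le\mu(X)$, while maximality of $K_1$ forces $\mu\big((U\setminus C)\setminus K_1\big)=0$ (any disjoint compact $K'\se(U\setminus C)\setminus K_1$ obeys $\mu(K')+\mu(K_1)=\mu(K'\sc K_1)\le\mu(U\setminus C)$, so $\mu(K')=0$, and inner regularity upgrades this to the open remainder). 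Combining this with additivity of a closed set and a disjoint compact set and with superadditivity (Remark \ref{tausm}), I obtain a compact $L$ with $\mu(L)<\mu(X)$ whose open complement $X\setminus L$ is $\mu$-null. In the locally compact case one first uses inner regularity of $X$ (again attained, by finitely-many-valuedness) to place all of $\mu$'s mass on a compact set, then absorbs the mass of $X\setminus U$ into $L$ by the same maximal-compact trick.

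With such an $L$ I produce $g$. By outer regularity the infimum defining $\mu(L)$ is attained by an open $O\supseteq L$ with $\mu(O)=\mu(L)<\mu(X)$, and by Lemma \ref{easyLeLC} I may shrink $O$ so that $X\setminus O$ is compact and sits inside the $\mu$-null open set $X\setminus L$. A Urysohn function then gives $g\in C_c^+(X)$, $0\le g\le 1$, with $g\equiv 1$ on the compact set $X\setminus O$ and $\supp g\se X\setminus L$. Then $Coz(g)\se X\setminus L$, so $\mu(Coz(g))=0$ and $\int_X g\,d\mu=0$. Moreover $g\equiv 1$ on $X\setminus O$ gives $\{g\le s\}\se O$ for every $s<1$, whence $\mu(\{g\le s\})\le\mu(O)<\mu(X)$, and formula (\ref{IntV}) yields $\int_X(-g)\,d\mu=-\mu(X)+\int_0^1\mu(\{g\le s\})\,ds\le -\mu(X)+\mu(O)<0$, as required.

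The additivity/regularity bookkeeping and the Urysohn step are routine; the crux is Step two — producing a compact $L$ that \emph{simultaneously} has a $\mu$-null open complement and still satisfies $\mu(L)<\mu(X)$. This is exactly where non-topological-measure-ness and finiteness of the value set must cooperate: the strict defect from Theorem \ref{DTMtoTM} guarantees a genuine gap $\mu(X)-\mu(L)>0$, while finitely many values guarantees that the suprema and infima in inner and outer regularity are attained, so the gap is witnessed by honest compact and open sets rather than only in a limit. The main delicacy in the non-compact case is ensuring that absorbing the mass outside $U$ into $L$ does not erase the strict inequality $\mu(L)<\mu(X)$; concentrating $\mu$ on a compact core first is what keeps this under control.
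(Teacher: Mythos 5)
When $X$ is compact your argument is correct, and there it is essentially the paper's own proof in slightly different packaging: the defect pair $C\subseteq U$ from Theorem \ref{DTMtoTM}, attainment of the regularity supremum/infimum because $\mu$ takes finitely many values, the maximal compact $K_1\subseteq U\setminus C$ making the open remainder $(U\setminus C)\setminus K_1$ null, adjunction of $X\setminus U$ (compact in this case), an attained outer approximation $O$, and a Urysohn function, with the two integral evaluations done exactly as in the paper (part \ref{zf3} of Theorem \ref{Zfintn} for $\int_X g\,d\mu=0$, and formula (\ref{IntV}) for $\int_X(-g)\,d\mu<0$).

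The locally compact non-compact case, which you rightly flag as the crux, is where the proposal has genuine gaps, and your sketched fixes do not close them. First, insisting that $L$ be \emph{compact} creates a dilemma. If you absorb the mass of $X\setminus U$ by adjoining $K_0\cap(X\setminus U)$ (with $K_0$ a compact carrying all of $\mu$'s mass), then $X\setminus L=\left[(U\setminus C)\setminus K_1\right]\cup\left[(X\setminus U)\setminus K_0\right]$ is a union of two $\mu$-null sets, and you cannot conclude it is null: deficient topological measures are not subadditive, and a compact subset of this union need not decompose into compact pieces lying in the two parts, so nothing in the axioms bounds its measure. If instead you apply your maximal-compact trick to the open set $X\setminus L$ (adjoining a compact $K_2\subseteq X\setminus L$ with $\mu(K_2)=\mu(X\setminus L)$), the complement does become null, but the strict inequality is lost: superadditivity gives only $\mu(L)+\mu(X\setminus L)\le\mu(X)$, and equality can hold (the defect missing inside $U$ may ``reappear'' in $X\setminus L$), so $\mu(L\sqcup K_2)$ may equal $\mu(X)$, destroying the gap $\delta>0$ on which the final estimate rests. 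Second, and independently, your Urysohn step needs $X\setminus O$ compact, and the appeal to Lemma \ref{easyLeLC} is backwards: shrinking $O$ only enlarges $X\setminus O$, and that lemma produces relatively compact open sets, never co-compact ones. The paper's proof is engineered precisely to avoid both problems: it does \emph{not} take $L$ compact, but works with the \emph{closed} set $G=(X\setminus U)\sqcup B\sqcup C$ and computes $\mu(G)=\mu(X\setminus U)+\mu(B)+\mu(C)<\mu(X)$ directly from the additivity of a closed set and disjoint compact sets (Remark \ref{tausm}) -- no unions of null sets, no lost strict inequality; and it first arranges, WLOG (using attained regularity together with Lemma \ref{easyLeLC}), that $\overline{U}$ is compact, so that $K=X\setminus W\subseteq X\setminus G\subseteq U$ is automatically compact and the $C_c(X)$ Urysohn function exists. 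To repair your proof in the non-compact case you would need to import both of these devices, at which point it becomes the paper's argument.
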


\begin{proof}
Suppose  that $\mu$ is a deficient topological measure, but not a topological measure and that $ \mu$ assumes finitely many values.
By Theorem \ref{DTMtoTM} there are $U \in \ox$ and $C \in \kx, C \se U$ such that $\mu(C)  + \mu(U \sm C) < \mu(U)$.
We may assume that $\cl U$ is compact. (Otherwise, choose by regularity compact $D \se U$ with $ \mu(D) = \mu(U)$, 
then using regularity and Lemma \ref{easyLeLC} choose $U_1 \in \ox$ such that $ \cl{U_1} \in \kx, \,  D \se U_1 \se U, $ and $\mu(U_1) = \mu(D) = \mu(U)$. 
Then $ \mu(C) + \mu(U_1 \sm C) \le \mu(C) + \mu(U \sm C) < \mu(U) = \mu(U_1)$, and we replace $U$ by $U_1$.)
We have
\begin{align} \label{vspom}
 \mu(C) + \mu(U \sm C) + \mu(X \sm U) < \mu(U) + \mu(X \sm U) \le X.
\end{align}
Pick a compact $B \se U \sm C$ with $ \mu(B) = \mu(U \sm C)$. Let $ V = (U \sm C) \sm B$. By superadditivity $ \mu(V) = 0$.
Set a closed set $ G = (X \sm U) \sc B \sc C$. By Remark \ref{tausm} 
$ \mu(G)  = \mu(X \sm U) + \mu(B) + \mu(C) $, which is equal to the left-hand side of (\ref{vspom}), so $ \mu(G) < \mu(X)$.
Now choose $ W \in \ox$ such that $ G \se W, \, \mu(W) = \mu(G)$. Let $\delta = \mu(X) - \mu(W)>0$.

Let $K = X \sm W$. Then $ K \se X \sm G =V \se U$, so $ K$ is compact.
Let $f \in \fcsx$ be a Urysohn function such that $ f=1$ on $K$ and $ \supp f \se V$.
Then $Coz(f) \se V$, so $ \mu(Coz(f) =0$, and by part \ref{zf3} of Theorem \ref{Zfintn}  $ \int_X f \, d\mu = 0$.

Now let $g= -f$, so $ g(X) \se [-1,0]$. For $t \in (-1,0)$ we have $g^{-1}((t, \infty)) \se g^{-1}((-1, \infty)) \se X \sm K =W$, so 
$R_{1, \mu, g}(t) \le \mu(W)$.
Then by formula (\ref{rfform})
$$
 \int_X g \, d\mu = -\mu(X) + \int_{-1}^0 R_{1, \mu, g}(t) dt \le  \int_{-1}^0 (\mu(W) - \mu(X)) dt =-\delta < 0.
$$
\end{proof}

\begin{remark}
Example \ref{Zfint} illustrates Lemma \ref{zf8}.
Also, in Example \ref{Zfint}  $\int_X f d \mu= -1$, while $ \mu(Coz(f)) =0$. This shows that part \ref{zf3}  of Theorem \ref{Zfintn} may not hold 
if the condition $ f \ge 0$ is relaxed.
\end{remark} 

\begin{remark}
Parts \ref{svMUFsup} - \ref{svConeLin} of Theorem \ref{RestrDTsv} for the compact space are stated without proof in \cite[Proposition 5.2]{Svistula:Integrals}.
Some statements from Theorem \ref{RaNiDTM} and Lemma \ref{RaNiReg} are related to \cite[Theorem 5.4]{Svistula:Integrals}. 
Parts \ref{rgA} - \ref{rgInfSup}, \ref{lagLinComb} - \ref{absCont} of Theorem \ref{rgPr} are generalizations of results 
presented in \cite[Theorem 22]{Svistula:DTM} and \cite[Theorem 5.7, Proposition 5.9]{Svistula:Integrals}; for  part \ref{withC} of Theorem \ref{rgPr}
see also  \cite[Theorem 5.7]{Svistula:Integrals}. 
Theorem \ref{intK} is inspired by \cite[Theorem 19]{Svistula:DTM}.
Statements "In particular..."  in Theorem \ref{Zfintn} are generalizations of  \cite[Theorem 3.6, 1-3]{Svistula:Integrals}, 
and Lemma \ref{zf8} generalizes  \cite[Theorem 3.6, 4]{Svistula:Integrals}.  
\end{remark}

\section{Convergence theorems} \label{MCT}

\begin{theorem} \label{MCTdtm}
Let $X$ be a locally compact space, $\mu$ a finite deficient topological measure on $X$. 
\begin{enumerate}[label=(\Roman*),ref=(\Roman*)]  
\item
Suppose $f_{\alpha} \in C_b(X)$ and $f_{\alpha} \nearrow f$ in the topology of uniform convergence. Then 
\[ \int_X f_{\alpha} \, d \mu \longrightarrow \int_X f \, d \mu.\]
\item
Suppose $f_{\alpha} \in \fvixP$ and $f_{\alpha} \searrow f$ in the the topology of uniform convergence. Then 
\[ \int_X f_{\alpha} \, d \mu \longrightarrow \int_X f \, d \mu.\]
\end{enumerate}
\end{theorem}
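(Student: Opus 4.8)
The plan is to derive both statements from a single Lipschitz estimate for the quasi-integral, so that monotone uniform convergence reduces to continuity of the functional $\mathcal{R}_\mu$. First I would check that in each case the limit lies in the domain of the quasi-integral. In part (I), a uniform limit of continuous functions is continuous, and since $\|f - f_{\alpha_0}\|$ is finite for some $\alpha_0$ while $f_{\alpha_0} \in C_b(X)$, the limit $f$ is bounded; hence $f \in C_b(X)$ and $\int_X f\,d\mu = \mathcal{R}_\mu(f)$ is defined by formula (\ref{rfform}) (here finiteness $\mu(X)<\infty$ is used). In part (II), a uniform limit of nonnegative functions vanishing at infinity is again nonnegative, continuous, and vanishing at infinity, so $f \in C_0^+(X)$ and $\int_X f\,d\mu = \mathcal{R}_\mu(f)$ is defined by (\ref{rfformp}).

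The key step is the estimate $|\mathcal{R}_\mu(f) - \mathcal{R}_\mu(h)| \le \mu(X)\,\|f - h\|$ for all $f, h \in C_b(X)$. To obtain it I would first record the constant-shift identity $\mathcal{R}_\mu(h + c) = \mathcal{R}_\mu(h) + c\,\mu(X)$ for a constant $c$: since $R_{1,\mu,h+c}(t) = \mu((h+c)^{-1}((t,\infty))) = R_{1,\mu,h}(t-c)$, the substitution $s = t - c$ in (\ref{rfform}) yields the identity. Combining this with the monotonicity of $\mathcal{R}_\mu$ (part \ref{RHOsvva} of Remark \ref{RemBRT}, which is also immediate from (\ref{rfform})) and the two-sided bound $h - \|f-h\| \le f \le h + \|f-h\|$ gives $\mathcal{R}_\mu(h) - \|f-h\|\,\mu(X) \le \mathcal{R}_\mu(f) \le \mathcal{R}_\mu(h) + \|f-h\|\,\mu(X)$, which is the desired Lipschitz bound. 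The main obstacle is really just this verification, i.e. isolating the constant-shift identity and pairing it with monotonicity; everything else is bookkeeping about the domain.

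With the estimate in hand, both parts are immediate: since $f_\alpha \to f$ in the uniform norm, $\|f_\alpha - f\| \to 0$, and therefore
\[ \Big| \int_X f_\alpha\,d\mu - \int_X f\,d\mu \Big| = |\mathcal{R}_\mu(f_\alpha) - \mathcal{R}_\mu(f)| \le \mu(X)\,\|f_\alpha - f\| \longrightarrow 0. \]
Note that this argument uses only uniform convergence and not the monotonicity of the net; the monotonicity merely guarantees that the numerical net $\int_X f_\alpha\,d\mu$ is itself monotone (increasing in (I), decreasing in (II)), matching the classical monotone convergence picture.

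As an alternative that exploits the monotonicity directly, one could argue at the level of the distribution functions. In (I) the open sets $f_\alpha^{-1}((t,\infty))$ increase to $f^{-1}((t,\infty))$, so $\tau$-smoothness on open sets (Remark \ref{tausm}) gives $R_{1,\mu,f_\alpha}(t) \nearrow R_{1,\mu,f}(t)$, and Theorem \ref{Bogachev} applied to these lower semicontinuous (Remark \ref{LSCfn}) integrands finishes it; in (II) the compact sets $f_\alpha^{-1}([t,\infty))$ decrease to $f^{-1}([t,\infty))$ for $t>0$, so $\tau$-smoothness on compact sets gives $R_{2,\mu,f_\alpha}(t) \searrow R_{2,\mu,f}(t)$, and Corollary \ref{NetInt} then yields the conclusion via (\ref{rfformp}). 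I expect the Lipschitz route to be shorter and more uniform across the two cases, so I would use it as the main argument and leave this second approach as a remark.
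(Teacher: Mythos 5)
Your proof is correct, but your main argument takes a genuinely different route from the paper's. The paper argues through the distribution functions: by $\tau$-smoothness (Remark \ref{tausm}) the open level sets $f_\alpha^{-1}((t,\infty))$ increase to $f^{-1}((t,\infty))$ (respectively, the compact sets $f_\alpha^{-1}([t,\infty))$ decrease to $f^{-1}([t,\infty))$ for $t>0$), so $R_{1,\mu,f_\alpha}(t)\to R_{1,\mu,f}(t)$ (respectively $R_{2}$), and the limit is then exchanged with $\int_a^b \, dt$ --- i.e., precisely the ``alternative'' you sketch in your final paragraph; in fact your sketch makes explicit the net-interchange step (Theorem \ref{Bogachev} / Corollary \ref{NetInt}) that the paper's own two-line proof leaves implicit. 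Your primary route instead establishes the global Lipschitz bound $|\mathcal{R}_\mu(f)-\mathcal{R}_\mu(h)|\le\mu(X)\,\|f-h\|$ on $C_b(X)$ from the constant-shift identity together with monotonicity, and this is sound: it is essentially the mechanism the paper reserves for its \emph{other} convergence theorem (the last theorem of Section \ref{MCT}, proved from parts \ref{rgrhohP} and \ref{rgrhoh} of Theorem \ref{rgPr}). A point in your favor: since part \ref{RHOsvva} of Remark \ref{RemBRT} states some of its properties only on $C_0(X)$, and nonzero constants do not belong to $C_0(X)$ when $X$ is noncompact, your decision to re-derive the shift identity and monotonicity directly from formula (\ref{rfform}) (valid on all of $C_b(X)$ because $\mu$ is finite, with monotonicity of $\mu$ on open sets coming from \ref{DTM2}) is exactly what makes the estimate legitimate. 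As for what each approach buys: yours shows the monotonicity hypothesis is redundant --- uniform convergence of the net plus finiteness of $\mu$ already suffice --- whereas the paper's $\tau$-smoothness argument uses monotonicity essentially but would survive weakening uniform convergence to pointwise monotone convergence (provided the limit is assumed to lie in $C_b(X)$, resp.\ $C_0^+(X)$). So the two proofs strengthen the stated theorem in complementary directions.
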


\begin{proof}
\begin{enumerate}[label=(\Roman*),ref=(\Roman*)]
\item
Since $f \in C_b(X)$, we may consider $\int  f \, d \mu$.
For any $t$, $f_{\alpha}^{-1}((t, \infty)) \nearrow f^{-1}((t, \infty))$, so by Remark  \ref{tausm}
$\mu( f_{\alpha}^{-1}((t, \infty))) \rightarrow  \mu(f^{-1}((t, \infty)))$. i.e. $R_{1,\mu,  f_{\alpha}} (t) \rightarrow R_{1,\mu, f} (t)$.
Applying formula (\ref{rfform}) on $[a,b]$ containing $f(X)$ we see that  
\[ \int_X f_{\alpha} \, d \mu \rightarrow \int_X f \, d \mu.\]
\item
The proof is similar. Note that for any $t >0$ the set $f_{\alpha}^{-1}([t, \infty))$ is compact,  $f_{\alpha}^{-1}([t, \infty)) \searrow f^{-1}([t, \infty))$, and we may 
again apply Remark  \ref{tausm}.  
\end{enumerate}
\end{proof} 

\begin{theorem}
Let $\mu$ be a compact-finite deficient topological measure on  a locally compact space $X$. 
Suppose  ${f_{\alpha}}$ converges uniformly to $f$, $f_{\alpha}, f \in \fcsx, f_{\alpha} \ge 0$.  If $\supp f_{\alpha} ,\supp f \se K$ for some compact $K$, then 
$ \int_X f_{\alpha} \, d \mu  \longrightarrow  \int_X f \, d \mu$. 
If $X$ is compact, $f_{\alpha} \in C(X)$ and  ${f_{\alpha}}$ converges uniformly to $f$ then 
$ \int_X f_{\alpha} \, d \mu  \longrightarrow  \int_X f \, d \mu$.
\end{theorem}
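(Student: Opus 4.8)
The plan is to derive both assertions directly from the Lipschitz continuity of the quasi-integral already established in Theorem \ref{rgPr}, since each statement merely says that the functional $f \mapsto \int_X f \, d\mu$ is continuous under uniform convergence of functions whose supports stay inside a fixed compact set. The strategy in both cases is to identify the integral over $X$ with the appropriate functional $\rf$ and then feed uniform convergence into the previously proven Lipschitz estimate.

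For the first part the functions $f_\alpha, f$ are nonnegative, lie in $\fcsxP$, and have support inside the fixed compact set $K$. I would first reduce the integral over $X$ to the functional $\rf_{\mu_K}$. Since $\supp f_\alpha \se K$, for every $t > 0$ the level set $f_\alpha^{-1}([t,\infty))$ is compact and contained in $K$, so by Theorem \ref{DTMmuF} (using $\mu_K(C) = \mu(K \cap C)$ for closed $C$) we get $R_{2,\mu,f_\alpha}(t) = \mu(f_\alpha^{-1}([t,\infty))) = \mu(K \cap f_\alpha^{-1}([t,\infty))) = R_{2,\mu_K,f_\alpha}(t)$. Integrating in $t$ via formula (\ref{IntVC+}) yields $\int_X f_\alpha \, d\mu = \rf_{\mu_K}(f_\alpha)$, and similarly $\int_X f \, d\mu = \rf_{\mu_K}(f)$. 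Because $\mu$ is compact-finite, $\mu_K$ is finite, so part \ref{rgrhohP} of Theorem \ref{rgPr} applies and gives
$$\Big| \int_X f_\alpha \, d\mu - \int_X f \, d\mu \Big| = |\rf_{\mu_K}(f_\alpha) - \rf_{\mu_K}(f)| \le \|f_\alpha - f\| \, \mu(K),$$
which tends to $0$ by uniform convergence.

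For the second part $X$ is compact, hence $\mu$ is automatically finite and $\int_X h \, d\mu = \rf_\mu(h) = \nu_h(X)$ for every $h \in C(X)$ by Definition \ref{IntOverA} with $A = X$; here no sign or support restriction is required. I would invoke the Lipschitz continuity of part \ref{rgrhoh} of Theorem \ref{rgPr} with $A = X$ to obtain
$$\Big| \int_X f_\alpha \, d\mu - \int_X f \, d\mu \Big| = |\nu_{f_\alpha}(X) - \nu_f(X)| \le \mu(X) \, \|f_\alpha - f\| \longrightarrow 0.$$

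The only genuinely non-mechanical step is the support reduction $\int_X f_\alpha \, d\mu = \rf_{\mu_K}(f_\alpha)$ in the first part: one must verify that replacing $\mu$ by the restricted deficient topological measure $\mu_K$ leaves the quasi-integral of a function supported in $K$ unchanged, so that the finite Lipschitz constant $\mu(K)$ becomes available. Everything afterward is a direct application of the previously proven Lipschitz estimates combined with the hypothesis of uniform convergence.
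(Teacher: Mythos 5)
Your proof is correct and takes essentially the same route as the paper, whose entire proof is the one-line citation of the Lipschitz-continuity parts \ref{rgrhohP} and \ref{rgrhoh} of Theorem \ref{rgPr}. The only difference is that you explicitly verify the support-reduction identity $\int_X f_\alpha \, d\mu = \rf_{\mu_K}(f_\alpha)$ (via equality of the level-set functions $R_{2,\mu,f_\alpha}$ and $R_{2,\mu_K,f_\alpha}$ for $t>0$), a step the paper leaves implicit.
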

 
\begin{proof}
Follows from parts  \ref{rgrhohP} and \ref{rgrhoh} of Theorem \ref{rgPr}.
\end{proof}

{\bf{Acknowledgments}}:
The author would like to thank the Department of Mathematics at the University of California Santa Barbara for its supportive environment.  

%




\begin{thebibliography}{widest}

\bibitem{Aarnes:TheFirstPaper}
 J. F. Aarnes, {\em Quasi-states and quasi-measures}, Adv. 
  Math. {\bf 86}, no. 1 (1991), 41--67.   
  
\bibitem{Aarnes:Pure}
Aarnes J. Pure quasi-states and extremal quasi-measures.
 Math. Ann. {\bf 295} (1993),  575--588.  
  
\bibitem{Aarnes:ConstructionPaper}
  J. F. Aarnes, {\em Construction of non-subadditive measures and 
  discretization of Borel measures}, Fund. Math. {\bf 147}
  (1995), 213--237.
  
  \bibitem{AarnesButler}
  J. F. Aarnes, S. V. Butler, {\em Super-measures and finitely
  defined topological measures} Acta Math. Hungar. {\bf 99} (1-2) (2003), 33--42.  
  
 \bibitem{Bogachev}
  V. I. Bogachev, {\em Measure Theory}, vol. 2,  Regular and Chaotic Dynamics, Izhevsk 2003, 
  English transl., Springer-Verlag, Berlin, 2007. 
       
\bibitem{Butler:WaysLC}
  S. ~Butler. 
  {\em Ways of obtaining topological measures on locally compact spaces}
  Bull. of Irkutsk State Univ., Series "Mathematics" {\bf 25} (2018), 33--45.
      
\bibitem{Butler:DTMLC}
  S. V. Butler, {\em Deficient topological measures on locally compact spaces}, arXiv: 1902.02458      
  
\bibitem{Butler:QLFLC}
  S. V. Butler, {\em Quasi-linear functionals on locally compact spaces}, arXiv: 1902.03358  
  
\bibitem{Butler:ReprDTM}  
  S. V. Butler, {\em Non-linear functionals, deficient topological measures, and representation theorems on locally compact spaces},  arXiv: 1902.05692    
      
\bibitem{Butler:STMLC}
  S. Butler, {\em Signed topological measures on locally compact spaces}, arXiv: 1902.07412    
  
\bibitem{Butler:Decomp}   
  S. V. Butler, {\em Decompositions of signed deficient topological measures}, arXiv: 1902.07868  
   
\bibitem{Dugundji}
Dugundji, {\em Topology}, Allyn and Bacon, Inc., Boston, 1966.

\bibitem{EntovPolterovich}
M. Entov, L. Polterovich, {\em Quasi-states and symplectic intersections}, Comment. Math. Helv. {\bf 81} (2006), 75--99.
    
\bibitem{OrjanAlf:CostrPropQlf}
  $\O$. Johansen,  A. Rustad, {\em Construction and Properties of quasi-linear functionals}, 
  Trans. Amer. Math. Soc. {\bf 358}, no. 6 (2006), 2735--2758.  
  
\bibitem{PoltRosenBook}
L. Polterovich, D. Rosen
{\em Function theory on symplectic manifolds}
CRM Monograph series, vol. 34, American Mathematical Society, Providence,
Rhode Island, 2014.
  
\bibitem{Alf:ReprTh}
  A. B. Rustad, {\em Unbounded quasi-integrals},
   Proc. Amer. Math. Soc. {\bf 129}, no. 1 (2000), 165--172.    

\bibitem{Svistula:Signed}
M. G. Svistula, {\em A Signed quasi-measure decomposition},
Vestnik Samara Gos. Univ. Estestvennonauchn. {\bf 62}, no. 3 (2008), 192--207 (Russian).

\bibitem{Svistula:DTM}
M. G. Svistula, {\em Deficient topological measures and functionals generated by them},
Sbornik: Mathematics {\bf 204}, no. 5 (2013), 726--761.

\bibitem{Svistula:Integrals}
M. G. Svistula, {\em On integration with respect to a DT-measure},
Positivity, {\bf 20}, no. 3, (2016), 579--598. 
  
\end{thebibliography}
\end{document}